\documentclass[10pt,reqno]{article}
\usepackage{amssymb, amsmath, amsthm, amsfonts, amscd, epsfig, subcaption}
\usepackage[dvipsnames]{xcolor}
\usepackage[utf8]{inputenc}
\usepackage[english]{babel}
\usepackage{bm}
\usepackage{bbm}
\usepackage{graphicx, epsfig}
\usepackage{geometry,graphicx,pgfplots}
\usepackage{setspace}
\usepackage[export]{adjustbox}
\usepackage[titletoc,toc,title]{appendix}

\usepackage{hyperref}
\usepackage{cleveref}

\usepackage{algorithm}
\usepackage{mathtools}

\usepackage{afterpage}
\usepackage{comment}
\usepackage{cases}


\newtheorem{theorem}{Theorem}[section]

\newtheorem{lemma}[theorem]{Lemma}                                                                                                                                                                                                                                                                             

\newtheorem{definition}{Definition}

\newtheorem{example}{Example}
\newtheorem{remark}{Remark}

\crefname{theorem}{Theorem}{Theorem}

\theoremstyle{definition}

\setlength{\textwidth}{160mm} \setlength{\textheight}{220mm}
\setlength{\oddsidemargin}{0mm} \setlength{\evensidemargin}{0mm} \setlength{\topmargin}{-10mm}

\tolerance=9999
\emergencystretch=10pt
\hyphenpenalty=10000

\def\RR{\mathbb{R}}
\def\NN{\mathbb{N}}
\def\CC{\mathbb{C}}
\def\ZZ{\mathbb{Z}}
\def\R{\mathbb{R}}
\def\L{\mathcal{L}}
\def\C{\mathcal{C}}
\def\bu{\mathbf{u}}
\def\bv{\mathbf{v}}

\def\:={\coloneqq}

\def\G{\mathbf{\Gamma}}

\def\Ss{{\mathcal{S}}}

\def\Dd{{\mathcal{D}}}
\def\vp{{\varphi}}
\def\bvp{{\bm{\varphi}}}

\def\Kk{{\mathcal{K}}}
\def\Scal{{\mathcal{S}}}

\def\bpsi{\bm{\psi}}
\def\l{\lambda}

\def\li{\tilde{\lambda}}
\def\m{\mu}

\def\mi{\tilde{\mu}}

\def\f{\mathbf{f}}
\def\g{\mathbf{g}}

\def\F{\mathbf{F}}
\def\H{\mathbf{H}}
\def\p{\partial}
\def\n{\nabla}

\def\a{\alpha}
\def\b{\beta}

\def\<{\left\langle}
\def\>{\right\rangle}

\def\rmi {\mathrm{i}}

\def\tlambda{\tilde{\lambda}}
\def\tmu{\tilde{\mu}}
\newcommand{\wtF}{{\widetilde{F}}}

\newcommand{\Om}{\Omega}

\newcommand{\pd}[2]{\frac {\p #1}{\p #2}}

\def\beq{\begin{equation}}
\def\eeq{\end{equation}}

\newcommand{\ds}{\displaystyle}

\newcommand{\bmat}[1]{\begin{bmatrix}
#1
\end{bmatrix}}

\renewcommand{\Re}{\mbox{Re}}
\renewcommand{\Im}{\mbox{Im}}
\newcommand{\eps}{\varepsilon}

\numberwithin{equation}{section}
\numberwithin{figure}{section}


\begin{document}
\title{Analytic shape recovery of an elastic inclusion from elastic moment tensors\thanks{\footnotesize
This study was supported by the National Research Foundation of Korea (NRF) grant funded by the Korean government (MSIT) (NRF-2021R1A2C1011804).}}

\author{
Daehee Cho\thanks{Department of Mathematical Sciences, Korea Advanced Institute of Science and Technology, 291 Daehak-ro, Yuseong-gu, Daejeon 34141, Republic of Korea (dhcho2440@kaist.ac.kr, mklim@kaist.ac.kr).}\and
Mikyoung Lim\footnotemark[2]}

\date{\today}
\maketitle

\begin{abstract}
In this paper, we present an analytic non-iterative approach for recovering a planar isotropic elastic inclusion embedded in an unbounded medium from the elastic moment tensors (EMTs), which are coefficients for the multipole expansion of a field perturbation caused by the inclusion. EMTs contain information about the inclusion's material and geometric properties and, as is well known, the inclusion can be approximated by a disk from leading-order EMTs.  We define the complex contracted EMTs as the linear combinations of EMTs where the expansion coefficients are given from complex-valued background polynomial solutions. By using the layer potential technique for the Lam\'{e} system and the theory of conformal mapping, we derive explicit asymptotic formulas in terms of the complex contracted EMTs for the shape of the inclusion, treating the inclusion as a perturbed disk. These formulas lead us to an analytic non-iterative algorithm for elastic inclusion reconstruction using EMTs. We perform numerical experiments to demonstrate the validity and limitations of our proposed method.
\end{abstract}

%
%
\noindent {\footnotesize {\bf Keywords.} {Lam\'{e} system; Elastic moment tensors; Inverse shape problem}}


\section{Introduction}
We consider the inverse problem of reconstructing a planar isotropic elastic inclusion embedded in a homogeneous isotropic background medium from exterior measurements. 
Let the unbounded background medium be isotropic with the Lam\'{e} constants $\lambda$, the bulk modulus, and $\mu$, the shear modulus. Let the isotropic elastic inclusion $\Omega$ have the Lam\'{e} constants $(\tlambda,\tmu)$. Let  $\Omega$ be a bounded simply connected domain in $\RR^2$ with a $C^{1,\gamma}$ boundary for some $\gamma\in(0,1]$.
We assume that $\lambda$, $\mu$, $\tlambda$ and $\tmu$ are known and $(\lambda-\tlambda)^2+(\mu-\tmu)^2\neq 0$. 
We denote by $\mathbb{C}_0$ and $\mathbb{C}_1$ the elastic tensors in the background and $\Omega$, respectively (see \cref{subsec:lame} for details).
Let $\H=(H_1,H_2)$ be a background displacement field, which is a vector-valued solution to $ \nabla\cdot \left(\mathbb{C}_0\widehat{\n}\H\right)=0$ in $\RR^2$.
The displacement $\bu$ in the presence of $\Omega$ satisfies
\begin{align}\label{eqn:main:trans}
\begin{cases}
\ds\n\cdot\left((\mathbb{C}_0\chi_{\R^2\setminus \Omega}+{\mathbb{C}}_1\chi_\Omega)\widehat{\n}\bu\right) =\bm{0}&\mbox{in }\mathbb{R}^2,\\
\ds\bu(x)-\H(x)=O(|x|^{-1})&\mbox{as }|x|\to+\infty,
\end{cases}
\end{align}
where $\widehat{\n}\bu$ represents the strain, that is, $ \widehat{\n}\bu =\frac{1}{2}(\n\bu+(\n\bu)^T)$.
The solution $\bu$ admits the multipole expansion (see \cite{Ammari:2004:RSI} and \cite[Theorem 10.2]{Ammari:2007:PMT}) as follows:
\beq\label{multipole:def}
\bu(x)=\H(x)+\sum_{j=1}^2 \sum_{|\alpha|\geq 1}\sum_{|\beta|\geq 1}
\frac{(-1)^{|\beta|}}{\alpha!\, \beta!}\, \p^\alpha H_j({0})\, \p^\beta\G(x)\,M_{\alpha\beta}^j,\quad|x|\gg1,
\eeq
where $\alpha$, $\beta$ are multi-indices, $M^j_{\alpha\beta}=(m^j_{\alpha\beta 1},m^j_{\alpha \beta 2})$, and $\G$ is the fundamental solution corresponding to the background elastostatic problem. The constant coefficients $m^j_{\alpha\beta 1},m^j_{\alpha \beta 2}$ are called the elastic moment tensors (EMTs).  Containing information about the inclusion's material and geometric properties, EMTs have been used as building blocks for the recovery of elastic inclusions from exterior measurements \cite{Ammari:2002:CAE,Kang:2003:IEI,Lim:2011:RSI}. In this paper, we develop an analytic non-iterative method for recovering a bounded simply connected inclusion $\Omega$ from EMTs.

Iterative methods for identifying an elastic inclusion from EMTs have been investigated using optimization approaches, where objective functions are defined based on the discrepancies between the measurement data of the target and reconstructed inclusions. 
A bounded simply connected inclusion can be analytically approximated as a disk, or a ball in three dimensions, using leading-order EMTs \cite{Ammari:2002:CAE} (see also \cite{Kang:2003:IEI}); this disk can be used as an initial guess.  
With the help of higher-order EMTs, one can further refine the shape details through iterative numerical computations \cite{Lim:2011:RSI}. 
An essential aspect of this optimization approach lies in the shape derivative of the objective function, which represents the derivative of the objective function with respect to the deformation of the inclusion's boundary.
In \cite{Lagha:2016:SPI,Lim:2011:RSI} (see also \cite{Ammari:2010:RSI2}), for the contracted EMTs--the linear combinations of EMTs with coefficients of real-valued polynomial background solutions--the shape derivative was asymptotically expressed as a boundary integral; the integrand is given by using the transmission problem solutions, with various background displacement fields, corresponding to the initial guess.

In the context of the conductivity inclusion problem, optimization approaches have been developed by utilizing the generalized polarization tensors (GPTs) in place of EMTs \cite{Ammari:2012:MIE,Ammari:2012:GPT1}. We refer readers to  \cite{Ammari:2010:CIP1} for the GPTs' shape derivative, which is in the form of boundary integral involving solutions to the transmission problem with conductivity inclusions. Furthermore, in two dimensions, explicit series expressions for the shape derivative of GPTs have been derived by considering the inclusion as a perturbed disk or perturbed ellipse  \cite{Ammari:2010:CIP1,Choi:2021:ASR}. Notably, these explicit formulas do not involve transmission problem solutions and enable the implementation of non-iterative conductivity inclusion recovery. But when it comes to the elastostatic problem, deriving an explicit expression for the shape derivative of EMTs becomes much more complicated due to the vector-valued nature of elastic problems. 
To our knowledge, explicit formulas for the shape derivative of EMTs have not yet been reported in the literature.

A powerful approach for solving plane elastostatic inclusion problems has been to apply the theory of conformal mapping, which transforms the original vector-valued problems into complex scalar-valued problems. For instance, the complex formulation for the elastostatic solution was derived by  Muskhelishvili \cite{Muskhelishvili:1953:SBP} (see also the book by Ammari--Kang \cite{Ammari:2004:RSI}). Movchan--Serkov obtained asymptotic formulas for the effective parameters of dilute composites based on complex analysis theory \cite{Movchan:1997:PSM}. Furthermore, Kang et al. studied the spectral structure for the layer potential operator of an elastostatic inclusion problem \cite{Ando:2018:SPN}. More recently, Mattei--Lim introduced a series solution expansion for the elastic system with a rigid boundary condition \cite{Mattei:2021:EAS}; we also refer readers to \cite{Cherkaev:2022:GSE,Choi:2023:IPP,Choi:2021:ASR,Jung:2021:SEL} for the conductivity inclusion problems.

In the present paper, to deal with the inclusion problem \cref{eqn:main:trans}, we utilize the layer potential technique and the complex formulation for the elastostatic problem. We introduce the concept of complex contracted EMTs that are linear combinations of EMTs where the expansion coefficients are given from complex-valued background polynomial solutions. 
By considering the inclusion as a perturbation of a disk, we derive explicit expressions for the shape derivative of complex contracted EMTs.
Based on these formulas, we propose an analytic non-iterative method to recover a bounded simply connected elastic inclusion from EMTs (refer to \cref{thm:main_result,sec:numerical}).

The rest of this paper is organized as follows. 
In \cref{sec:pre}, we outline the layer potential approach for the elastostatic problem in two dimensions and introduce the concept of EMTs. \Cref{sec:complex} is devoted to the complex formulation for the elastostatic problem. We derive solutions to the transmission problem with a circular inclusion in \cref{sec:trans:disk}. 
We derive the main asymptotic results in \cref{sec:analytic:recovery}  and present our recovery method for an elastic inclusion with numerical examples in \cref{sec:numerical}. 
The paper ends with some concluding remarks in \cref{sec:conclusion}.

\section{Preliminaries}\label{sec:pre}
\subsection{Layer potential technique for the Lam\'{e} system in two dimensions}\label{subsec:lame}
Let $\Omega,\lambda,\mu,\tlambda,\tmu$ be given as in the introduction. 
The elastic tensor $\CC_0$ associated with the Lam\'{e} constants  $(\lambda,\mu)$ admits the tensor expression, employing the Einstein summation convention, that 
\begin{align*}
\CC_0
=C_{kl}^{ij}\, e_i\otimes e_j\otimes e_k\otimes e_l
\quad\mbox{with}\quad 
C_{kl}^{ij}
=\lambda\delta_{ij}\delta_{kl}+\mu(\delta_{ik}\delta_{jl}+\delta_{il}\delta_{jk}),\ i,j,k,l=1,2,
\end{align*}
$\delta_{ij}$ being the Kronecker's delta, ${e}_1=(1,0)$, ${e}_2=(0,1)$ and $\left(e_i\otimes e_j\otimes e_k\otimes e_l\right) \bm{a}=a_{kl}\, e_i\otimes e_j$ for $\bm{a}=(a_{kl})$. We also define $ \bm{b}\otimes \bm{c}= b_{ij}c_{kl}\, e_i\otimes e_j\otimes e_k\otimes e_l $ for two tensors  $\bm{b}= (b_{ij})$ and $\bm{c}= (c_{kl})$. If $\bm{a}$ is symmetric, we have
\begin{align}\label{C:symm:a}
\mathbb{C}_0\bm{a}
=\lambda \operatorname{tr}(\bm{a})\mathbb{I}_2+2\mu\bm{a},
\end{align}
`$\operatorname{tr}$' standing for the trace of a matrix and $\mathbb{I}_2$ being the identity matrix.

Consider the displacement field $\bu$ in the presence of $\Omega$ with the far-field loading $\bu_0$. 
Assuming that there are no body forces, the background solution $\bu_0$  satisfies
\beq\label{strain_stress}
{\bm{\eps}}=\widehat{\n}\bu_0,\quad \bm{\sigma}(\bu_0)=\mathbb{C}_0\bm{\eps},\quad \nabla\cdot\bm{\sigma}(\bu_0)=0,
\eeq
where ${\bm{\eps}}$ and $\bm{\sigma}$ are called the strain and stress fields, respectively. 
This implies that
$\L_{\lambda,\mu}\bu_0=0$ in $\RR^2$ with the differential operator
\begin{align*}
\L_{\lambda,\mu}\bu:=\mu\Delta\bu+(\lambda+\mu)\nabla\nabla\cdot\bu.
\end{align*}
We define the conormal derivative--the traction term--as
\begin{align}\label{def:conormal}
\frac{\partial\bu}{\partial\nu}=\left(\mathbb{C}_0 \widehat{\n}\bu\right)N=\lambda(\nabla\cdot\bu)N+\mu(\nabla\bu+\nabla \bu^T)N\quad\mbox{on }\p \Omega,
\end{align}
where $N=(N_1,N_2)$ stands for the outward unit normal to $\p \Omega$.

We similarly denote by ${\CC}_1$ and $\L_{\tlambda,\tmu}$ the elastic tensor and differential operator associated with the Lam\'{e} constants $(\tlambda,\tmu)$, respectively.

We assume $\mu>0$, $\lambda+\mu>0$ and $\tmu>0$, $\tlambda+\tmu>0$ so that $\L_{\lambda,\mu}$ and $\L_{\tlambda,\tmu}$ are elliptic. We further assume $(\lambda-\tlambda)(\mu-\tmu)\geq0$ and $0<\tlambda,\tmu<\infty$
for the solvability of \cref{eqn:main:trans} (see \cite{Escauriaza:1993:RPS}).

Let $\G=(\Gamma_{ij})_{i,j=1}^2$ be the Kelvin matrix of the fundamental solution to the Lam\'e system $\L_{\lambda,\mu}$, that is,
\begin{gather}\notag
\Gamma_{ij}({x})
=\ds\frac{\a}{2\pi}\delta_{ij}\log|x|-\frac{\b}{2\pi}\frac{x_ix_j}{|x|^2},\quad x\neq 0,\\
\label{def:alpha:beta}
\a=\frac{1}{2}\Big(\frac{1}{\mu}+\frac{1}{2\mu+\lambda}\Big),\quad \b=\frac{1}{2}\Big(\frac{1}{\mu}-\frac{1}{2\mu+\lambda}\Big).
\end{gather}
We also set $\tilde{\a}$ and $\tilde{\b}$ as in \cref{def:alpha:beta} with $\tilde{\l}$, $\tilde{\m}$ in the place of $\lambda$, $\mu$.

The single- and double-layer potentials for the operator $\L_{\lambda,\mu}$ are defined by
 \begin{align*}
\Ss_{\p \Omega}[\bvp](x):=&\int_{\partial \Omega}\G(x-y)\bvp(y)d\sigma(y),\quad x\in\RR^2,\\
\Dd_{\p \Omega}[\bvp](x):=&\int_{\partial \Omega}\frac{\partial}{\partial \nu_y}\G(x-y)\bvp(y)d\sigma(y),\quad x\in\RR^2\setminus\p \Omega
\end{align*}
for $\bvp=(\varphi_1,\varphi_2)\in L^2(\p \Omega)^2$, ${\partial}/{\partial \nu}$ being the conormal derivative. For $i=1,2$, we have
\beq\notag
\begin{aligned}
\left(\Dd_{\p \Omega}[\bvp](x)\right)_i
&=\int_{\partial \Omega} \lambda\frac{\partial \Gamma_{ij}}{\partial y_j}(x-y)\bvp(y)\cdot N(y)
\\&\qquad
+\mu\left(\frac{\partial \Gamma_{ij}}{\partial y_k}+\frac{\partial \Gamma_{ik}}{\partial y_j}\right)(x-y) N_j(y)\varphi_k(y) d\sigma(y).
\end{aligned}
\eeq
The following jump relations hold \cite{Dahlberg:1988:BVP}:
\begin{align*}
\Dd_{\p \Omega}[\bvp]\big|^{\pm}=&\Big(\mp\frac{1}{2}I+\Kk_{\p\Omega}\Big)[\bvp]\quad\mbox{a.e. on }\partial \Omega,
\\
\frac{\partial}{\partial\nu}\Ss_{\p \Omega}[\bvp]\Big|^{\pm}=&\Big(\pm\frac{1}{2}I+\Kk^*_{\p\Omega}\Big)[\bvp]\quad\mbox{a.e. on }\partial \Omega,
\end{align*}
where the so-called Neumann--Poincar\'{e} operators $\Kk_{\p \Omega}$ and $\Kk_{\p \Omega}^*$ are defined by
\begin{align*}
\Kk_{\p \Omega}[\bvp](x) &= \mbox{p.v.}\int_{\partial \Omega}\frac{\partial}{\partial \nu_y}\G(x-y)\bvp(y)\,d\sigma(y),\\
\Kk_{\p \Omega}^*[\bvp](x)&=\mbox{p.v.} \int_{\partial \Omega} \frac{\partial}{\partial\nu_{x}}\G(x-y)\bvp(y)\,d\sigma(y).
\end{align*}
The superscripts $+$ and $-$ indicate the limits from the outside and inside of $\Omega$, respectively, 
and $\mbox{p.v.}$ means the Cauchy principle value.

We denote by $\Psi$ the space of rigid displacement, that is,
\begin{align*}
\Psi&=\left\{\bpsi=(\psi_1,\psi_2): \partial_i\psi_j+\partial_j\psi_i=0,\ 1\leq i,j\leq 2\right\}\\
&=\mbox{span}\left\{(1,0),(0,1),(x_2,-x_1)\right\}
\end{align*}
and set
$
L^2_\Psi(\partial \Omega)=\left\{\bm{f}\in L^2(\partial \Omega)^2:\int_{\partial \Omega}{\bm{f}}\cdot\bpsi d\sigma=0\ \mbox{for all }\bpsi \in \Psi\right\}.
$

Now, we consider the solution $\bu$ to \cref{eqn:main:trans}. Let $\tilde{\Ss}_{\p \Omega}$ be the single-layer potential on $\p \Omega $ corresponding to the Lam\'e constants $(\li,\mi)$.
It is well known (see \cite{Ammari:2004:RSI,Escauriaza:1993:RPS}) that
\begin{align}\label{eqn:sol:expan}
\bu(x)=\begin{cases}
\ds \H(x)+\Ss_{\p \Omega}[{\bvp}](x), &x\in\mathbb{R}^2\setminus\overline{\Omega},\\
\ds\tilde{\Ss}_{\p \Omega}[{{\bpsi}}](x), &x\in \Omega
\end{cases}
\end{align} 
with the unique pair $ (\bpsi,{\bvp}) \in L^2(\p \Omega)^2\times L^2_{\Psi}(\p \Omega)$ satisfying
\begin{align}\label{eqn:density:phi:psi}
\begin{aligned}
\begin{cases}
\ds\tilde{\Ss}_{\p \Omega}[\bpsi]\big|^--\Ss_{\p \Omega}[{\bvp}]\big|^+=\H|_{\p \Omega}\quad\mbox{on }\p\Omega,\\[1mm]
\ds\frac{\p}{\p\tilde{\nu}}\tilde{\Ss}_{\p \Omega}[\bpsi]\Big|^--\frac{\p}{\p\nu}\Ss_{\p \Omega}[{\bvp}]\Big |^+=\frac{\p \H}{\p\nu}\Big |_{\p \Omega}\quad \mbox{on }\p\Omega.
\end{cases}
\end{aligned}
\end{align}

Let $\alpha=(\alpha_1,\alpha_2),\beta=(\beta_1,\beta_2)\in \mathbb{N}^2$ be multi-indices. We set $x^{\alpha}=x_1^{\alpha_1}x_2^{\alpha_2}$. 
We can expand the background solution $\H$ and the fundamental solution $\G$ as
\begin{gather}\label{HG:taylor}
\H(x)=\sum_{j=1}^2\sum_{\alpha\in\NN^2}\frac{1}{\alpha !}\,\p^\alpha H_j({0})\,x^\alpha e_j,\\ \label{HG:taylor2}
\G(x-y)=\sum_{\beta\in \NN^2}\frac{(-1)^{|\beta|}}{\beta!}\,\p^\beta\G(x)\,y^\beta,
\end{gather}
where $y$ is supported in a compact set and $|x|$ is sufficiently large.

\begin{definition}[\cite{Ammari:2002:CAE}]
We define the elastic moment tensors (EMTs) associated with $\Omega$, $(\lambda,\mu)$ and $(\tlambda,\tmu)$ as
\begin{align}\label{def:EMTs}
M^j_{\alpha\beta}=\left(m^j_{\alpha\beta 1},m^j_{\alpha \beta 2}\right)=\int_{\partial \Omega}y^\beta \g^j_\alpha (y)d\sigma(y),
\end{align}
where $(\f^j_\alpha, \g^j_\alpha)\in L^2({\partial \Omega})^2\times L^2(\p \Omega)^2$ is the solution to
\begin{align}\label{trans:prob:basis}
\begin{aligned}
\begin{cases}
\ds\tilde{\Ss}_{\p \Omega}[\f^j_\alpha]\big|^--\Ss_{\p \Omega}[\g^j_\alpha]\big|^+=x^\alpha e_j\big|_{\partial \Omega},\\[1mm]
\ds\frac{\partial}{\partial\tilde{\nu}}\tilde{\Ss}_{\p \Omega}[\f^j_\alpha]\Big |^--\frac{\partial}{\partial \nu}\Ss_{\p \Omega}[\g^j_\alpha]\Big |^+=\frac{\partial(x^\alpha e_j)}{\partial \nu}\Big |_{\partial \Omega}.
\end{cases}
\end{aligned}
\end{align}
\end{definition}

In view of \cref{eqn:sol:expan,HG:taylor,HG:taylor2,def:EMTs}, one can easily derive \cref{multipole:def}.

\subsection{Shape derivative of the contracted EMTs: an asymptotic integral formula}

Let $\Omega$ be a $\eps$-perturbation of a domain $\Omega^{(0)}$ given by
\begin{align*}
\p \Omega=\left\{x+\eps h(x) N(x):\,x\in \p \Omega^{(0)}\right\},
\end{align*}
where $\Om^{(0)}$ is a bounded simply connected domain with $C^{2,\gamma}$ boundary, $h$ is a real-valued $C^{1,\gamma}$ function on $\p \Omega^{(0)}$ for some $\gamma\in(0,1]$, $\eps$ is a small positive parameter, and $N$ is the outward unit normal to $\p \Omega^{(0)}$. We assume that $\Omega^{(0)}$ has the same Lam\'e constants $(\tilde{\l},\tilde{\m})$ as $\Omega$.

Consider two vector-valued polynomials $\H$ and $\F$ satisfying the background elastostatic equation associated with $(\lambda,\mu)$ in $\RR^2$. They admit the Taylor expansions: for some $a_j^\alpha,b_k^\beta\in\RR$, 
\beq\label{HF:poly:expan}
\begin{aligned}
\H(x)&=\sum_{j=1,2}\sum_{\alpha\in\NN^2}a_j^\alpha x^\alpha e_j,\quad
\F(x)=\sum_{k=1,2}\sum_{\beta\in\NN^2} b_k^\beta x^\beta e_k.
\end{aligned}
\eeq

Let $\bu_\eps$ and $\bv_\eps$ be the solutions to \cref{eqn:main:trans} where the background displacement fields are given by $\H$ and $\F$, respectively. We include the subscript $\eps$ to emphasize the dependence of the solutions on the shape perturbation. 
We denote by $\bu_\eps^e$ (resp. $\bv_\eps^e$) and $\bu_\eps^i$ (resp. $\bv_\eps^i$) the restrictions of $\bu_\eps$ (resp. $\bv_\eps$) to the exterior and interior of $\Omega$. 
We also let $\bu_0$ and $\bv_0$ be defined for $\Omega^{(0)}$ instead of $\Omega$. 

By \cref{eqn:sol:expan}, we have
\begin{align}\label{u_0:exprssion}
\bu_0(x)=\begin{cases}
\ds \H(x)+\Ss_{\p \Omega^{(0)} }[{\bvp}_0](x), &x\in\mathbb{R}^2\setminus\overline{\Omega^{(0)}},\\
\ds\tilde{\Ss}_{\p \Omega^{(0)} }[{{\bpsi}_0}](x), &x\in \Omega^{(0)}
\end{cases}
\end{align} 
and
\begin{align}\label{u_eps:exprssion}
\bu_\eps(x)=\begin{cases}
\ds \H(x)+\Ss_{\p \Omega}[{\bvp}_\eps](x), &x\in\mathbb{R}^2\setminus\overline{\Omega},\\
\ds\tilde{\Ss}_{\p \Omega}[{{\bpsi}_\eps}](x), &x\in \Omega
\end{cases}
\end{align} 
with the solutions $(\bpsi_0,{\bvp}_0)$ and $ (\bpsi_\eps,{\bvp}_\eps)$ to \cref{eqn:density:phi:psi} corresponding to $\Omega^{(0)}$ and $\Omega$, respectively. 
Let $\Phi_\eps$ be the transformation from $\p \Om^{(0)}$ onto $\p \Om$ given by $\Phi_\eps(x)=x+\eps h(x)N(x)$, $x\in\p\Om^{(0)}$. By Lemma 3.4 in \cite{Lim:2011:RSI}  (see also \cite[Lemma 5.3]{Lagha:2016:SPI}), it holds that
\beq\label{densities:reglarity}
\left\|\bpsi_\eps\circ\Phi_\eps-\bpsi_0\right\|_{L^2(\p\Om^{(0)})}+\left\|\bvp_\eps\circ\Phi_\eps-\bvp_0\right\|_{L^2(\p\Om^{(0)})}=O(\eps).
\eeq
Hence, there exists a constant $C$ independent of $\eps$ such that
\beq\label{uniform:bound}
\|\bpsi_\eps\|_{L^2(\p\Om)},\ \|\bvp_\eps\|_{L^2(\p\Om)}\leq C.
\eeq

We denote by $B_R$ the ball centered at the origin with the radius $R$. 
Let $R$ be sufficiently large so that $B_R$ contains $\Om^{(0)}$ and $\Om$. 
Since $\Om$ is a $C^{1,\gamma}$ domain, from the regularity result by Li and Nirenberg \cite[Theorem 1.1]{Li:2003:EES}, $\bu_\eps\in C^{1,\gamma'}(\overline{\Om})\cap C^{1,\gamma'}(B_R\setminus \Om)$ for $\gamma'\in(0,\frac{\gamma}{2(\gamma+1)}]$ and there is a constant $M$ depending on $\lambda,\mu,\tlambda,\tmu,\gamma$ and $\Om^{(0)}$ such that 
\beq\notag
\left\|\bu_\eps\right\|_{C^{1,\gamma'}(\overline{\Om})}
+\left\|\bu_\eps\right\|_{C^{1,\gamma'}(B_R\setminus\Om)}\leq M \|\bu_\eps\|_{L^2(B_{R+1})}.
\eeq
The right-hand side of this inequality is uniformly bounded independent of $\eps$ by \cref{u_eps:exprssion,uniform:bound}. Hence, we get
\beq\label{regula:uniform1}
\left\|\bu_\eps\right\|_{C^{1,\gamma'}(\overline{\Om})}
+\left\|\bu_\eps\right\|_{C^{1,\gamma'}(B_R\setminus\Om)}\leq C.
\eeq
In \cite[Lemma 4.1]{Lim:2011:RSI}, it was shown by using \cref{u_0:exprssion,u_eps:exprssion,densities:reglarity,regula:uniform1} that
\beq
\label{regula:uniform2}
\|\nabla (\bu_\eps^e-\bu_0^e)\|_{L^\infty (\p \Om\setminus \Om^{(0)})}
+\|\nabla (\bu_\eps^i-\bu_0^i)\|_{L^\infty (\p \Om\cap\overline{ \Om^{(0)}})}
\leq C\eps^{\frac{\gamma'}{\gamma'+2}}
\eeq
for some constant $C$ independent of $\eps$. 
By the same derivation, it holds that
\beq\label{regular:uniform3}
\|\nabla (\bu_0^e-\bu_\eps^e)\|_{L^\infty (\p \Om^{(0)}\setminus \Om)}
+\|\nabla (\bu_0^i-\bu_\eps^i)\|_{L^\infty (\p \Om^{(0)}\cap\overline{ \Om})}
\leq C\eps^{\frac{\gamma'}{\gamma'+2}}.
\eeq

We consider the contracted EMTs, corresponding to $\Omega$ and $\Omega^{(0)}$, with the expansion coefficients $a^\alpha_j b^\beta_k$. The following relation was shown in the proof of \cite[Theorem 3.1]{Lim:2011:RSI}. For the sake of readers' understanding, we provide the proof.
\begin{lemma}\label{lemma:asym:2Dint}
We have
\beq\notag
\begin{aligned}
&\sum_{\alpha\beta j k} a^\alpha_j b^\beta_k m^j_{\alpha \beta k}(\Omega)-\sum_{\alpha\beta j k} a^\alpha_j b^\beta_k m^j_{\alpha \beta k}(\Omega^{(0)})\\
=&\int_{\Omega\setminus \Omega^{(0)}}({\CC}_1-\CC_0) \widehat{\n} \bu_\eps^i :\widehat{\n} \bv_0^e\, dx
-\int_{\Omega^{(0)}\setminus \Omega}({\CC}_1-\CC_0) \widehat{\n} \bu_\eps^e :\widehat{\n} \bv_0^i\, dx.
\end{aligned}
\eeq
\end{lemma}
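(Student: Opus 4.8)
\noindent The plan is to express each contracted EMT as a boundary pairing of a background polynomial against an exterior single-layer density, to recognize the claimed right-hand side as one volume integral over the symmetric difference $(\Omega\setminus\Omega^{(0)})\cup(\Omega^{(0)}\setminus\Omega)$, and to link the two through a Betti-type reciprocity identity on a large disk. First, by the linearity of \cref{trans:prob:basis} in its data, the combination $\sum_{\alpha,j}a_j^\alpha\g^j_\alpha$ coincides with the exterior density $\bvp_\eps$ of \cref{u_eps:exprssion}; contracting \cref{def:EMTs} with $b_k^\beta$ and recalling $\F=\sum_{\beta,k}b_k^\beta y^\beta e_k$ then yields
\beq\notag
\sum_{\alpha\beta jk}a_j^\alpha b_k^\beta m^j_{\alpha\beta k}(\Omega)=\int_{\p\Omega}\F\cdot\bvp_\eps\,d\sigma,
\eeq
and likewise for $\Omega^{(0)}$ with its density $\bvp_0$. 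Writing $\bv_0=\F+\Ss_{\p\Omega^{(0)}}[\bvp_0^\F]$ for the background-$\F$ solution on $\Omega^{(0)}$ and invoking the symmetry $m^j_{\alpha\beta k}=m^k_{\beta\alpha j}$ of the EMTs, I may also read the $\Omega^{(0)}$ moment as $\int_{\p\Omega^{(0)}}\H\cdot\bvp_0^\F\,d\sigma$.

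Next I exploit that $\Omega$ and $\Omega^{(0)}$ share the interior tensor $\CC_1$, so that, with $\mathbb{C}_\Omega=\CC_0\chi_{\R^2\setminus\Omega}+\CC_1\chi_\Omega$, one has $\mathbb{C}_\Omega-\mathbb{C}_{\Omega^{(0)}}=(\CC_1-\CC_0)(\chi_{\Omega\setminus\Omega^{(0)}}-\chi_{\Omega^{(0)}\setminus\Omega})$. Evaluating $\bu_\eps,\bv_0$ on the appropriate side of each interface, the claimed right-hand side is exactly $\int_{B_R}(\mathbb{C}_\Omega-\mathbb{C}_{\Omega^{(0)}})\widehat{\n}\bu_\eps:\widehat{\n}\bv_0\,dx$ for any $B_R\supset\overline{\Omega\cup\Omega^{(0)}}$. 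Using the symmetry of the elastic tensors to rewrite the integrand as $\mathbb{C}_\Omega\widehat{\n}\bu_\eps:\widehat{\n}\bv_0-\mathbb{C}_{\Omega^{(0)}}\widehat{\n}\bv_0:\widehat{\n}\bu_\eps$, and noting that both stresses are divergence-free with continuous normal traction, the generalized Betti formula collapses each term to $\p B_R$:
\beq\notag
\int_{B_R}(\mathbb{C}_\Omega-\mathbb{C}_{\Omega^{(0)}})\widehat{\n}\bu_\eps:\widehat{\n}\bv_0\,dx=\int_{\p B_R}\Big(\frac{\p\bu_\eps^e}{\p\nu}\cdot\bv_0-\frac{\p\bv_0^e}{\p\nu}\cdot\bu_\eps\Big)\,d\sigma.
\eeq

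It then remains to evaluate this boundary term, and here I substitute $\bu_\eps=\H+\Ss_{\p\Omega}[\bvp_\eps]$ and $\bv_0=\F+\Ss_{\p\Omega^{(0)}}[\bvp_0^\F]$, whose single-layer parts decay like $O(|x|^{-1})$ with gradients $O(|x|^{-2})$ since $\bvp_\eps,\bvp_0^\F\in L^2_\Psi$. This produces four pairings. The background--background pairing vanishes by Green's identity for $\L_{\lambda,\mu}$ on $B_R$; the perturbation--perturbation pairing is $O(R^{-2})$ and drops out as $R\to\infty$; and each mixed pairing, after Green's identity on $B_R\setminus\overline{\Omega}$ (resp.\ $B_R\setminus\overline{\Omega^{(0)}}$), a further interior Green's identity, and the traction jump $\frac{\p}{\p\nu}\Ss_{\p\Omega}[\cdot]|^+-\frac{\p}{\p\nu}\Ss_{\p\Omega}[\cdot]|^-=\mathrm{id}$, reduces to $\int_{\p\Omega}\F\cdot\bvp_\eps\,d\sigma$ and to $-\int_{\p\Omega^{(0)}}\H\cdot\bvp_0^\F\,d\sigma$. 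By the first paragraph these equal $\sum a_j^\alpha b_k^\beta m^j_{\alpha\beta k}(\Omega)$ and $-\sum a_j^\alpha b_k^\beta m^j_{\alpha\beta k}(\Omega^{(0)})$, so summation gives the asserted equality.

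The step I expect to be most delicate is the Betti reduction on $B_R$: one must justify the integration by parts even though the strains $\widehat{\n}\bu_\eps$ and $\widehat{\n}\bv_0$ jump across $\p\Omega$ and $\p\Omega^{(0)}$, and even though the polynomial backgrounds $\H,\F$ grow at infinity. Both difficulties are handled by keeping the growing fields paired inside the bounded disk---where Green's identity holds verbatim---and separating out only the decaying single-layer parts before letting $R\to\infty$, while the continuity of the normal traction (the $H(\mathrm{div})$-regularity of the stresses) legitimizes the interface contributions.
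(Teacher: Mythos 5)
Your argument is correct and follows essentially the same route as the paper: both rest on Betti/Green reciprocity over a large disk, the traction jump of the single-layer potential to identify $\int_{\partial\Omega}\F\cdot\bvp_\eps\,d\sigma$ with the contracted EMT, and the divergence theorem applied to the two transmission solutions to convert the $\partial B_R$ pairing into the volume integral over the symmetric difference. The only cosmetic difference is that you run the chain from the volume integral toward the EMTs and dispose of the $\Omega^{(0)}$ term via the known symmetry $m^j_{\alpha\beta k}=m^k_{\beta\alpha j}$, whereas the paper goes the other way and achieves the same effect through the auxiliary reciprocity identities \cref{eq:BR4,eq:BR5} involving the intermediate solution $\bu_0$.
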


\begin{proof}
We choose a disk $B_R$ centered at the orign with the radius $R$ sufficiently large that $B_R$ contains $\Omega$ and $\Omega^{(0)}$. 
By Green's formula, we have
\begin{align}\label{eq:Green1}
&\int_{\partial \Omega} \Big(\mathbf{F} \cdot \dfrac{\partial \mathbf{H}}{\partial \nu} - \dfrac{\partial \mathbf{F}}{\partial \nu}\cdot \mathbf{H}\Big)d\sigma = 0
\end{align}
and, by applying the formula on $B_{R}\setminus\overline{\Omega}$,
\begin{equation}\label{eq:Green2}
\begin{aligned}
 \int_{\partial \Omega} \Big(\mathbf{F} \cdot \dfrac{\partial \mathbf{u}_\varepsilon }{\partial \nu} - \dfrac{\partial \mathbf{F}}{\partial \nu}\cdot \mathbf{u}_\varepsilon \Big)d\sigma 
&= \int_{\partial B_{R}} \Big( \mathbf{F} \cdot \dfrac{\partial \mathbf{u}_\varepsilon }{\partial \nu} - \dfrac{\partial \mathbf{F}}{\partial \nu}\cdot \mathbf{u}_\varepsilon \Big)d\sigma.
\end{aligned}
\end{equation}
Similarly, it holds that
\begin{equation*}
\begin{aligned}
&\int_{\partial B_R} \Big( (\mathbf{v}_0-\mathbf{F})\cdot \dfrac{\partial (\mathbf{ u}_\varepsilon - \mathbf{u}_0 ) }{\partial \nu}  - \dfrac{\partial (\mathbf{v}_0-\mathbf{F})}{\partial \nu}\cdot (\mathbf{u}_\varepsilon -\mathbf{u}_0)\Big)d\sigma \\
=&\int_{\partial B_{R'}} \Big( (\mathbf{v}_0-\mathbf{F})\cdot \dfrac{\partial (\mathbf{ u}_\varepsilon - \mathbf{u}_0 ) }{\partial \nu}  - \dfrac{\partial (\mathbf{v}_0-\mathbf{F})}{\partial \nu}\cdot (\mathbf{u}_\varepsilon -\mathbf{u}_0)\Big) d\sigma \quad \mbox{for any }R'\geq R.
\end{aligned}
\end{equation*}
Note that $\mathcal{S}_{\partial \Omega}[\bm{f}], \nabla \mathcal{S}_{\partial \Omega}[\bm{f}] =O(|x|^{-1})$ as $|x|\rightarrow\infty$ for $\bm{f}\in L^2_{\Psi}$. By \cref{u_0:exprssion,u_eps:exprssion}, it holds that $(\mathbf{v}_0-\mathbf{F})$, $\nabla(\mathbf{v}_0-\mathbf{F})$, $(\mathbf{u}_\varepsilon-\mathbf{u}_0)$ and $\nabla(\mathbf{u}_\varepsilon-\mathbf{u}_0)$ decay as $|x|^{-1}$. Thus, by taking $R'\rightarrow\infty$, it holds that
\begin{align}\label{eq:BR4}
\int_{\partial B_R} \Big( (\mathbf{v}_0-\mathbf{F})\cdot \dfrac{\partial (\mathbf{ u}_\varepsilon - \mathbf{u}_0 ) }{\partial \nu}  - \dfrac{\partial (\mathbf{v}_0-\mathbf{F})}{\partial \nu}\cdot (\mathbf{u}_\varepsilon -\mathbf{u}_0)\Big)d\sigma =0.
\end{align}
Also, Green's formula and the jump condition for the solution to \cref{eqn:main:trans} imply that
\begin{align} \label{eq:BR5}
\int_{\partial B_R}\Big( \mathbf{v}_0\cdot\dfrac{\partial \mathbf{u}_0}{\partial \nu} - \dfrac{\partial \mathbf{v}_0}{\partial \nu} \cdot \mathbf{u}_0\Big)d\sigma
=0.
\end{align}

By the definition of the EMTs, we derive
\begin{align*}
 \sum_{\alpha\beta j k } a^{\alpha}_j b^{\beta}_k m^{j}_{\alpha\beta k}(\Omega)
 =&\sum_{\alpha\beta j k } a^{\alpha}_j b^{\beta}_k\big( \int_{\partial \Omega} y^{\beta} \mathbf{g}_{\alpha}^{j}(y)\big)\cdot e_k\,d\sigma(y)\\
 =& \int_{\partial \Omega} \mathbf{F} \cdot{\bm{\varphi}}_\eps\,d\sigma\\
 =&\int_{\partial \Omega} \mathbf{F} \cdot \Big(  \dfrac{\partial \mathcal{S}_{\partial \Omega} [{\bm{\varphi}}_\eps]}{\partial \nu}\Big|^+ - \dfrac{\partial \mathcal{S}_{\partial \Omega} [{\bm{\varphi}}_\eps]}{\partial \nu}\Big|^-\Big) \, d\sigma\\
 =& \int_{\partial \Omega}\Big( \mathbf{F} \cdot \dfrac{\partial \mathcal{S}_{\partial \Omega} [{\bm{\varphi}}_\eps]}{\partial \nu}\Big|^+ - \dfrac{\partial \mathbf{F}}{\partial \nu}\cdot \mathcal{S}_{\partial \Omega}[{\bm{\varphi}}_\eps]\Big) d\sigma,
 \end{align*}
 where $\mathbf{g}_\alpha^{j}$ is the solution to \cref{trans:prob:basis} and
the last equality follows by Green's formula for $\mathbf{F}$ and $\mathcal{S}_{\partial\Omega}[{\bm{\varphi}}_\eps]$ on $\Omega$. 
 Thus, by \cref{u_eps:exprssion,eq:Green1,eq:Green2}, we obtain
 \begin{align}\notag
  \sum_{\alpha\beta j k } a^{\alpha}_j b^{\beta}_k m^{j}_{\alpha\beta k}(\Omega)
  =& \int_{\partial \Omega} \Big(\mathbf{F} \cdot \dfrac{\partial \left(\mathbf{H}+\mathcal{S}_{\partial \Omega} [{\bm{\varphi}}_\eps] \right)}{\partial \nu}\Big|^+ - \dfrac{\partial \mathbf{F}}{\partial \nu}\cdot \left(\mathbf{H}+ \mathcal{S}_{\partial \Omega}[{\bm{\varphi}}_\eps]\right)\Big)d\sigma\\
   \label{EMT:bint:Om}
=& \int_{\partial B_R} \Big( \mathbf{F} \cdot \dfrac{\partial \mathbf{u}_\varepsilon }{\partial \nu} - \dfrac{\partial \mathbf{F}}{\partial \nu}\cdot \mathbf{u}_\varepsilon \Big)d\sigma.
\end{align}
The same relation holds for $\Omega^{(0)}$ with $\mathbf{u}_\varepsilon$ replaced by $\mathbf{u}_0$. 
By subtracting the relations \cref{EMT:bint:Om} for $\Omega$ and $\Omega^{(0)}$ and applying \cref{eq:BR4,eq:BR5}, it follows that
\begin{align}\notag
&\sum_{\alpha\beta j k } a^{\alpha}_j b^{\beta}_k m^{j}_{\alpha\beta k}(\Omega) - \sum_{\alpha\beta j k } a^{\alpha}_j b^{\beta}_k m^{j}_{\alpha\beta k}(\Omega^{(0)})\\\notag
=&\int_{\partial B_R} \Big(\mathbf{F}\cdot \dfrac{\partial (\mathbf{u}_{\varepsilon} - \mathbf{u}_0)}{\partial \nu} - \dfrac{\partial \mathbf{F}}{\partial \nu}\cdot (\mathbf{u}_\varepsilon - \mathbf{u}_0)\Big)d\sigma\\ \label{lemma:eqn:main}
=&\int_{\partial B_R} \Big(\mathbf{v}_0\cdot \dfrac{\partial \mathbf{u}_{\varepsilon}}{\partial \nu} - \dfrac{\partial \mathbf{v}_0}{\partial \nu} \cdot\mathbf{u}_\varepsilon \Big)d\sigma.
\end{align}

From the first equation in \cref{eqn:main:trans} and the divergence theorem, we obtain
$$
\begin{gathered}
\int_{B_R}\big(\mathbb{C}_0\chi_{\mathbb{R}^2\setminus \Omega}+\mathbb{C}_1\chi_{\Omega}\big)\widehat{\nabla}\mathbf{u}_{\varepsilon}:\widehat{\nabla}\mathbf{v}_0 \,dx
=
\int_{\partial B_R} \dfrac{\partial \mathbf{u}_{\varepsilon}}{\partial \nu} \cdot \mathbf{v}_0\, d\sigma,\\
\int_{B_R}\big(\mathbb{C}_0\chi_{\mathbb{R}^2\setminus \Omega^{(0)}}+\mathbb{C}_1\chi_{\Omega^{(0)}}\big)\widehat{\nabla}\mathbf{v}_{0}:\widehat{\nabla}\mathbf{u}_\varepsilon \,dx
=
\int_{\partial B_R} \dfrac{\partial \mathbf{v}_{0}}{\partial \nu} \cdot \mathbf{u}_\varepsilon\, d\sigma.
\end{gathered}
$$
By subtracting these two relations and applying \cref{lemma:eqn:main}, one can complete the proof. 
\end{proof}

We are now ready to derive the following asymptotic result. 

\begin{theorem}\label{thm:asymp:interior}
Let $\H$ and $\F$ be background solutions whose components are real-valued polynomials given by \cref{HF:poly:expan}.
Let $\bu_0$ and $\bv_0$ (resp. $\bu_\eps$ and $\bv_\eps$) be the solutions to \cref{eqn:main:trans} with $\Omega^{(0)}$ (resp. $\Om$) in the place of $\Omega$, where the background displacement fields are $\H$ and $\F$, respectively. 
For some positive $\delta>0$, it holds that as $\eps\to 0$,
 \begin{align}\notag
 \begin{aligned}
&\sum_{\alpha\beta j k} a^\alpha_j b^\beta_k m^j_{\alpha \beta k}(\Omega)
-\sum_{\alpha\beta j k} a^\alpha_j b^\beta_k m^j_{\alpha \beta k}(\Omega^{(0)})
\\=& \
 \eps\int_{\p \Omega^{(0)} \cap \,\p (\Omega\setminus \Omega^{(0)}) }  h({\CC}_1-\CC_0) \widehat{\n} \bu_0^i :\widehat{\n} \bv_0^e \,d\sigma(x)\\
&+\eps\int_{\p \Omega^{(0)}\cap\, \p (\Omega^{(0)}\setminus \Omega)} h({\CC}_1-\CC_0) \widehat{\n} \bu_0^e :\widehat{\n} \bv_0^i \,d\sigma(x)+O(\eps^{1+\delta}),
\end{aligned}
\end{align}
where $A:B=\sum_{ij}A_{ij}B_{ij}$ for two matrices $A,B$. 
\end{theorem}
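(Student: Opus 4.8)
The starting point is \cref{lemma:asym:2Dint}, which expresses the left-hand side as the difference of two volume integrals,
\[
I_\eps:=\int_{\Omega\setminus\Omega^{(0)}}(\CC_1-\CC_0)\widehat{\n}\bu_\eps^i:\widehat{\n}\bv_0^e\,dx,\qquad
II_\eps:=\int_{\Omega^{(0)}\setminus\Omega}(\CC_1-\CC_0)\widehat{\n}\bu_\eps^e:\widehat{\n}\bv_0^i\,dx,
\]
taken over the two thin shells squeezed between $\p\Omega$ and $\p\Omega^{(0)}$. Since $\p\Omega=\{x+\eps h(x)N(x):x\in\p\Omega^{(0)}\}$, each shell has thickness $O(\eps)$: the outward shell $\Omega\setminus\Omega^{(0)}$ lies over the portion of $\p\Omega^{(0)}$ where $h>0$, and the inward shell $\Omega^{(0)}\setminus\Omega$ over where $h<0$. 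The plan is to (i) flatten each volume integral onto $\p\Omega^{(0)}$ by a tubular change of variables, which produces the factor $\eps h$, and then (ii) swap the $\eps$-dependent field $\bu_\eps$ for the reference field $\bu_0$ on $\p\Omega^{(0)}$ via the uniform estimate \cref{regular:uniform3}. Note that $\bv_0$ is already the reference field, so only $\bu_\eps$ has to be replaced.

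For step (i) I parametrize the outward shell by $x=\Phi(X,t):=X+tN(X)$ with $X\in\p\Omega^{(0)}$ and $0\le t\le\eps h(X)$. Because $\p\Omega^{(0)}$ is compact and $C^{2,\gamma}$, the map $\Phi$ is a diffeomorphism onto a fixed tubular neighborhood for all small $\eps$, and its Jacobian equals $1+O(t)=1+O(\eps)$ with the correction governed by the bounded curvature of $\p\Omega^{(0)}$. Writing $F_\eps(x):=(\CC_1-\CC_0)\widehat{\n}\bu_\eps^i(x):\widehat{\n}\bv_0^e(x)$, the uniform bound \cref{regula:uniform1} on $\bu_\eps$ together with the (fixed, $\eps$-independent) smoothness of $\bv_0^e$ makes $F_\eps$ uniformly $C^{0,\gamma'}$ on the closure of the shell, so that $F_\eps(\Phi(X,t))=F_\eps(X)+O(t^{\gamma'})$ uniformly in $\eps$. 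Integrating in $t$ and in $X$ gives
\[
I_\eps=\eps\int_{\p\Omega^{(0)}\cap\{h>0\}}h\,(\CC_1-\CC_0)\widehat{\n}\bu_\eps^i:\widehat{\n}\bv_0^e\,d\sigma+O(\eps^{1+\gamma'}),
\]
where the trace $\widehat{\n}\bu_\eps^i$ on $\p\Omega^{(0)}$ is unambiguous: on $\{h>0\}$ the curve $\p\Omega^{(0)}$ lies in the interior of $\Omega$, where $\bu_\eps$ solves a single Lam\'e system and is therefore smooth across $\p\Omega^{(0)}$. The same computation applies to $II_\eps$; here the $t$-integration runs over $[\eps h,0]$ with $h<0$, and the resulting sign flip cancels the explicit minus sign of $II_\eps$ in \cref{lemma:asym:2Dint}, so that one again obtains $+\eps\int_{\{h<0\}}h(\cdots)\,d\sigma$.

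For step (ii), on $\p\Omega^{(0)}\cap\{h>0\}\subset\p\Omega^{(0)}\cap\overline\Omega$ the estimate \cref{regular:uniform3} yields $\|\n(\bu_\eps^i-\bu_0^i)\|_{L^\infty}=O(\eps^{\gamma'/(\gamma'+2)})$; multiplying by the prefactor $\eps$ shows that replacing $\widehat{\n}\bu_\eps^i$ by $\widehat{\n}\bu_0^i$ costs only $O(\eps^{1+\gamma'/(\gamma'+2)})$. The inward shell is treated identically, using the $\bu^e$-part of \cref{regular:uniform3} on $\p\Omega^{(0)}\setminus\Omega$ to pass from $\widehat{\n}\bu_\eps^e$ to $\widehat{\n}\bu_0^e$. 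Recalling that $\p\Omega^{(0)}\cap\{h>0\}=\p\Omega^{(0)}\cap\p(\Omega\setminus\Omega^{(0)})$ and $\p\Omega^{(0)}\cap\{h<0\}=\p\Omega^{(0)}\cap\p(\Omega^{(0)}\setminus\Omega)$ and collecting the two contributions gives the claimed formula with $\delta=\gamma'/(\gamma'+2)>0$.

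The main obstacle is step (i): reducing the $O(\eps)$-thick volume integrals to boundary integrals uniformly in $\eps$ while correctly pinning down the one-sided traces. This rests on the three ingredients already in place, namely the uniform $C^{1,\gamma'}$ control \cref{regula:uniform1} (so the integrand oscillates by only $O(\eps^{\gamma'})$ across the shell), the $C^{2,\gamma}$ regularity of $\Omega^{(0)}$ (so that $\Phi$ is a uniform tubular diffeomorphism with a controlled Jacobian), and the smoothness of $\bu_\eps$ across the non-interface part of $\p\Omega^{(0)}$ (so that the trace appearing after flattening is exactly the one compared to $\bu_0^i$, resp. $\bu_0^e$, in \cref{regular:uniform3}). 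Once step (i) is in place, the substitution of step (ii) is routine.
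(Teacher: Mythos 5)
Your proposal is correct and follows essentially the same route as the paper: it starts from \cref{lemma:asym:2Dint}, flattens the two $O(\eps)$-thick shell integrals onto $\p\Omega^{(0)}$ using normal coordinates and the uniform $C^{1,\gamma'}$ bound \cref{regula:uniform1} to control the variation of the integrand across the shell, and then swaps $\widehat{\n}\bu_\eps$ for $\widehat{\n}\bu_0$ via \cref{regular:uniform3}, arriving at the same exponent $\delta=\gamma'/(\gamma'+2)$. The only cosmetic difference is that you take $t\in[0,\eps h(X)]$ so the factor $\eps h$ comes from the length of the normal segment, whereas the paper parametrizes by $x_t=x_0+th(x_0)N(x_0)$ with $t\in[0,\eps]$ and extracts $|h|$ from the Jacobian; the sign bookkeeping on $\{h<0\}$ is handled identically.
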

\begin{proof}
 The proof follows some lines in the proof of Theorem 2.1 in \cite{Ammari:2010:RSI2}. 
 
Set $x_t=x_0+th(x_0)N(x_0)$ for $x_0\in\p\Om^{(0)}$ and $t\in[0,\eps]$. We get
\beq\label{thm:main:eqn:1}
\begin{aligned}
&\int_{\Omega\setminus \Omega^{(0)}}({\CC}_1-\CC_0) \widehat{\n} \bu_\eps^i :\widehat{\n} \bv_0^e\, dx_0
\\
=&\ \int_0^\eps\int_{\p \Om^{(0)}\cap\{h>0\}} \left(({\CC}_1-\CC_0) \widehat{\n}\bu_\eps^i(x_t) :\widehat{\n} \bv_0^e(x_t)\right)|h(x_0)|\,d\sigma(x_0)\,dt+O(\eps^2) 
\end{aligned}
\eeq
and
\beq\label{thm:main:eqn:2}
\begin{aligned}
&\int_{\Omega^{(0)}\setminus \Omega}({\CC}_1-\CC_0) \widehat{\n} \bu_\eps^e :\widehat{\n} \bv_0^i\, dx_0
\\
=&\ \int_0^\eps\int_{\p \Om^{(0)}\cap\{h<0\}} \left(({\CC}_1-\CC_0) \widehat{\n}\bu_\eps^e(x_t) :\widehat{\n} \bv_0^i(x_t)\right)|h(x_0)|\,d\sigma(x_0)\,dt+O(\eps^2).
\end{aligned}
\eeq
By using the estimates \cref{regula:uniform1,regular:uniform3} for $\bu_\eps$ and $\bv_0$, we derive that, for $x_0\in\p \Om^{(0)}\cap\{h>0\}$, 
\begin{align}\notag
({\CC}_1-\CC_0) \widehat{\n}\bu_\eps^i(x_t) :\widehat{\n} \bv_0^e(x_t)
&=({\CC}_1-\CC_0) \widehat{\n}\bu_\eps^i(x_0) :\widehat{\n} \bv_0^e(x_0)+O(\eps^{\gamma'}) \\ \label{tensor:asymp:1}
&=({\CC}_1-\CC_0) \widehat{\n}\bu_0^i(x_0) :\widehat{\n} \bv_0^e(x_0)+O(\eps^{\frac{\gamma'}{\gamma'+2}}).
\end{align}
Similarly, for $x_0\in \p \Om^{(0)}\cap \{h<0\}$, we get
\begin{align}\label{tensor:asymp:2}
({\CC}_1-\CC_0) \widehat{\n}\bu_\eps^e(x_t) :\widehat{\n} \bv_0^i(x_t)
&=({\CC}_1-\CC_0) \widehat{\n}\bu_0^e(x_0) :\widehat{\n} \bv_0^i(x_0)+O(\eps^{\frac{\gamma'}{\gamma'+2}}).
\end{align}
By \cref{lemma:asym:2Dint,tensor:asymp:1,tensor:asymp:2,thm:main:eqn:1,thm:main:eqn:2}, we prove the theorem. 
\end{proof}

\begin{remark}
Although \cref{thm:asymp:interior} is derived for a single inclusion, it can be extended to multiple separated inclusions. Indeed, the proof of \cref{thm:asymp:interior} mainly relies on the layer potential technique and the regularity result by Li and Nirenberg \cite[Theorem 1.1]{Li:2003:EES}, both of which are applicable to multiple inclusions. Additionally, for a conductivity inclusion problem, an iterative level-set based optimization method has been developed to recover multiple inclusions \cite{Ammari:2013:NOC}. We believe this iterative method can be generalized to elastic inclusion problems. However, the analytic shape recovery formulas presented in \cref{subsec:explicit:shape} are specifically for a perturbed disk and, therefore, assume a single inclusion. 
It is not straightforward to generalize these formulas to multiple inclusions since, in this case, the EMTs simultaneously depend on multiple shape deformation functions.  Nonetheless, exploring such a generalization remains an intriguing direction for future research.

\end{remark}

\section{Complex-variable formulation}\label{sec:complex}
We identify the coordinate vector $x=(x_1,x_2)\in\RR^2$ with the complex variable $z=x_1+\rmi x_2\in\CC$.
Let $\Re\{\cdot\}$ and $\Im\{\cdot\}$ denote the real and imaginary parts of a complex number, respectively, and $(\cdot)_j$ the $j$-th component of a vector in $\RR^2$. 
We  rewrite a vector-valued function $\bu(x)$ as the complex function 
$$u(z):=\big((\bu)_1+\rmi (\bu)_2\big)(x)$$
and, in the same way, define the complex-valued functions $\vp$, $\psi$, $H$ and $F$ corresponding to $\bvp$, $\bpsi$, $\H$ and $\F$, respectively. We also define the complex-valued single-layer potentials for complex-valued density functions $\varphi$, $\psi$ as
\begin{align*}
S_{\p \Omega}[\varphi](z):=&\,\big((\Ss_{\p \Omega}[\bvp])_1+\rmi (\Ss_{\p \Omega}[\bvp])_2\big)(x),\\
{\tilde{S}}_{\p \Omega}[\psi](z):=&\,\big(({\tilde{\Ss}}_{\p \Omega}[\bpsi])_1+\rmi ({\tilde{\Ss}}_{\p \Omega}[\bpsi])_2\big)(x).
\end{align*}

\subsection{Complex contracted EMTs}
One can express the solution to the Lam\'{e} system in terms of complex holomorphic functions as follows. 
\begin{lemma}[\cite{Ammari:2007:PMT,Muskhelishvili:1953:SBP}]\label{Complex_representation_of_solutions}
Let $\Omega$ be a simply connected domain in $ \R^2 $ with Lam\'e constants $(\lambda,\mu)$.
Let $\bv\in H^1(\Omega)^2$ satisfy $\L_{\l,\m}\bv={0}$ in $\Omega$. 
Then there are holomorphic functions $f$ and $g$ in $\Omega$ such that
\begin{align}\label{Complex_representation_of_solutions_1}
\left((\bv)_1+\rmi (\bv)_2\right)(z)={\kappa} f(z)-z\overline{f'(z)}-\overline{g(z)}\quad\mbox{with }\kappa=\frac{\lambda+3\mu}{\lambda+\mu}.
\end{align}
The conormal derivative of $\bv$ satisfies
\begin{align}\label{Complex_representation_of_solutions_2}
\left[\left(\frac{\partial \bv}{\partial \nu}\right)_1+\rmi \left(\frac{\partial \bv}{\partial \nu}\right)_2\right]d\sigma(z)=-2\rmi \mu\,\partial\left[f(z)+z\overline{f'(z)}+\overline{g(z)}\right],
\end{align}
where 
$\partial=\frac{\partial}{\partial x_1}dx_1+\frac{\partial}{\partial x_2}dx_2=\frac{\p}{\p z}dz+\frac{\p}{\p\overline{z}}d\overline{z}$ and $f'(z)=\frac{\p f}{\p z}(z)$. 
\end{lemma}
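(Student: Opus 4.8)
The statement comprises two formulas: the Kolosov--Muskhelishvili representation of the displacement and the companion expression for its conormal derivative. I would set $u=(\bv)_1+\rmi(\bv)_2$ and introduce the dilatation $\theta=\n\cdot\bv$ and the rotation $\omega=\p_1(\bv)_2-\p_2(\bv)_1$. With $\p_z=\frac12(\p_1-\rmi\p_2)$ and $\p_{\overline z}=\frac12(\p_1+\rmi\p_2)$ one has $\p_z u=\frac12(\theta+\rmi\omega)$, so that $\theta=2\Re\{\p_z u\}$ and $\omega=2\Im\{\p_z u\}$. The first task is to extract a single holomorphic object from $\theta$ and $\omega$. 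Writing $\L_{\lambda,\mu}\bv=\bm{0}$ componentwise and using the elementary identities $\Delta(\bv)_1=\p_1\theta-\p_2\omega$ and $\Delta(\bv)_2=\p_2\theta+\p_1\omega$, the two scalar equations collapse to $(\lambda+2\mu)\p_1\theta=\mu\p_2\omega$ and $(\lambda+2\mu)\p_2\theta=-\mu\p_1\omega$; these are exactly the Cauchy--Riemann relations making $(\lambda+2\mu)\theta+\rmi\mu\omega$ holomorphic, so in particular $\theta$ and $\omega$ are harmonic.

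To build $f$, I would define a function $f'$ by prescribing $\Re\{f'\}=\theta/(2(\kappa-1))$ and $\Im\{f'\}=\omega/(2(\kappa+1))$. The algebraic identity $(\lambda+2\mu)(\kappa-1)=\mu(\kappa+1)$, which holds precisely for $\kappa=(\lambda+3\mu)/(\lambda+\mu)$, converts the two relations just derived into the Cauchy--Riemann equations for $f'$, so $f'$ is holomorphic; since $\Omega$ is simply connected, $f'$ admits a holomorphic primitive $f$. By construction $\p_z u=\kappa f'-\overline{f'}$, whence the function $w:=u-\kappa f+z\overline{f'}$ satisfies $\p_z w=0$. Thus $\overline{w}$ is holomorphic, and setting $g:=-\overline{w}$ gives $u=\kappa f-z\overline{f'}-\overline{g}$, which is the first display.

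For the conormal derivative I would first turn the representation into complex stress formulas. Differentiating gives $\p_z u=\kappa f'-\overline{f'}$ and $\p_{\overline z}u=-z\overline{f''}-\overline{g'}$; combining these with $\sigma=\lambda\theta\,\mathbb{I}_2+2\mu\widehat{\n}\bv$ yields the Kolosov identities $\sigma_{11}+\sigma_{22}=4\mu(f'+\overline{f'})$ and $\sigma_{22}-\sigma_{11}+2\rmi\sigma_{12}=4\mu(\overline z f''+g')$. Since the conormal derivative equals $\sigma N$ and $N\,d\sigma=(dx_2,-dx_1)$ along the positively oriented boundary, the complex traction reads $[(\p\bv/\p\nu)_1+\rmi(\p\bv/\p\nu)_2]\,d\sigma=(\sigma_{11}+\rmi\sigma_{12})\,dx_2-(\sigma_{12}+\rmi\sigma_{22})\,dx_1$. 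Expressing $dx_1,dx_2$ through $dz,d\overline z$ and inserting the two Kolosov identities collapses the right-hand side to $-2\rmi\mu[(f'+\overline{f'})\,dz+(\overline z f''+\overline{g'})\,d\overline z]$, which is exactly $-2\rmi\mu\,\p[f+z\overline{f'}+\overline{g}]$. I expect this last step to be the main obstacle: it is where the vector-valued stress must be repackaged into a single complex one-form, requiring both the derivation of the Kolosov formulas and careful bookkeeping of the boundary orientation and of the conversion into $dz,d\overline z$ components.
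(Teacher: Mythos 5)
The paper does not prove this lemma at all: it is quoted verbatim from Muskhelishvili and Ammari--Kang, so there is no in-paper argument to compare against. Your derivation is the standard Kolosov--Muskhelishvili one found in those references, and it checks out: the reduction of $\L_{\lambda,\mu}\bv=0$ to $(\lambda+2\mu)\p_1\theta=\mu\p_2\omega$, $(\lambda+2\mu)\p_2\theta=-\mu\p_1\omega$ is right, the identity $(\lambda+2\mu)(\kappa-1)=\mu(\kappa+1)$ does hold exactly for $\kappa=(\lambda+3\mu)/(\lambda+\mu)$ and makes your $f'$ holomorphic, $\p_z\bigl(u-\kappa f+z\overline{f'}\bigr)=0$ gives $g$, and the two Kolosov stress identities together with $N\,d\sigma=(dx_2,-dx_1)$ reproduce \cref{Complex_representation_of_solutions_2}. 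Two small points. First, in your final display the coefficient of $d\overline z$ should be $z\overline{f''}+\overline{g'}$, not $\overline z f''+\overline{g'}$: what enters there is the complex conjugate of your (correct) identity $\sigma_{22}-\sigma_{11}+2\rmi\sigma_{12}=4\mu(\overline z f''+g')$, and only $z\overline{f''}+\overline{g'}$ matches $\p_{\overline z}\bigl[f+z\overline{f'}+\overline g\bigr]$; as written the line is internally inconsistent, though clearly a slip rather than an error of substance. Second, since $\bv$ is only assumed in $H^1(\Omega)^2$, the quantities $\theta,\omega$ are a priori only in $L^2$ and satisfy the Cauchy--Riemann relations weakly; you should invoke Weyl's lemma (or interior elliptic regularity for the Lam\'e system) to conclude that $(\lambda+2\mu)\theta+\rmi\mu\omega$, and hence $f'$, is genuinely holomorphic, and note that the traction identity is then an identity of smooth one-forms in the interior, interpreted in the trace sense on $\p\Omega$. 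With those two repairs the proof is complete and is essentially the proof the cited sources give.
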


Recall that $\H$ and $\F$ are solutions to $ \L_{\l,\m}\bv={0} $ in $\RR^2$. 
In view of \cref{Complex_representation_of_solutions}, one can expand the complex functions $H(z)$ and $F(z)$ into the basis functions
\begin{align}
\label{H:exp:basis:complex:center0}
\kappa pz^{n}-z\overline{pnz^{n-1}} + \overline{qz^{n}}
\end{align}
with $n\in\mathbb{N}$ and $(p,q)=(0,1),(0,\rmi),(1,0),(\rmi,0)$ using real-valued expansion coefficients, neglecting the constant term. 
In other words, we have the basis functions
\beq \label{def:H:family:center0}
\begin{cases}
\ds h_n^{(1)}(z)= \overline{z^{n}}, \\
 \ds h_n^{(2)}(z)=\overline{\,\rmi z^{n}},\\
\ds h_n^{(3)}(z)=\kappa z^{n}-z\,\overline{nz^{n-1}},\\
\ds h_n^{(4)}(z)=\kappa \rmi z^{n}-z\overline{\,\rmi nz^{n-1}}.
\end{cases}
\eeq
\begin{definition}[Complex contracted EMTs]\label{def:CCEMTs}
Let $n,m\in\mathbb{N}$ and $t,s=1,2,3,4$.
Let the complex expressions of $\mathbf{H}, \mathbf{F}$ be given by $H=h_n^{(t)}, F=h_{m}^{(s)}$. We define
\begin{align}\label{def:Enmts:C}
\mathbb{E}_{nm}^{(t,s)}(\Omega) = \sum_{\alpha\beta j k }a^{\alpha}_j b^{\beta}_k m^{j}_{\alpha \beta k}(\Omega)
\end{align}
where  $a^{\alpha}_j, b^{\beta}_k$ are the coefficients of the Taylor series expansions of $\mathbf{H}, \mathbf{F}$ as in \cref{HF:poly:expan}.
We call $\mathbb{E}_{nm}^{(t,s)}(\Omega)$ the complex contracted EMTs associated with $\Omega$. 
\end{definition}

From the definition of  $m^{j}_{\alpha \beta k}(\Omega)$ in \cref{def:EMTs}, one can easily find that
\begin{align}
\label{contracted_EMT:integral_form}
\mathbb{E}^{(t,s)}_{nm}(\Omega)=\int_{\partial \Omega} \mathbf{F}\cdot \bm{\varphi}\,d\sigma,
\end{align}
where $\mathbf{H},\mathbf{F}$ are given as in \cref{def:CCEMTs} and $\bm{\varphi}$ is the density function corresponding to $\mathbf{H}$ given by \cref{eqn:sol:expan}.

\subsection{Complex integral expression for the single-layer potential}
One can express the complex single-layer potential $S_{\p \Omega}[\varphi] (z)$ as in \cref{Complex_representation_of_solutions_1}. 
It was shown in \cite[Chapter 9.4]{Ammari:2007:PMT} and \cite{Ando:2018:SPN} that in $\Omega$ (or in $\CC\setminus\overline{\Omega}$),
\beq\label{singlelayer:fg}
\begin{aligned}
2 S_{\p \Omega}[\varphi] (z)&=\kappa f(z)-z\overline{f'(z)}-\overline{g(z)},\\
\ds	f(z)&=\b\,\L[\vp](z),\\
g(z)&=-\a\,\L[\overline{\vp}](z)-\b\,\C[\overline{\zeta}\vp](z)+c
\end{aligned}
\eeq
with the complex operators defined by
\begin{align}\notag
\L[\vp](z)&=\frac{1}{2\pi}\int_{\p \Omega}\log(z-\zeta)\vp(\zeta)\,d\sigma(\zeta),\\\notag
\C[\vp](z)&=\frac{1}{2\pi}\int_{\p \Omega}\frac{\vp(\zeta)}{z-\zeta}\,d\sigma(\zeta),\\\notag
\C[\overline{\zeta}\vp](z)&=\frac{1}{2\pi}\int_{\p \Omega}\frac{\vp(\zeta)}{z-\zeta}\,\overline{\zeta}\,d\sigma(\zeta)
\end{align}
and $c=\frac{\beta}{2\pi}{\int_{\partial \Omega}\overline{\varphi(\zeta)}d\sigma(\zeta)}$. 
 Note that $\kappa\beta=\alpha$ and $\L[\vp]'(z):=\frac{\p}{\p z}\L[\vp](z)=\C[\vp](z)$. 
 If $\bm{\varphi}\in L^2_\Psi(\p \Omega)$, then $c=0$. 
 
 We can rewrite \cref{singlelayer:fg} as
\begin{align}\label{asymptotic_general_single_layer}
2 S_{\p \Omega}[\vp](z)=
2\a\,L[\vp](z)-\b z\,\overline{\C[\vp](z)}+\b\,\overline{\C[\overline{\zeta}\vp](z)}-\overline{c}\quad\mbox{in }\CC
\end{align}
by using the single-layer potential for the Laplacian
\begin{align}
\label{def:L}
L[\vp](z)&=\frac{1}{2\pi}\int_{\p \Omega}\ln|z-\zeta|\vp(\zeta)\,d\sigma(\zeta).
\end{align}
\section{Transmission problem for a disk}\label{sec:trans:disk}

We consider the case in which the elastic inclusion is a disk, namely $D$, centered at $a_0\in \CC$ with the radius $\gamma>0$. 
Then the mapping $\Psi$ defined by
\beq\label{Psi:disk}
\Psi(w)=w+a_0,\quad |w|\geq \gamma
\eeq
is a conformal mapping from the exterior of the unit disk centered at $0$ to the exterior of $D$. 
We denote by $F_m$ the Faber polynomials associated with $\Psi$ given by \cref{Psi:disk} (see \cite{Faber:1903:UPE, Jung:2021:SEL}). It holds that $$F_m(z)=(z-a_0)^m,\quad m=0,1,2,\dots.$$
For $z=\Psi(w)\in\p D$ with $w=\gamma e^{\rmi \theta}$,
we define density basis functions
$$\varphi_k(z)=\gamma^{-1}{e^{\rmi k \theta}}\quad \mbox{for } k\in\ZZ.$$

\subsection{Single-layer potential}\label{subsec:single:expan}
The operator $\mathcal{L}$ satisfies that for $m\geq 1$,
\beq
\begin{aligned}
\mathcal{L} \left[ \varphi_{m} \right](z) 
&= \begin{cases}
-\dfrac{1}{m}\gamma^{-m}F_m(z) &\mbox{in }D,\\[2mm]
0&\mbox{in }\CC\setminus\overline{D},
\end{cases}\\
\mathcal{L} \left[ \varphi_{-m} \right](z) 
&= \begin{cases}
0 &\mbox{in }D,\\[2mm]
-\dfrac{1}{m}\gamma^{m}(z-a_0)^{-m}&\mbox{in }\CC\setminus\overline{D}.
\end{cases}
\end{aligned}
\eeq
As shown in \cite{Jung:2021:SEL}, the operator $L$ satisfies that for $m\geq1$, 
\beq\label{L:varphi_m:exp}
\begin{aligned}
L\left[ \varphi_{-m}\right](z)&=\overline{L\left[ \varphi_m\right](z)},\\
L\left[ \varphi_m\right](z)
&=\begin{cases}
\ds -\frac{1}{2m} \gamma^{-m}\,F_m(z),\quad&z\in\overline{D},\\[2mm]
\ds-\frac{1}{2m} \gamma^{m}\,\overline{w^{-m}},\quad&z\in\CC\setminus\overline{D}.
\end{cases}
\end{aligned}
\eeq
For notational convenience, we define for $m\geq0$ that
\begin{align*}
\widetilde{F}_0(z)&=0,\\
\widetilde{F}_m(z)&= \gamma^{-m}(z-a_0)^{m-1}\qquad\qquad\qquad\qquad\mbox{for }z\in D,\  m\neq0,\\
G_{-m}(z)&=\gamma^m\, (z-a_0)^{-m-1}=\gamma^{m}\, w^{-m-1}\qquad\mbox{for }z=\Psi(w)\in\CC\setminus\overline{D}. 
\end{align*}
From Lemma 3.1 and Lemma 3.2 in \cite{Mattei:2021:EAS}, it holds that for $m\geq 1$,
\begin{align}\notag
\mathcal{C}\left[\varphi_{m}\right](z)&=
\begin{cases}
\ds -\wtF_{m}(z)\quad&\mbox{in }D,\\[2mm]
\ds 0\quad&\mbox{in }\CC\setminus\overline{D},
\end{cases}\\\label{Cvarphim:plus:minus}
\mathcal{C}\left[\varphi_{-m}\right](z)&=
\begin{cases}
\ds 0\quad&\mbox{in } D,\\[2mm]
\ds G_{-m}(z)\quad&\mbox{in }\CC\setminus\overline{D}.
\end{cases}
\end{align}
Note that 
$$\mathcal{C}\left[\overline{\zeta}\,\varphi_{k}\right]=
\gamma\mathcal{C}\left[ \varphi_{k-1}\right]+\overline{a_0}\,\mathcal{C}\left[\varphi_k\right]\quad\mbox{for all }k\in\ZZ.$$

\subsubsection{Exterior expansion}
Let $\bvp\in L^2_\Psi(\p D)$ be given by
\beq\label{varphi:expan}
\vp(z)=\sum_{k\in\ZZ}c_k \varphi_k(\theta)\quad\mbox{with } c_k\in\CC,\ c_0=0.
\eeq
We have from \cref{L:varphi_m:exp,Cvarphim:plus:minus} that for $z=\Psi(w)\in\CC\setminus{D}$,
\begin{align}
2 L \left[ \varphi \right] (z)
&= -\sum \left(\dfrac{c_{-m}}{m}\gamma^{m}(z-a_0)^{-m} + \dfrac{c_m}{m}\gamma^{m}\overline{(z-a_0)^{-m}}\right),
\\ 
\label{C:phi:ext}
\mathcal{C}[\varphi](z)
&=\sum c_{-m}\,G_{-m}(z)
=\sum c_{-m+1}\,\gamma^{m-1}\, (z-a_0)^{-m},\\\notag
%
\mathcal{C}\left[\overline{\zeta}\,\varphi\right](z)
&= \gamma c_1 G_0(z)+\sum \gamma c_{-m}G_{-m-1}(z)+  \overline{a_0}\,\mathcal{C}[\varphi](z) \\\label{C:zera:ext}
&=\sum\left( c_{-m+2}\gamma^m + \overline{a_0}\, c_{-m+1}\gamma^{m-1}  \right)(z-a_0)^{-m},
\end{align}
where $\sum$ means $\sum_{m=1}^\infty$.

We express $2S_{\p D}[\vp](z)$ in terms of $z-a_0$ by using \cref{asymptotic_general_single_layer} that
\beq\label{Scal:ext}
\begin{aligned}
2S_{\p D}[\vp](z)
=&-\a\sum\left(\frac{c_{-m}}{m}\,\gamma^{m}\,(z-a_0)^{-m}+\frac{c_m}{m}\,\gamma^{m}\,\overline{(z-a_0)^{-m}}\right)\\
&-\beta z\sum\overline{c_{-m+1}}\, \gamma^{m-1}\,\overline{(z-a_0)^{-m}}\\
&+\beta\sum\left( \overline{c_{-m+2}}\,\gamma^m +{a_0}\, \overline{c_{-m+1}}\gamma^{m-1}  \right)\overline{(z-a_0)^{-m}}
\end{aligned}
\eeq
and, from the continuity of $\Scal_{\p D}$ to the boundary of $D$,
\begin{align}\label{Scal:pD:plus}
&2S_{\p D}[\vp]\Big|^+_{\p D}(z)
=
\beta\,\overline{c_1}\gamma\varphi_1-\a \sum  \frac{c_{-m}}{m}\gamma\varphi_{-m}
-\a\sum\frac{c_m}{m}\gamma\varphi_m.
\end{align}

Using $z=\Psi(w)=w+a_0\in\p D$ with $w=\gamma e^{\rmi \theta}$on $\p D$, one can easily find that
\beq\label{bound:relation}
dz=\gamma \rmi e^{\rmi \theta}d\theta=\rmi wd\theta,\quad d\overline{z}=-\gamma \rmi e^{-\rmi \theta}\,d\theta=-\rmi\overline{w}\, d\theta \quad\mbox{on }\p D.
\eeq 
By \cref{Complex_representation_of_solutions_2,singlelayer:fg}, we have
\begin{align}\notag
&\big(\left(\p_\nu\Scal_{\p D}[\bvp]\right)_1+\rmi\left(\p_\nu\Scal_{\p D}[\bvp]\right)_2\big)\Big|^+_{\p D}(z)\,d\sigma(z)\\\notag
=&-\rmi\b\mu\left[\C[\vp](z)+\overline{\C[\vp](z)}\right]dz
+\rmi\mu\left[\a\,\overline{\C[\overline{\vp}](z)}-\b\left(z\,\overline{\C[\vp]'(z)}-\overline{\C[\overline{\zeta}\vp]'(z)}\right)\right]d\overline{z}
\\ \label{Scal:dnu:ext}
=&\,\b \mu w\left[\C[\vp](z)+\overline{\C[\vp](z)}\right]d\theta
+\mu\overline{w}\left[\a\,\overline{\C[\overline{\vp}](z)}-\b\left(z\,\overline{\C[\vp]'(z)}-\overline{\C[\overline{\zeta}\vp]'(z)}\right)\right]d\theta.
\end{align}
From \cref{C:phi:int,C:zeta:phi:int}, we obtain on $\p D$ that
\begin{align*}
\overline{w}\,\overline{\C[\overline{{\vp}}](z)}
&=\sum c_m \gamma \varphi_m,\\
w\left(\C[\vp](z)+\overline{\C[\vp](z)}\right)
&=-\overline{c_1} \gamma \varphi_1 + \sum c_{-m} \gamma \varphi_{-m} + \sum \overline{c_{-m+2}} \gamma \varphi_{m},\\
\overline{w}\left(z\,\overline{\C[{\vp}]'(z)}-\overline{\C[\overline{\zeta}{\vp}]'(z)}\right)
&=\sum\overline{c_{-m+2}} \gamma \varphi_m.
\end{align*}
It follows that
\begin{align}\label{nuScal:pD:plus}
\begin{aligned}
&\big(\left(\p_\nu\Scal_{\p D}[\bm{\vp}]\right)_1+\rmi\left(\p_\nu\Scal_{\p D}[\bm{\vp}]\right)_2\big)\Big|^+_{\p D}(z)\,d\sigma(z)
\\
=&\,\mu\gamma \bigg[-\beta\, \overline{c_1}\varphi_1
 +\beta\sum c_{-m}\varphi_{-m} +\alpha\sum c_m\varphi_m \bigg]d\theta.
\end{aligned}
\end{align}

\subsubsection{Interior expansion}

Let $\bpsi\in L^2(\p D)$ given by
\beq\label{tvarphi:expan}
\psi(z)=\sum_{k\in\ZZ}d_k \varphi_k(\theta)\quad\mbox{with } d_k\in\CC.
\eeq
We have from \cref{L:varphi_m:exp,Cvarphim:plus:minus} that for $z\in D$,
\begin{align}
2L\left[ \psi \right](z) 
&= - \sum \left( \dfrac{d_m}{m}\gamma^{-m}F_m(z) + \dfrac{d_{-m}}{m}\gamma^{-m}\overline{F_m(z)} \right),
\\
\label{C:phi:int}
\mathcal{C}[\psi](z)
&=-\sum d_m  \widetilde{F}_m(z)
=-d_1\gamma^{-1}-\sum d_{m+1}\,\gamma^{-m-1}\, (z-a_0)^m,\\ \notag
\mathcal{C}\left[\overline{\zeta}\,\psi\right](z)
&=\sum d_m\left(-\gamma\widetilde{F}_{m-1}(z)-\overline{a_0}\,\widetilde{F}_m(z)\right)\\\label{C:zeta:phi:int}
&= -d_1\,\overline{a_0}\gamma^{-1}-d_2
-\sum\left(d_{m+2}\,\gamma^{-m}+d_{m+1}\,\overline{a_0}\,\gamma^{-m-1}   \right)(z-a_0)^m,
\end{align}
where $\sum$ means $\sum_{m=1}^\infty$.

One can easily derive by using \cref{asymptotic_general_single_layer,L:varphi_m:exp} that
\beq\label{Scal:int:expan}
\begin{aligned}
2\tilde{S}_{\p D}[\psi](z)
=&-\tilde{\a}\sum\Big(\frac{d_m}{m}\gamma^{-m}(z-a_0)^m+\frac{d_{-m}}{m}\gamma^{-m}\,\overline{(z-a_0)^m}\Big)\\
&+\tilde{\b}\left((z-a_0)+a_0\right)
\Big[\,\overline{d_1}\gamma^{-1}+\sum \overline{d_{m+1}}\,\gamma^{-m-1}\,\overline{(z-a_0)^m}\,\Big]\\
&-\tilde{\b} \,\overline{d_1}\,{a_0}\gamma^{-1}-\tilde{\b}\,\overline{d_2}
-\tilde{\b}\sum\left(\overline{d_{m+2}}\,\gamma^{-m}+\overline{d_{m+1}}\,{a_0}\,\gamma^{-m-1}   \right)\overline{(z-a_0)^m}
\\
&-\tilde{\b}d_0
\end{aligned}
\eeq
and, thus,
\begin{align}\label{tildeS:pD:int}
2\tilde{S}_{\p D}[\psi]\Big|^-_{\p D}(z)
=&-\tilde{\b}d_0 + \tilde{\b}\, \overline{d_1} \gamma \varphi_1
-\tilde{\a}\sum\frac{d_m}{m} \gamma \varphi_m
-\tilde{\a}\sum\frac{d_{-m}}{m} \gamma \varphi_{-m}.
\end{align}

Similar to \cref{Scal:dnu:ext}, we have
\begin{align*}
&\left(\big(\p_{\tilde{\nu}}{\tilde{\Scal}}_{\p D}[\bpsi]\big)_1+\rmi\big(\p_{\tilde{\nu}}{\tilde{\Scal}}_{\p D}[\bpsi]\big)_2\right)\Big|^-_{\p D}(z)\,d\sigma(z)\\
=&\,\tilde{\b}\tilde{\mu}w\left[
\C[\psi](z)+\overline{\C[\psi](z)}
\right]d\theta
+\tilde{\mu}\overline{w}\left[
\tilde{\a}\,\overline{\C[\overline{\psi}](z)}
-\tilde{\b}\left(
z\,\overline{\C[\psi]'(z)}-\overline{\C[\overline{\zeta}\psi]'(z)}
\right) 
\right] d\theta.
\end{align*}

From \cref{C:phi:int,C:zeta:phi:int}, we obtain on $\p D$ that
\begin{align*}
\overline{w}\,\overline{\C[\overline{\psi}](z)}
&=-\sum d_{-m} \gamma\varphi_{-m},\\
w\left(
\C[\psi](z)+\overline{\C[\psi](z)}
\right)
&=-\overline{d_2} - \overline{d_1}\,\gamma\varphi_1 -\sum \overline{d_{m+2}}\,\gamma\varphi_{-m}-\sum d_m \gamma\varphi_m ,\\
\overline{w}\left(z\,\overline{\C[\psi]'(z)}-\overline{\C[\overline{\zeta}\psi]'(z)}\right)
&=-\overline{d_2}-\sum\overline{d_{m+2}}\,\gamma\varphi_{-m}.
\end{align*}
It follows that
\begin{align}\label{conomal:int}
\begin{aligned}
&\left(\big(\p_{\tilde{\nu}}{\tilde{\Scal}}_{\p D}[\bpsi]\big)_1+\rmi\big(\p_{\tilde{\nu}}{\tilde{\Scal}}_{\p D}[\bpsi]\big)_2\right)\Big|^-_{\p D}(z)\,d\sigma(z)\\
=&-\tilde{\mu}\gamma 
\bigg[\,\tilde{\b}\,\overline{d_1}\,\varphi_1+\tilde{\a}\sum d_{-m}\varphi_{-m}  +\tilde{\b}\sum d_m \varphi_m  \,   \bigg]d\theta.
\end{aligned}
\end{align}

\subsection{Series solutions for the transmission problem}\label{subsec:trans:disk}
We find the solution $\bu$ to \cref{eqn:main:trans} for $\H$ given by \cref{H:exp:basis:complex:center0} with $p=0$, that is,
\beq\label{H:p0}
H(z)= \overline{q(z-a_0)^n}\quad\mbox{for }n\in\NN.
\eeq
We denote by $(\bpsi,\bm{\vp}) \in L^2(\p D)^2\times L^2_{\Psi}(\p D)^2$ the solution to \cref{eqn:density:phi:psi} and set $(\psi,\vp)$ to be the corresponding complex functions. 
From \cref{eqn:sol:expan}, we have
\begin{align}\notag
u(z)=\begin{cases}
\ds H(z)+S_{\p D}[\vp](z), &z\in\CC\setminus\overline{D},\\
\ds \tilde{S}_{\p D}[\psi](z), &z\in D,
\end{cases}
\end{align} 
where the transmission conditions in \cref{eqn:density:phi:psi} can be written in complex form as 
\beq\label{H:S:tS:u}
\begin{gathered}
u\big|^+_{\p D}=u\big|^-_{\p D},\\
(\partial_\nu \bm{u})_1+\rmi(\partial_\nu \bm{u})_2\big|^+_{\p D}
=(\partial_{\tilde{\nu}} \bm{u})_1+\rmi (\partial_{\tilde{\nu}} \bm{u})_2\big|^-_{\p D}
\end{gathered}
\eeq
with 
\beq\label{H:S:tS}
\begin{aligned}
(\partial_\nu \bm{u})_1+\rmi (\partial_\nu \bm{u})_2\big|^+_{\p D}&=(\partial_\nu \H)_1+\rmi (\partial_\nu \H)_2
+\left(\p_\nu\Scal_{\p D}[\bm{\vp}]\right)_1+\rmi \left(\p_\nu\Scal_{\p D}[\bm{\vp}]\right)_2\big|_{\p D}^+,\\
(\partial_{\tilde{\nu}} \bm{u})_1+\rmi (\partial_{\tilde{\nu}} \bm{u})_2\big|^-_{\p D}
&=\big(\p_{\tilde{\nu}}\tilde{\Scal}_{\p D}[\bpsi]\big)_1+\rmi \big(\p_{\tilde{\nu}}\tilde{\Scal}_{\p D}[\bpsi]\big)_2\big|^-_{\p D}.
\end{aligned}
\eeq

From \cref{Complex_representation_of_solutions_2,bound:relation}, it holds that 
\begin{gather}\label{H:pq:pD}
H=\overline{q}\,\gamma^{n+1} \varphi_{-n},\\
\left((\partial_\nu \H)_1+\rmi (\partial_\nu \H)_2\right) d\sigma(z)
= 2\mu \,\overline{q}\,n\gamma^{n+1} \varphi_{-n} d\theta\quad\mbox{on }\p D.
\end{gather}
Since $\tilde{\vp}$ and $\vp$ admit the series expansions \cref{tvarphi:expan,varphi:expan}, we can find the coefficients $d_k$ and $c_k$ in the expansions by using \cref{H:S:tS:u,H:S:tS} as follows:
\beq \label{cndn:ngeq3:mn}
\begin{gathered}
\ds\tilde{\varphi}=d_{-n}\varphi_{-n},\quad
\ds{\varphi}=c_{-n}\varphi_{-n},\\
\bmat{c_{-n}\\d_{-n}} 
= \overline{q}\,n\gamma^{n}\,\frac{2}{\tilde{\alpha}(\tilde{\mu}{\a}+\mu\b)}
\bmat{\tilde{\mu}\tilde{\a} - \mu\tilde{\a}\\ -1}.
\end{gathered}
\eeq
From \cref{Scal:ext,Scal:int:expan}, it holds that
\begin{align}\label{Scal:int:p0}
2{\tilde{S}}_{\p D}[\psi](z)
=&\, -\tilde{\a}\,\frac{d_{-n}}{n}\gamma^{-n}\,\overline{(z-a_0)^n}\quad\mbox{on }D,\\ \notag
2{S}_{\p D}[{\vp}](z)
=&\,-\alpha\frac{c_{-n}}{n}\gamma^n \,{(z-a_0)^{-n}}\\ \label{Scal:ext:p0}
&-\beta \,\overline{c_{-n}} \left((z-a_0)\gamma^n\,\overline{(z-a_0)^{-n-1}}
-\gamma^{n+2}\,\overline{(z-a_0)^{-n-2}}\right)\quad\mbox{on }\CC\setminus{D}.
\end{align}

\section{Analytic shape recovery}\label{sec:analytic:recovery}
We derive explicit formulas that reconstruct an elastic inclusion $\Omega$ from the elastic moment tensors. We regard $\Omega$ as a perturbation of a disk, where the disk can be obtained by using leading-order EMTs (\cref{subsec:approx:disk}). We then introduce the modified EMTs using the disk (\cref{subsec:modiEMTs})  and apply the asymptotic integral expression in \cref{thm:asymp:interior} (\cref{subsec:explicit:shape}).

\subsection{Approximation of the inclusion by a disk}\label{subsec:approx:disk}
We find a disk, namely $D$, that admits the same EMTs of leading-orders, more precisely $\mathbb{E}_{11}^{(1,1)}$, $\mathbb{E}_{12}^{(1,1)}$, and $\mathbb{E}_{12}^{(1,2)}$, to $\Omega$.  
We choose the center $a_0$ and the radius $\gamma$ of $D$ as follows.

Set  $$H(z)=h_1^{(1)}(z)=\overline{z}=\overline{z-a_0}+\overline{a_0}.$$ The density function $\varphi$ corresponding to the transmission problem \cref{eqn:density:phi:psi} with $H(z)=h_1^{(1)}$ is the same as $\varphi$ given by  \cref{cndn:ngeq3:mn} with $n=1$ and $q=1$. In other words,
\beq\label{def:M0}
\varphi=M_0 \gamma\varphi_{-1} \quad \mbox{with }M_0=\dfrac{2(\tilde{\mu}-\mu)}{\tilde{\mu}\alpha + \mu\beta}.
\eeq
We now take $F=h_1^{(1)}, h_2^{(1)}, h_2^{(2)}$ in \cref{def:H:family:center0}, that is, $F(z)=\overline{z}, \overline{z^2},\overline{\,\rmi z^2}$. This admits the Fourier series expansion $$F(z)=\gamma^2\varphi_{-1}+\overline{a_0}, \ \gamma^3\varphi_{-2}+2\overline{a_0}\gamma^2\varphi_{-1}+\overline{a_0}^2, \ -\rmi\left(\gamma^3\varphi_{-2}+2\overline{a_0}\gamma^2\varphi_{-1}+\overline{a_0}^2\right)\quad\mbox{on }\p D.$$
One can easily find by using \cref{contracted_EMT:integral_form} that
\begin{align}\label{EMTs:disk:D}
\begin{aligned}
\mathbb{E}^{(1,1)}_{11}(D)&=2\pi \gamma^2 M_0,
\\
\mathbb{E}^{(1,1)}_{12}(D)&= 2\pi\gamma^2 M_0 \left(a_0+\overline{a_0}\right),
\\
\mathbb{E}^{(1,2)}_{12}(D)&=2\pi\gamma^2 M_0\rmi \left(a_0-\overline{a_0}\right).
\end{aligned}
\end{align}
We arrive at the expression for $a_0$ and $\gamma$:
\beq\label{app_disk}
\begin{aligned}
\gamma&=\frac{1}{\sqrt{2\pi M_0}}\sqrt{\mathbb{E}^{(1,1)}_{11}(D)},\\
a_0&=\frac{1}{4\pi\gamma^2 M_0}\left( \mathbb{E}^{(1,1)}_{12}(D)-\rmi\, \mathbb{E}^{(1,2)}_{12}(D)\right).
\end{aligned}
\eeq

\subsection{Modified EMTs using the approximated disk $D$}\label{subsec:modiEMTs}
Let $D$ be the disk where the center $a_0$ and radius $\gamma$ are determined by \cref{app_disk}. 
As before, we set $H(z)=H_1+\rmi H_2$ for the background solution $\H$ to the problem $\L_{\l,\m}\bv=\bm{0}$. 
By replacing $z$ with $z-{a_0}$ in \cref{def:H:family:center0}, we obtain modified basis functions for $H(z)$:
\beq\label{def:H:family:a0}
\begin{cases}
\ds{\widetilde{h}}_n^{(1)}(z)= \overline{(z-a_0)^{n}}, \\[.5mm]
\ds {\widetilde{h}}_n^{(2)}(z)=\overline{\,\rmi (z-a_0)^{n}},\\[.5mm]
\ds{\widetilde{h}}_n^{(3)}(z)=\kappa (z-a_0)^{n}-(z-a_0)\,\overline{n(z-a_0)^{n-1}},\\[.5mm]
\ds{\widetilde{h}}_n^{(4)}(z)=\kappa \rmi (z-a_0)^{n}-(z-a_0)\overline{\,\rmi n(z-a_0)^{n-1}}.
\end{cases}
\eeq

\begin{definition} [Modified EMTs using the approximated disk $D$]
Fix $n,m\in\NN$ and $t,s\in\{1,2,3,4\}$. Let $\mathbf{H},\mathbf{F}$ be given by $H={\widetilde{h}}_n^{(t)}(z), F={\widetilde{h}}_{m}^{(s)}(z)$. Set $a^{\alpha}_j, b^{\beta}_k$ to be the coefficients of the Taylor series expansions of $\mathbf{H}, \mathbf{F}$ as in \cref{HF:poly:expan}.
We define
\beq\label{def:Enmts}
{\widetilde{\mathbb{E}}}_{nm}^{(t,s)}(\Omega):=\sum_{\alpha\beta j k}a^\alpha_j b^\beta_k m^j_{\alpha \beta k}(\Omega).
\eeq
\end{definition}

One can easily find that 
\beq\label{modiEMT:int}
{\widetilde{\mathbb{E}}}^{(t,s)}_{nm}(\Omega)
 =\int_{\partial D}\Re\left\{F\,\overline{\varphi}\right\}d\sigma\\
 =\int_{\partial D}\frac{1}{2}\left(F\,\overline{\varphi}+\overline{F}{\varphi}   \right)d\sigma,
 \eeq
where $F(z)={\widetilde{h}}_m^{(s)}(z)$, and let $\varphi$ be the density function corresponding to \cref{eqn:density:phi:psi} with $H(z)={\widetilde{h}}_n^{(t)}(z)$ and the domain $\Omega$. 
Set $t,s=1,2$. It then holds that
\begin{align}\label{def:cnk}
\begin{aligned}
{\widetilde{h}}_n^{(t)}(z)=\mbox{const.}+ \sum_{k=1}^{n}c_{nk}\,h_k^{(t)}(z),\quad {\varphi} = \sum_{k=1}^{n}c_{nk}\, {\varphi}_{k}^{(t)}
\quad\mbox{with }
c_{nk} = \left(-\overline{a_0}\right)^{n-k}\binom{n}{k},
\end{aligned}
\end{align}
where $h_k^{(t)}(z)$ is defined by \cref{def:H:family:center0} and $\varphi_k^{(t)}$ is the density function corresponding to \cref{eqn:density:phi:psi} with $H(z)=h_k^{(t)}(z)$.
Note that ${{h}}_k^{(2)}=-\rmi \,{{h}}_k^{(1)}$.
If follows from \cref{contracted_EMT:integral_form} that for $t,s=1,2$, 
\begin{align}\label{modiEMT:EMT}
\begin{aligned}
\widetilde{\mathbb{E}}^{(t,s)}_{nm}(\Omega)
=\sum_{l=1}^m\sum_{k=1}^n& \Bigg(\Re\left\{ c_{ml}\overline{c_{nk}}\right\}\int_{\p D} \Re\left\{h_l^{(s)}\overline{\varphi_k^{(t)}}\right\}d\sigma
\\&\ + \Im\left\{ c_{ml}\overline{c_{nk}}\right\}\int_{\p D}\Re\left\{\rmi\, h_l^{(s)}\overline{\varphi_k^{(t)}}\right\}d\sigma\Bigg)
\\ 
=\sum_{l=1}^m\sum_{k=1}^n& \left(\Re\left\{ c_{ml}\overline{c_{nk}}\right\} \mathbb{E}_{kl}^{(t,s)}(\Omega)
+\Im\left\{ c_{nk}\overline{c_{ml}}\right\}I_{kl}^{(t,s)}\right)
\end{aligned}
\end{align}
with
$$
I_{kl}^{(t,s)}=
\begin{cases}
-{\mathbb{E}}_{kl}^{(t,2)}(\Omega)&\mbox{ if }s=1,\\ 
{\mathbb{E}}_{kl}^{(t,1)}(\Omega)&\mbox{ if }s=2.
\end{cases}
$$

The modified EMTs of $D$ admit simple expressions as follows. 
Set $H(z)={\widetilde{h}}_n^{(t)}$ and $F(z)= {\widetilde{h}}_m^{(s)}$. On $\p D$, by \cref{cndn:ngeq3:mn}, $\varphi=M_0 n \gamma^n\varphi_{-n}$ for $t=1$ and $\varphi=-\rmi M_0 n \gamma^n\varphi_{-n}$ for $t=2$;  $F(z)=\gamma^{m+1} \varphi_{-m}$ for $s=1$ and $F(z)=-\rmi \gamma^{m+1} \varphi_{-m}$ for $s=2$, where $M_0$ is given by \cref{def:M0}.
By \cref{modiEMT:int}, it holds that
\beq\label{modiEMT:D}
\begin{aligned}
\ds{\widetilde{\mathbb{E}}}^{(1,2)}_{nm}(D)&={\widetilde{\mathbb{E}}}^{(2,1)}_{nm}(D)=0,\\
\ds{\widetilde{\mathbb{E}}}^{(1,1)}_{nm}(D)&={\widetilde{\mathbb{E}}}^{(2,2)}_{nm}(D)=2\pi M_0 n \delta_{nm} \gamma^{n+m}\quad\mbox{for any }n,m\in\NN.
\end{aligned}
\eeq

\subsection{Explicit formulas for the shape perturbation}\label{subsec:explicit:shape}
We regard $\Omega$ as a perturbation of the disk $D=B(a_0,\gamma)$, that is,
\beq\label{Om:disk_peturb}
\p\Omega=\left\{z+\eps h_{\p D}(z) N_{\p D}(z):\, z\in \p D\right\}
\eeq
for some real-valued $C^{1,\gamma}$ function $h_{\p D}$ for some $\gamma\in(0,1]$.
We set $h(\theta)$ by $h_{\p D}(a_0+\gamma e^{\rmi\theta})=\gamma h(\theta)$, $\theta\in[0,2\pi)$.
Being a real-valued function, $h(\theta)$ admits the Fourier series
\begin{align*}
h(\theta)=\widehat{h}_0+\sum_{m=1}^{\infty}\left(\widehat{h}_m e^{\rmi m\theta}+\overline{\widehat{h}_m} e^{-\rmi m\theta}\right),\quad\widehat{h}_m 
= \dfrac{1}{2\pi\gamma} \int_{\partial D}\varphi_{-m}h_{\partial D}d\sigma(\bm{x}).
\end{align*}

To recover $\eps h$, we consider 
\beq\label{Delta_nm}
\Delta_{nm}^{(t,s)}:={\widetilde{\mathbb{E}}}_{nm}^{(t,s)}(\Omega)- {\widetilde{\mathbb{E}}}_{nm}^{(t,s)}(D)\quad\mbox{for }t,s=1,2,
\eeq
where the values of $\Delta_{nm}^{(t,s)}$ can be easily obtained from ${\mathbb{E}}_{nm}^{(t,s)}(\Omega)$ by using \cref{def:cnk,modiEMT:EMT,modiEMT:D}.
By \cref{thm:asymp:interior}, we derive the following theorem. The proof is provided at the end of this subsection. 
\begin{theorem}\label{thm:main:asymptotic}
Let $(\l,\m)$ be the background Lam\'e constants. Let $\Omega$ be a bounded simply connected domain given by \cref{Om:disk_peturb}, whose Lam\'e constants $(\tilde{\l},\tilde{\m})$ are known. For some $\delta>0$, it holds that
\begin{align*}
&\ \Delta_{nm}^{(1,1)}+\Delta_{nm}^{(2,2)}-\rmi (\Delta_{nm}^{(1,2)}-\Delta_{nm}^{(2,1)})\\
&=\, 8(\tilde{\mu}-\mu)nm\gamma^{n+m-1}\,M_1\, \eps \int_{\p D}\varphi_{-n+m}\, h_{\partial D}\,d\sigma(x)+O(\eps^{1+\delta}),\\
&\Delta_{nm}^{(1,1)}-\Delta_{nm}^{(2,2)}+\rmi (\Delta_{nm}^{(1,2)}+\Delta_{nm}^{(2,1)})\\
&=\, 8 (\tilde{\mu}-\mu)nm\gamma^{n+m-1}\, M_1 M_2\, \eps\int_{\p D}  \varphi_{-n-m-2} \,h_{\partial D}\,d\sigma(x)+O(\eps^{1+\delta})
\end{align*}
with
\beq\label{def:M1:M2}
 M_1=\frac{1}{\tilde{\mu}{\a}+\mu\b},\quad M_2= \frac{\b(\mu-\tilde{\mu})}{\tilde{\mu}{\a}+\mu\b}.
\eeq
\end{theorem}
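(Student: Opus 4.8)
The plan is to apply \cref{thm:asymp:interior} with $\Omega^{(0)}=D$ and with the background fields $\H,\F$ whose complex expressions are $\widetilde{h}_n^{(t)}$ and $\widetilde{h}_m^{(s)}$; then the left-hand side of \cref{thm:asymp:interior} is exactly $\widetilde{\mathbb{E}}_{nm}^{(t,s)}(\Omega)-\widetilde{\mathbb{E}}_{nm}^{(t,s)}(D)=\Delta_{nm}^{(t,s)}$ by \cref{def:Enmts}. The first step is to merge the two boundary integrals of \cref{thm:asymp:interior} into a single integral over all of $\p D$. For this I would prove the pointwise reciprocity identity $(\CC_1-\CC_0)\widehat{\n}\bu_0^i:\widehat{\n}\bv_0^e=(\CC_1-\CC_0)\widehat{\n}\bu_0^e:\widehat{\n}\bv_0^i$ on $\p D$. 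Writing each strain in the frame $\{N,T\}$ with entries $\eps_{NN},\eps_{TT},\eps_{NT}$, the transmission conditions \cref{eqn:density:phi:psi} give $\eps_{TT}^i=\eps_{TT}^e$ (continuity of $\bu_0$) together with $\tilde{\lambda}\eps_{TT}+(\tilde{\lambda}+2\tilde{\mu})\eps_{NN}^i=\lambda\eps_{TT}+(\lambda+2\mu)\eps_{NN}^e$ and $\tilde{\mu}\eps_{NT}^i=\mu\eps_{NT}^e$ (continuity of traction). Substituting these relations for both $\bu_0$ and $\bv_0$ into the two contractions, the shear parts cancel directly because $\tilde{\mu}\eps_{NT}^i=\mu\eps_{NT}^e$, and the normal parts cancel after a short computation using the normal-traction relations; hence the two integrands coincide and \cref{thm:asymp:interior} reduces to $\Delta_{nm}^{(t,s)}=\eps\int_{\p D}h_{\p D}\,(\CC_1-\CC_0)\widehat{\n}\bu_0^i:\widehat{\n}\bv_0^e\,d\sigma+O(\eps^{1+\delta})$.

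Next I would pass to complex variables. With $u=(\bu)_1+\rmi(\bu)_2$ and the Wirtinger derivatives $P_u=\p u/\p z$, $Q_u=\p u/\p\overline{z}$, a short Wirtinger-calculus computation gives $\n\cdot\bu=2\Re P_u$, $\eps_{11}-\eps_{22}=2\Re Q_u$ and $\eps_{12}=\Im Q_u$, whence, for any pair of displacements,
\[
(\CC_1-\CC_0)\widehat{\n}\bu:\widehat{\n}\bv=4\big[(\tilde{\lambda}-\lambda)+(\tilde{\mu}-\mu)\big]\Re P_u\,\Re P_v+4(\tilde{\mu}-\mu)\Re\big\{Q_u\overline{Q_v}\big\}.
\]
The decisive simplification is that the interior solution $\bu_0^i=\widetilde{S}_{\p D}[\psi]$ for $H=\widetilde{h}_n^{(t)}$, $t=1,2$, is purely anti-holomorphic in $z-a_0$ by \cref{Scal:int:p0}, so $P_{u_0^i}=0$ and only the term $4(\tilde{\mu}-\mu)\Re\{Q_{u_0^i}\overline{Q_{v_0^e}}\}$ survives. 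I would then read off the boundary values on $\p D$: from \cref{Scal:int:p0}, $Q_{u_0^i}=M_1\eta_t\,n\,\overline{(z-a_0)^{n-1}}$ with $\eta_1=1$, $\eta_2=-\rmi$, while $Q_{v_0^e}$ is obtained by differentiating $\F+S_{\p D}[\varphi]$ through \cref{Scal:ext:p0} and reducing all powers via the boundary relation $\overline{z-a_0}=\gamma^2/(z-a_0)$ coming from \cref{bound:relation}.

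The outcome is that, on $\p D$, the product $Q_{u_0^i}\overline{Q_{v_0^e}}$ is a sum of exactly two terms, $\eta_t\overline{\eta_s}\,A_0+\eta_t\eta_s\,B_0$, where $A_0=M_1 nm\gamma^{n+m-1}\varphi_{-n+m}$ comes from the background $\F$ and $B_0=-\tfrac12\beta M_0 M_1\,nm\gamma^{n+m-1}\varphi_{-n-m-2}$ comes from $S_{\p D}[\varphi]$; here $\eta_s$ (resp.\ $\overline{\eta_s}$) records the $s$-dependence of the density (resp.\ of $\F$). The structural point is that the two terms carry the conjugate phase factors $\eta_t\overline{\eta_s}$ and $\eta_t\eta_s$, so that writing $R^{(t,s)}=\Re\{Q_{u_0^i}\overline{Q_{v_0^e}}\}$ one finds $R^{(1,1)}+R^{(2,2)}-\rmi(R^{(1,2)}-R^{(2,1)})=2A_0$ and $R^{(1,1)}-R^{(2,2)}+\rmi(R^{(1,2)}+R^{(2,1)})=2B_0$; that is, the two prescribed combinations project onto the $\varphi_{-n+m}$ and the $\varphi_{-n-m-2}$ contributions. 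Multiplying by $4(\tilde{\mu}-\mu)\eps$ and integrating against $h_{\p D}$ yields the two asserted identities once one simplifies the coefficient via $M_0=2(\tilde{\mu}-\mu)M_1$ from \cref{def:M0}, so that $-\tfrac12\beta M_0 M_1=M_1 M_2$ by \cref{def:M1:M2}; the $O(\eps^{1+\delta})$ error is inherited from \cref{thm:asymp:interior}.

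I expect the main obstacle to be twofold. The reciprocity identity of the first step is the conceptual key that makes a single closed-form integral possible, and checking the cancellation of all non-matching strain terms requires careful simultaneous use of all four transmission relations. The heavier but routine part is the exact evaluation of $Q_{v_0^e}$ on $\p D$ together with the sign and coefficient bookkeeping across the four index pairs $(t,s)$; keeping track of which monomial carries $\eta_s$ rather than $\overline{\eta_s}$ is precisely what makes the two linear combinations separate cleanly.
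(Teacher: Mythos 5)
Your proposal is correct and follows essentially the same route as the paper's proof: apply \cref{thm:asymp:interior} with $\Omega^{(0)}=D$, use the explicit circular-disk solutions \cref{Scal:int:p0,Scal:ext:p0}, exploit that the interior solution is anti-holomorphic so only the $\Re\{Q_u\overline{Q_v}\}$ part of the contraction survives (your Wirtinger decomposition is exactly the paper's $\mathbb{J}$-matrix formalism, since $\widehat{\n}v=\Re P_v\,\mathbb{I}_2+\tfrac12\overline{Q_v}\,\mathbb{J}+\tfrac12 Q_v\,\overline{\mathbb{J}}$), and then take the four linear combinations over $(t,s)$ to isolate the $\varphi_{-n+m}$ and $\varphi_{-n-m-2}$ modes. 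The only presentational difference is that you establish the reciprocity identity $(\CC_1-\CC_0)\widehat{\n}\bu_0^i:\widehat{\n}\bv_0^e=(\CC_1-\CC_0)\widehat{\n}\bu_0^e:\widehat{\n}\bv_0^i$ in general from the transmission conditions, whereas the paper simply verifies the equality of the two sides by explicit computation in the circular case; both are valid and your coefficient bookkeeping checks out against \cref{density_relation}.
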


As a direct consequence of \cref{thm:main:asymptotic}, we derive an explicit asymptotic formula for the Fourier coefficients of the shape perturbation function $h$ as the following theorem.
\begin{theorem}\label{thm:main_result}
Let $(\l,\m)$ be the background Lam\'e constants. Let $\Omega$ be a bounded simply connected domain given by \cref{Om:disk_peturb}, whose Lam\'e constants $(\tilde{\l},\tilde{\m})$ are known. Assume that for some $K\in\NN$, ${\mathbb{E}}_{nm}^{(t,s)}(\Omega)$ are given for $1\leq m\leq n\leq K$, $t,s=1,2$.
Then, the boundary of $\Omega$ can be approximated by the parameterized curve:
\begin{align}\label{eqn:main_result}
\theta\mapsto a_0+\gamma e^{\rmi\theta}+2  \Re{\left(\sum_{k=0}^{K-1}\eps\widehat{h}_ke^{\rmi k\theta}\right) } \gamma e^{\rmi\theta},\quad \theta\in[0,2\pi),
\end{align}
where the Fourier coefficients $\widehat{h}_k$ of the shape perturbation function $h(\theta)$ satisfy
\begin{align}\label{hat_h:formula}
\begin{aligned}
\ds\eps\widehat{h}_{n-m}&=
\dfrac{\Delta_{nm}^{(1,1)}+\Delta_{nm}^{(2,2)}-\rmi (\Delta_{nm}^{(1,2)}-\Delta_{nm}^{(2,1)})}{16\pi nm\gamma^{n+m}(\tilde{\mu}-\mu)\,M_1}
+O(\eps^{1+\delta}),
\\[1mm] 
\ds\eps\widehat{h}_{n+m+2}&=
\dfrac{\Delta_{nm}^{(1,1)}-\Delta_{nm}^{(2,2)}+\rmi (\Delta_{nm}^{(1,2)}+\Delta_{nm}^{(2,1)})}{16\pi nm\gamma^{n+m}{(\tilde{\mu}-\mu)\,M_1M_2}}
+O(\eps^{1+\delta})
\end{aligned}
\end{align}
with $M_1,M_2$ given by \cref{def:M1:M2}.
\end{theorem}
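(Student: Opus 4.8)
The plan is to obtain \cref{thm:main_result} as an essentially algebraic consequence of \cref{thm:main:asymptotic} together with the definition of the Fourier coefficients $\widehat{h}_m$; the analytic difficulty has already been absorbed into \cref{thm:main:asymptotic}, so what remains is to invert its two identities and to convert the recovered Fourier data into a boundary parameterization.

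First I would express the boundary integrals in \cref{thm:main:asymptotic} through the $\widehat{h}_m$. The definition $\widehat{h}_m=\frac{1}{2\pi\gamma}\int_{\p D}\varphi_{-m}h_{\p D}\,d\sigma$ gives $\int_{\p D}\varphi_{-k}\,h_{\p D}\,d\sigma=2\pi\gamma\,\widehat{h}_k$ for every $k$. Putting $k=n-m$ into the first identity and $k=n+m+2$ into the second, the prefactor $\gamma^{n+m-1}$ combines with this $\gamma$ to yield $\gamma^{n+m}$, and solving the two resulting scalar relations for $\eps\widehat{h}_{n-m}$ and $\eps\widehat{h}_{n+m+2}$ reproduces \cref{hat_h:formula}, the $O(\eps^{1+\delta})$ remainders being inherited directly from \cref{thm:main:asymptotic}. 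This inversion is legitimate exactly when the leading coefficients $(\tmu-\mu)M_1$ and $(\tmu-\mu)M_1M_2$ are nonzero. Since $\a,\b>0$ under the ellipticity hypotheses, $M_1=1/(\tmu\a+\mu\b)>0$, while $M_2=\b(\mu-\tmu)/(\tmu\a+\mu\b)$; hence both conditions collapse to the shear-contrast requirement $\mu\neq\tmu$. I would record here that the case of vanishing shear contrast $\mu=\tmu$ (only a bulk-modulus contrast) makes the leading terms in \cref{thm:main:asymptotic} vanish and is genuinely degenerate for these formulas.

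Next I would assemble the parameterization \cref{eqn:main_result}. By \cref{Om:disk_peturb} a point of $\p\Omega$ is $z+\eps h_{\p D}(z)N_{\p D}(z)$ with $z\in\p D$; writing $z=a_0+\gamma e^{\rmi\theta}$, the outward unit normal in complex notation is $N_{\p D}(z)=e^{\rmi\theta}$, and the convention $h_{\p D}(a_0+\gamma e^{\rmi\theta})=\gamma h(\theta)$ turns the boundary point into $a_0+\gamma e^{\rmi\theta}\bigl(1+\eps h(\theta)\bigr)$. Replacing $\eps h(\theta)$ by its Fourier series and truncating produces \cref{eqn:main_result} once one writes the real-valued perturbation as $\eps h(\theta)=2\Re\bigl(\sum_{k\geq0}\eps\widehat{h}_k e^{\rmi k\theta}\bigr)$. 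The truncation range is fixed by index bookkeeping: as $(n,m)$ runs over $1\leq m\leq n\leq K$, the first identity supplies $\widehat{h}_{n-m}$ for every $0\leq n-m\leq K-1$, so precisely the modes $\widehat{h}_0,\dots,\widehat{h}_{K-1}$ used in \cref{eqn:main_result} are recoverable from the prescribed data, while the second identity furnishes the higher modes $\widehat{h}_{n+m+2}$ as additional (redundant or finer) information.

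The one delicate point---and where the approximating disk enters beyond bookkeeping---is the consistency of the lowest modes. The genuine real expansion is $h(\theta)=\widehat{h}_0+2\Re\sum_{k\geq1}\widehat{h}_k e^{\rmi k\theta}$, whereas the compact form $2\Re\sum_{k\geq0}\widehat{h}_k e^{\rmi k\theta}$ doubles the constant term, and the $\widehat{h}_1$ mode of $h$ contributes a pure translation inside $\gamma e^{\rmi\theta}\eps h(\theta)$. Both discrepancies are immaterial to order $O(\eps^{1+\delta})$ because $D$ was chosen in \cref{subsec:approx:disk} to match the leading coefficients $\mathbb{E}^{(1,1)}_{11}$, $\mathbb{E}^{(1,1)}_{12}$, $\mathbb{E}^{(1,2)}_{12}$, which normalizes the radius and center and thereby forces $\eps\widehat{h}_0,\eps\widehat{h}_1=O(\eps^{1+\delta})$ through the $(n,m)=(1,1)$ and $(2,1)$ cases of \cref{hat_h:formula}. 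I expect this reconciliation of the constant and linear modes with the disk normalization, rather than the inversion or the geometric substitution, to be the only part requiring genuine care, the rest being routine once \cref{thm:main:asymptotic} is granted.
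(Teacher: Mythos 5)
Your derivation of \cref{hat_h:formula} is correct and is exactly the paper's route: \cref{thm:main_result} is obtained from \cref{thm:main:asymptotic} purely by substituting $\int_{\p D}\varphi_{-k}\,h_{\p D}\,d\sigma=2\pi\gamma\,\widehat{h}_k$ and dividing by the leading coefficients, and your observation that this requires $\mu\neq\tmu$ (so that $(\tmu-\mu)M_1$ and $(\tmu-\mu)M_1M_2$ are nonzero) is a worthwhile explicit addition.

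One caveat on your final paragraph: the claim that the disk normalization forces $\eps\widehat{h}_0,\eps\widehat{h}_1=O(\eps^{1+\delta})$ is not justified and is false in general. Matching $\mathbb{E}^{(1,1)}_{11}$ gives $\Delta_{11}^{(1,1)}=0$ exactly, but by \cref{hat_h:formula} the coefficient $\eps\widehat{h}_0$ is read off from $\Delta_{11}^{(1,1)}+\Delta_{11}^{(2,2)}-\rmi(\Delta_{11}^{(1,2)}-\Delta_{11}^{(2,1)})$, and $\Delta_{11}^{(2,2)}$ --- which compares the EMTs for the $45^\circ$-rotated shear $h_1^{(2)}=(-x_2,-x_1)$ --- is generically of order $\eps$ and is not controlled by the three leading EMTs used to fix $a_0$ and $\gamma$; similarly the $\Delta_{21}^{(t,s)}$ are not all forced to vanish, so $\widehat{h}_1$ is generically nonzero. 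This does not affect \cref{hat_h:formula} at all (those formulas recover whatever $\widehat{h}_0,\widehat{h}_1$ happen to be), and the $\widehat{h}_1$ mode needs no reconciliation in the first place: it contributes $\gamma\overline{\widehat{h}_1}+\gamma\widehat{h}_1e^{2\rmi\theta}$, a translation plus a second harmonic, both of which are legitimately part of $\p\Omega$ relative to $D$. The only point it does touch is the $k=0$ term of \cref{eqn:main_result}, where $2\Re\bigl(\sum_{k\geq0}\eps\widehat{h}_ke^{\rmi k\theta}\bigr)$ doubles the constant mode compared with $h=\widehat{h}_0+2\Re\sum_{k\geq1}\widehat{h}_ke^{\rmi k\theta}$; since $\eps\widehat{h}_0$ is not automatically $O(\eps^{1+\delta})$, that doubling should be corrected by hand (take the $k=0$ term with coefficient $1$) rather than argued away via the disk fit.
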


\begin{proof}[Proof of \cref{thm:main:asymptotic}]
Let $\gamma$, $w$ and $\varphi_k$ be given as in \cref{subsec:single:expan}. 
We set $u$ to be the complex function corresponding to the solution $\bu$  to \cref{eqn:main:trans} for $\H$ given by \cref{H:p0} as in \cref{subsec:trans:disk}.
We also set $u^e$ and $u^i$ as the restrictions of $u$ to the exterior and interior of $D$, respectively. 
To indicate the dependence of $u^e$ and $u^i$ on $(n,q)$, we also write $u^e(\cdot;n,q)$ and $u^i(\cdot;n,q)$.

As the novel idea to deal with the strain terms in the asymptotic formula in \cref{thm:asymp:interior}, we introduce the matrix $$\mathbb{J}:=\bmat{1 & \rmi \\ \rmi &-1}.$$
Note that $\operatorname{tr}(\mathbb{J})=\operatorname{tr}(\overline{\mathbb{J}})=0$ and
\begin{gather*}
\mathbb{J}:\mathbb{I}_2=\overline{\mathbb{J}}:\mathbb{I}_2=0,\quad
\mathbb{J}:\mathbb{J}=\overline{\mathbb{J}}:\overline{\mathbb{J}}=0,\quad
\mathbb{J}:\overline{\mathbb{J}}=4,
\end{gather*}
where $\mathbb{I}_2$ is the identity matrix.

For a vector-valued function $\bm{v}=(v_1,v_2)$ and $v=v_1+\rmi v_2$, we have
\begin{align*}
\widehat{\n}v:=\widehat{\n}\bm{v}=
\frac{1}{2}
\bmat{\ds\left(\pd{}{z}+\pd{}{\overline{z}}\right)(v+\overline{v}) & \ds \rmi \left(\pd{}{z}\overline{v}-\pd{}{\overline{z}}v\right)\\
\ds \rmi\left(\pd{}{z}\overline{v}-\pd{}{\overline{z}}v\right) & \ds \left(\pd{}{z}-\pd{}{\overline{z}}\right)(v-\overline{v})}.
\end{align*}
Hence, for $c\in\CC$ and $k\in\ZZ$, it holds that on $\p D$,
\begin{align*}
\widehat{\n}\,\overline{c(z-a_0)^k}&= \frac{k\gamma^{k}}{2}\left(c\varphi_{k-1}
\mathbb{J}+\overline{c}\varphi_{-k+1}\overline{\mathbb{J}}\right),\\
\widehat{\n}\,\overline{c}(z-a_0)^{k}&= \frac{k\gamma^{k}}{2}\left(\overline{c}\varphi_{k-1} + c\varphi_{-k+1}\right)\mathbb{I}_2,\\
\widehat{\n}\,(z-a_0)\,\overline{c(z-a_0)^k}&= \frac{\gamma^{k+1}}{2}\left(c\varphi_k+\overline{c}\varphi_{-k}\right)\mathbb{I}_2+
\frac{ k\gamma^{k+1}}{2}\left(
c\varphi_{k-2} \mathbb{J}+\overline{c}\varphi_{-k+2}\overline{\mathbb{J}}
\right).
\end{align*}

On $\p D$, by \cref{cndn:ngeq3:mn,Scal:int:p0,Scal:ext:p0}, it holds that
\begin{align*}
\widehat{\n}u^i(\cdot;n,q)
&=\frac{1}{\tilde{\mu}{\a}+\mu\b}\dfrac{n\gamma^{n}}{2}\left(q\varphi_{n-1}\mathbb{J}+\overline{q}\varphi_{-n+1}\overline{\mathbb{J}}\right),\\
\widehat{\n}u^e(\cdot;n,q)
&=\frac{n\gamma^{n}}{2}(q\varphi_{n-1}\mathbb{J}+\overline{q}\varphi_{-n+1}\overline{\mathbb{J}})
+(\alpha-\beta)\frac{\tilde{\mu}-\mu}{\tilde{\mu}\alpha+\mu\beta}\frac{n\gamma^{n}}{2}(\overline{q}\varphi_{-n-1} + q\varphi_{n+1})\mathbb{I}_2
\\&\ -\frac{\beta(\tilde{\mu}-\mu)}{\tilde{\mu}\alpha+\mu\beta}\frac{n\gamma^n}{2}(\overline{q}\varphi_{-n-3}\mathbb{J}+q\varphi_{n+3}\overline{\mathbb{J}}).
\end{align*}
By \cref{C:symm:a}, it follows that
\beq\label{widehat:uie}
\begin{aligned}
(\CC_1-\CC_0)\widehat{\n}u^i(\cdot;n,q)&=2(\tilde{\mu}-\mu)\widehat{\n}u^i(\cdot;n,q),\\
(\CC_1-\CC_0)\widehat{\n}u^e(\cdot;n,q)&=\frac{\tilde{\lambda}-\lambda}{2\mu+\lambda}\frac{\tilde{\mu}-\mu}{\tilde{\mu}\alpha+\mu\beta}n\gamma^{n}(\overline{q}\varphi_{-n-1}+q\varphi_{n+1})\mathbb{I}_2
\\&\ \ 
+2(\tilde{\mu}-\mu)\widehat{\n}u^{e}(\cdot;n,q).
\end{aligned}
\eeq
Plugging in $q=1,i$, we obtain
\begin{align*}
\widehat{\n}u^i(\cdot;n,1)
&=M_1\,\frac{n\gamma^{n}}{2}\left(\varphi_{n-1}\mathbb{J}+\varphi_{-n+1}\overline{\mathbb{J}}\right),\\
\widehat{\n}u^i(\cdot;n,\rmi)
&=\rmi M_1\, \frac{n\gamma^{n} }{2}\left(\varphi_{n-1}\mathbb{J}-\varphi_{-n+1}\overline{\mathbb{J}}\right),\\
\widehat{\n}u^e(\cdot;n,1)
&=\frac{n\gamma^{n}}{2}(\varphi_{n-1}\mathbb{J}+\varphi_{-n+1}\overline{\mathbb{J}})
+M_2\frac{n\gamma^{n}}{2}(\varphi_{-n-3}\mathbb{J}+\varphi_{n+3}\overline{\mathbb{J}})
\\&\  \ +\mbox{other term}*\mathbb{I}_2,
\\
\widehat{\n}u^e(\cdot;n,\rmi)
 &=\rmi \frac{n\gamma^{n}}{2}(\varphi_{n-1}\mathbb{J}-\varphi_{-n+1}\overline{\mathbb{J}})
+\rmi M_2\frac{n\gamma^{n}}{2}(-\varphi_{-n-3}\mathbb{J}+\varphi_{n+3}\overline{\mathbb{J}})
\\&\ \ +\mbox{other term}*\mathbb{I}_2
\end{align*}
with the constants $M_1,M_2$ given by \cref{def:M1:M2}.

Applying \cref{widehat:uie}, we obtain
\begin{align*}
I:=&(\CC_1-\CC_0)\widehat{\n}u^i(\cdot;n,1):\widehat{\n}u^e(\cdot;m,1)
=(\CC_1-\CC_0)\widehat{\n}u^e(\cdot;n,1):\widehat{\n}u^i(\cdot;m,1)\\
=&\,2(\tilde{\mu}-\mu)nm\gamma^{n+m-1}\Big[M_1 \left(\varphi_{n-m}+\varphi_{-n+m}\right)
+M_1M_2\left(\varphi_{n+m+2}+\varphi_{-n-m-2}\right)\Big],\\[2mm]
II:=&(\CC_1-\CC_0)\widehat{\n}u^i(\cdot;n,1):\widehat{\n}u^e(\cdot;m,\rmi)
=(\CC_1-\CC_0)\widehat{\n}u^e(\cdot;n,1):\widehat{\n}u^i(\cdot;m,\rmi)\\
=&\,2(\tilde{\mu}-\mu)nm\gamma^{n+m-1}
\Big[\rmi M_1\left(-\varphi_{n-m}+\varphi_{-n+m}\right)+\rmi M_1M_2\left(\varphi_{n+m+2}-\varphi_{-n-m-2}\right)\Big]
\end{align*}
and
\begin{align*}
III:=&(\CC_1-\CC_0)\widehat{\n}u^i(\cdot;n,\rmi):\widehat{\n}u^e(\cdot;m,1)
=(\CC_1-\CC_0)\widehat{\n}u^e(\cdot;n,\rmi):\widehat{\n}u^i(\cdot;m,1)\\
=&\, 2 (\tilde{\mu}-\mu)nm\gamma^{n+m-1}
\Big[\rmi M_1\left(\varphi_{n-m}-\varphi_{-n+m}\right)+\rmi M_1M_2\left(\varphi_{n+m+2}-\varphi_{-n-m-2}\right)\Big],\\[2mm]
IV:=&(\CC_1-\CC_0)\widehat{\n}u^i(\cdot;n,\rmi):\widehat{\n}u^e(\cdot;m,\rmi)
=(\CC_1-\CC_0)\widehat{\n}u^e(\cdot;n,\rmi):\widehat{\n}u^i(\cdot;m,\rmi)\\
=&\, 2 (\tilde{\mu}-\mu)nm\gamma^{n+m-1}
\Big[M_1\left(\varphi_{n-m}+\varphi_{-n+m}\right)-M_1M_2\left(\varphi_{n+m+2}+\varphi_{-n-m-2}\right)\Big].
\end{align*}
One can easily find that
\beq\label{density_relation}
\begin{aligned}
I+IV-\rmi (II-III)
&= 8(\tilde{\mu}-\mu)nm\gamma^{n+m-1}\,M_1\,\varphi_{-n+m},\\
I-IV+\rmi (II+III)
&=8(\tilde{\mu}-\mu)nm\gamma^{n+m-1}\,M_1M_2\,\varphi_{-n-m-2}.
\end{aligned}
\eeq
In view of \cref{thm:asymp:interior}, $I$ corresponds to the modified EMTs $\widetilde{\mathbb{E}}_{nm}^{(1,1)}$. Similarly, $II$, $III$ and $IV$ correspond to $\widetilde{\mathbb{E}}_{nm}^{(1,2)}$, $\widetilde{\mathbb{E}}_{nm}^{(2,1)}$ and $\widetilde{\mathbb{E}}_{nm}^{(2,2)}$, respectively.
From \cref{density_relation}, we derive that
\beq\label{asymp:int:1}
\begin{aligned}
&\ \left(\widetilde{\mathbb{E}}_{nm}^{(1,1)}+\widetilde{\mathbb{E}}_{nm}^{(2,2)}-\rmi (\widetilde{\mathbb{E}}_{nm}^{(1,2)}-\widetilde{\mathbb{E}}_{nm}^{(2,1)})\right)(\Omega)
-\left(\widetilde{\mathbb{E}}_{nm}^{(1,1)}+\widetilde{\mathbb{E}}_{nm}^{(2,2)}-\rmi (\widetilde{\mathbb{E}}_{nm}^{(1,2)}-\widetilde{\mathbb{E}}_{nm}^{(2,1)})\right)(D)\\
&=\eps\int_{\p D} 8(\tilde{\mu}-\mu)nm\gamma^{n+m-1}\,M_1 \,\varphi_{-n+m}\, h_{\partial D}\,d\sigma(x)+O(\eps^{1+\delta})
\end{aligned}
\eeq
and
\beq\label{asymp:int:2}
\begin{aligned}
&\ \left(\widetilde{\mathbb{E}}_{nm}^{(1,1)}-\widetilde{\mathbb{E}}_{nm}^{(2,2)}+\rmi (\widetilde{\mathbb{E}}_{nm}^{(1,2)}+\widetilde{\mathbb{E}}_{nm}^{(2,1)})\right)(\Omega)
-\left(\widetilde{\mathbb{E}}_{nm}^{(1,1)}-\widetilde{\mathbb{E}}_{nm}^{(2,2)}+\rmi (\widetilde{\mathbb{E}}_{nm}^{(1,2)}+\widetilde{\mathbb{E}}_{nm}^{(2,1)})\right)(D)\\
&=\eps\int_{\p D} 8(\tilde{\mu}-\mu)nm\gamma^{n+m-1}\, M_1 M_2 \,\varphi_{-n-m-2} \,h_{\partial D}\,d\sigma(x)+O(\eps^{1+\delta}).
\end{aligned}
\eeq
Hence, we complete the proof of \cref{thm:main:asymptotic}.
\end{proof}

\section{Numerical computation}\label{sec:numerical}
We show numerical results for the recovery of an unknown elastic inclusion, $\Omega$, based on the analytic reconstruction methods provided in \cref{sec:analytic:recovery}, assuming that the Lam\'{e} constants $\lambda,\mu,\tlambda,\tmu$ are known.

All example inclusions have a smooth boundary. The background parameters are fixed to be $(\lambda,\mu)=(1.5,1.2)$ and the inclusion parameters $\tlambda,\tmu$ are given differently for each example. The values of $\tlambda,\tmu$ are described in figures of numerical results.
In each simulation, we use the contracted EMTs up to a given order, denoted as $\mbox{Ord}\in\NN$. In other words,
\begin{align}\label{data:EMT:Ord}
\left\{\mathbb{E}^{(t,s)}_{nm}(\Omega)\,:\,1\leq n,m\leq \text{Ord},\ t,s=1,2\right\}.
\end{align}
To acquire the values of the EMTs for the example inclusions, we solve the boundary integral equation \cref{eqn:density:phi:psi} and evaluate the integral in \cref{contracted_EMT:integral_form} by employing the Nystr\"om discretization method for the integrals. 

We recover $\Omega$ from \cref{data:EMT:Ord} by the following two-step procedure.
\begin{itemize}
\item {\textbf{Step 1.}} We find a disk $D=B(a_0^{\text{rec}},\gamma^{\text{rec}})$ whose values of $\mathbb{E}^{(1,1)}_{11},\mathbb{E}^{(1,1)}_{12},\mathbb{E}^{(1,2)}_{12}$ coincide with those of $\Omega$, respectively. In other words, by using  \cref{app_disk}, we have
\beq\label{app_disk:recon}
\begin{aligned}
\gamma^{\text{rec}}&=\frac{1}{\sqrt{2\pi M_0}}\sqrt{\mathbb{E}^{(1,1)}_{11}(\Omega)},\quad
a_0^{\text{rec}}=\frac{1}{4\pi\gamma^2 M_0}\left( \mathbb{E}^{(1,1)}_{12}(\Omega)-\rmi\, \mathbb{E}^{(1,2)}_{12}(\Omega)\right)
\end{aligned}
\eeq
with $M_0$ given by \cref{def:M0}, which is a constant depending on $\lambda,\mu,\tlambda,\tmu$.

\item {\textbf{Step 2.}} 
We obtain finer details of the shape of $\Omega$ by taking the image of
\beq\label{step2:algo}
\theta\mapsto a_0^{\text{rec}}+\gamma^{\text{rec}} e^{\rmi\theta}+2 \, \Re{\left(\sum_{k=0}^{\text{Ord}-1}\eps\widehat{h}_ke^{\rmi k\theta}\right) } \gamma^{\text{rec}} e^{\rmi\theta},\quad\theta\in[0,2\pi),
\eeq
where $\eps \widehat{h}_k$ is determined by \cref{hat_h:formula} with $n=k+1$ and $m=1$ and $\gamma^{\text{rec}}$ in the place of $\gamma$, neglecting the $O(\eps^{1+\delta})$ term.

\end{itemize}

In all figures, the gray curve represents the original shape of an example inclusion, while the black curve represents the reconstructed inclusion.

\begin{example}[Starfish-shaped domain] 
In \cref{figure:Star}, we consider a starfish-shaped inclusion whose boundary is given by
$$
z(\theta) = a_0+e^{\rmi\theta} + 2 \, \Re{\left( 8^{-1}e^{5\rmi\theta}\right)} e^{\rmi\theta}\quad\mbox{with }a_0=0, -0.9+1.2\rmi.
$$
For this example, we use the EMTs with multiplicative Gaussian noise, that is, \begin{align}
\label{EMT:noise}
 \mathbb{E}^{(t,s)}_{nm}
\left( 1 + \mathcal{N}\left( 0,\sigma_{\text{noise}}^2\right) \right) \quad\mbox{for each }n,m,t,s,
\end{align}
where $\mathcal{N}$ means the normal distribution. 
We illustrate the reconstruction results obtained by one sample set of EMTs with noise generated by \cref{EMT:noise}. 
The first row of \cref{figure:Star} is the case of $a_0=0$ and the second row is the case of $a_0=-0.9+1.2\rmi$.
Note that \cref{step2:algo} in Step 2 uses ${\widetilde{\mathbb{E}}}_{nm}^{(t,s)}(\Omega)$, on which the error in $a_0$ has a stronger effect for a larger magnitude of $a_0$. Hence, the proposed method is more sensitive to the measurement noise when $a_0$ has a larger magnitude, as is shown in \cref{figure:Star}. 
\end{example}

\begin{example}[Kite-shaped domain] 
In \cref{figure:Kite}, we consider a kite-shaped inclusion whose boundary is given by $z(\theta) =0.6+0.8\rmi + e^{\rmi \theta} + 0.65\cos(2\theta).$
The EMTs without noise up to $\text{Ord}=3,4,6,8$ are used.
\end{example}

\begin{example}[Ellipse] 
We consider an ellipse-shaped inclusion whose boundary is given by a parametrization:
$z(\theta) = 4\cos(\theta) + \rmi\sin(\theta)$. 
\Cref{figure:Ellipse} shows that the reconstruction performance varies depending on the Lam\'{e} constants even when the inclusion has the same shape. 
\end{example}

\begin{figure}[h!]
\begin{subfigure}{\linewidth}
\centering
\captionsetup{justification=centering}
\begin{minipage}{0.3\linewidth}
\subcaption*{\normalsize $\sigma_{\text{noise}}^2=0.01$}
\end{minipage}
\begin{minipage}{0.3\linewidth}
\subcaption*{\normalsize $\sigma_{\text{noise}}^2=0.05$}
\end{minipage}
\begin{minipage}{0.3\linewidth}
\subcaption*{\normalsize $\sigma_{\text{noise}}^2=0.1$}
\end{minipage}
\end{subfigure}
\begin{subfigure}{\linewidth}
\centering
\captionsetup{justification=centering}
\begin{minipage}{0.3\linewidth}
\includegraphics[width=.98\linewidth,trim=70 30 50 15, clip]{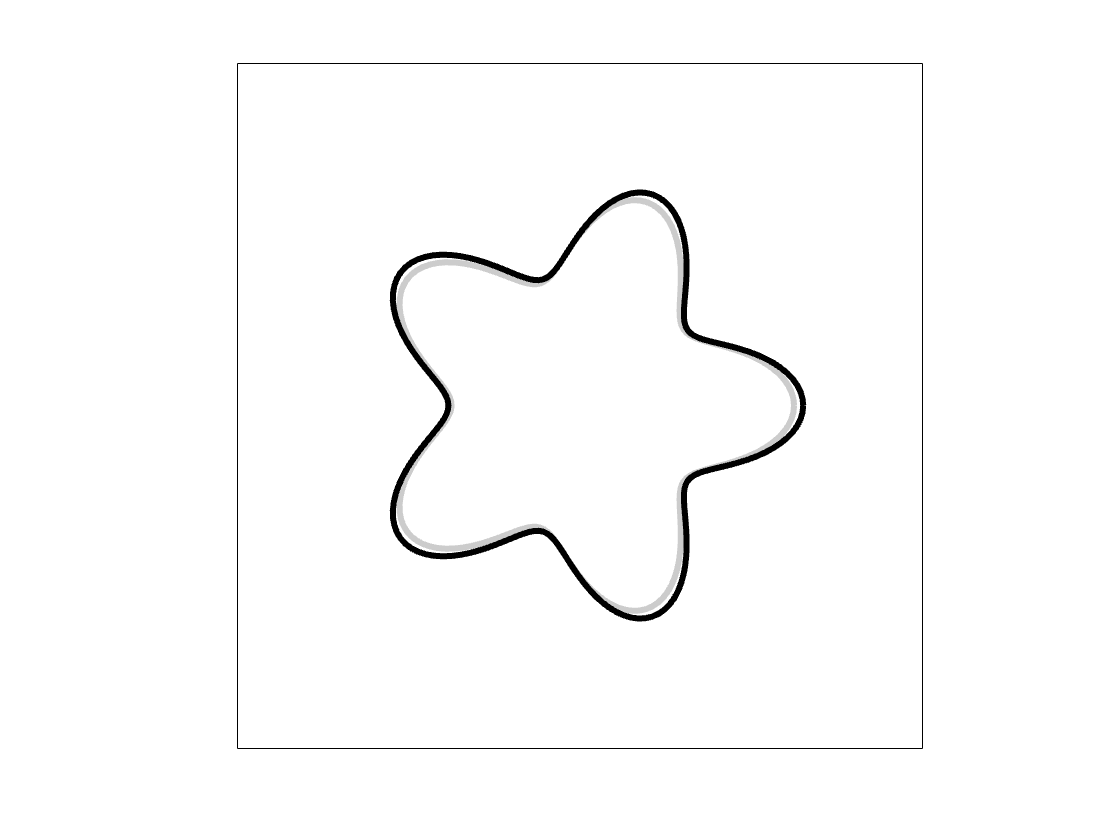}
\end{minipage}
\begin{minipage}{0.3\linewidth}
\includegraphics[width=.98\linewidth,trim = 70 30 50 15, clip]{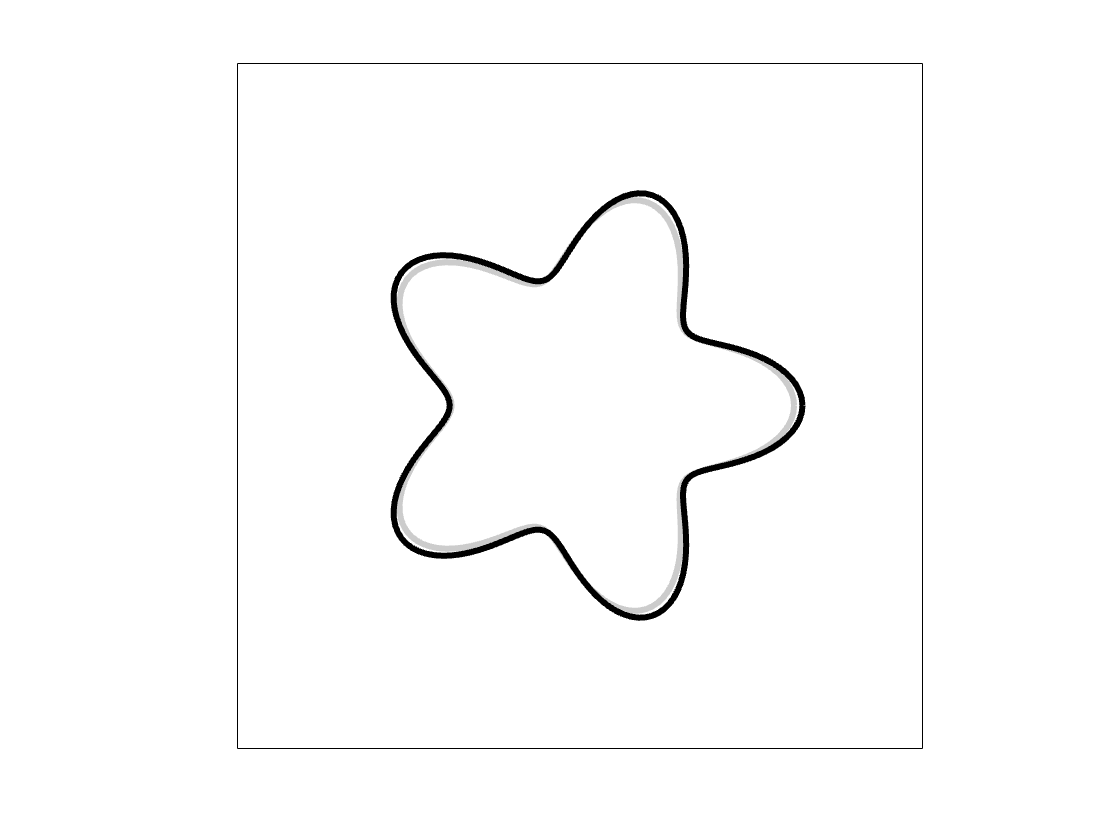}
\end{minipage}
\begin{minipage}{0.3\linewidth}
\includegraphics[width=.98\linewidth,trim = 70 30 50 15, clip]{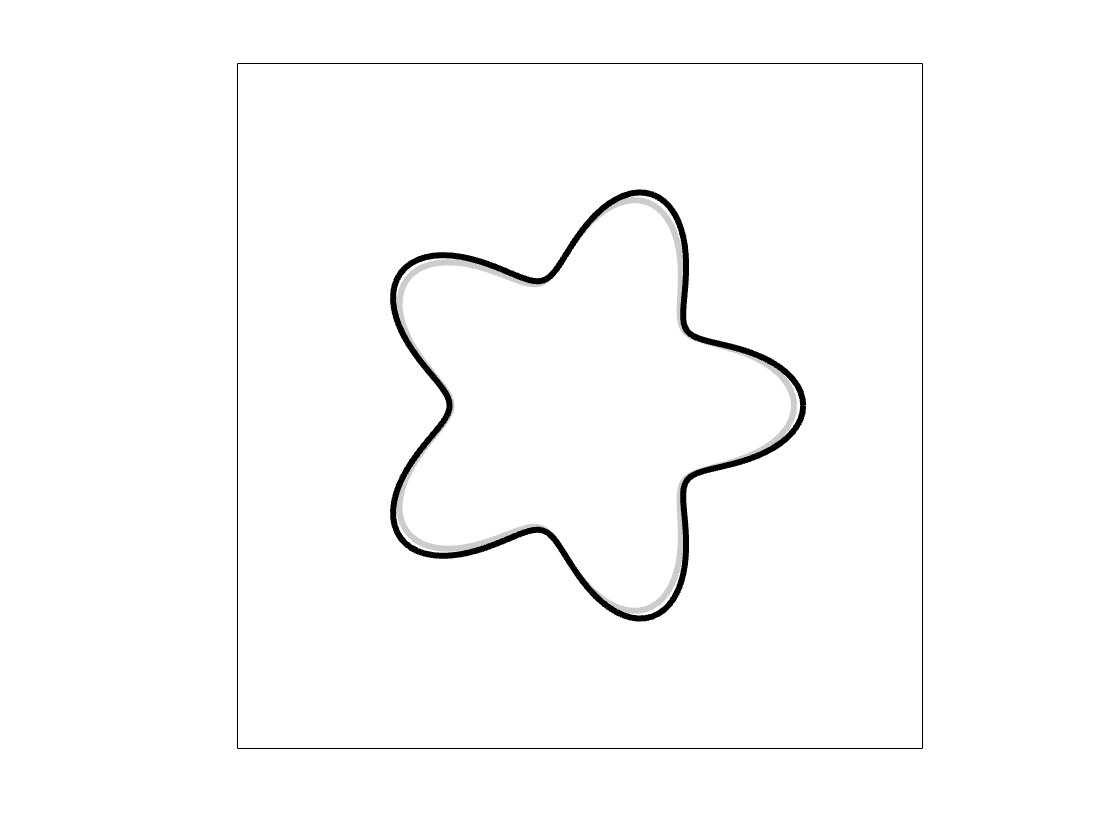}
\end{minipage}
\end{subfigure}
\begin{subfigure}{\linewidth}
\centering
\vskip -3.5mm
\captionsetup{justification=centering}
\begin{minipage}{0.3\linewidth}
\subcaption*{$ {a}_0^{\text{rec}} =  0.0216$\\${\gamma}^{\text{rec}}=1.0257 $}
\end{minipage}
\begin{minipage}{0.3\linewidth}
\subcaption*{${a}_0^{\text{rec}} = 0.0231$\\${\gamma}^{\text{rec}}=1.0318$}
\end{minipage}
\begin{minipage}{0.3\linewidth}
\subcaption*{${a}_0^{\text{rec}} = 0.0198$\\${\gamma}^{\text{rec}}=1.0290$}
\end{minipage}
\end{subfigure}
\begin{subfigure}{\linewidth}
\centering
\captionsetup{justification=centering}
\begin{minipage}{0.3\linewidth}
\includegraphics[width=.98\linewidth,trim=70 30 50 15, clip]{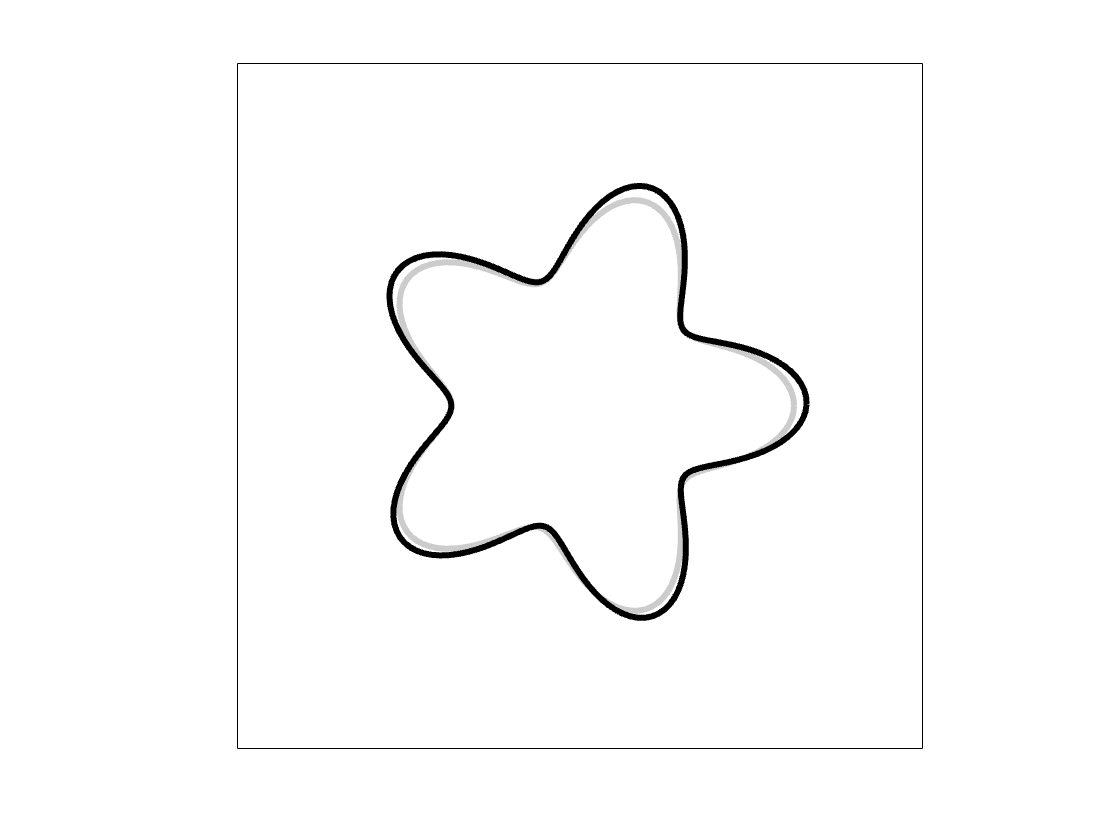}
\end{minipage}
\begin{minipage}{0.3\linewidth}
\includegraphics[width=.98\linewidth,trim = 70 30 50 15, clip]{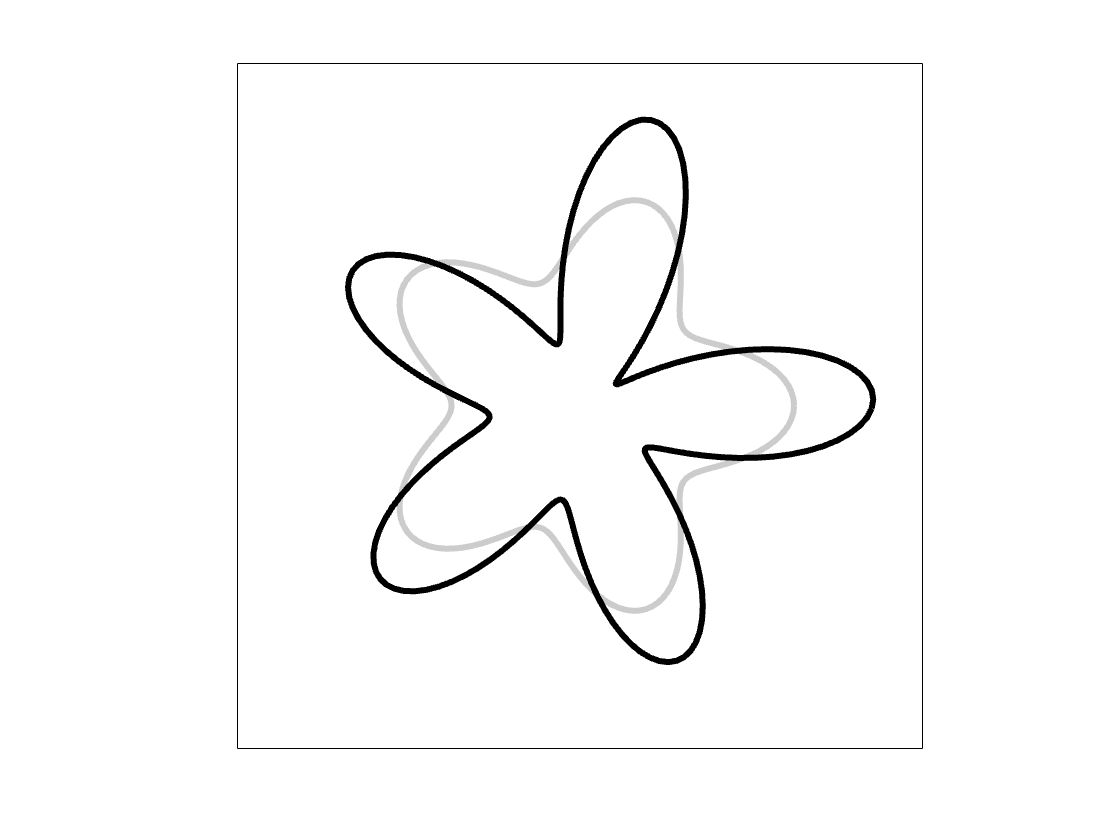}
\end{minipage}
\begin{minipage}{0.3\linewidth}
\includegraphics[width=.98\linewidth,trim = 70 30 50 15, clip]{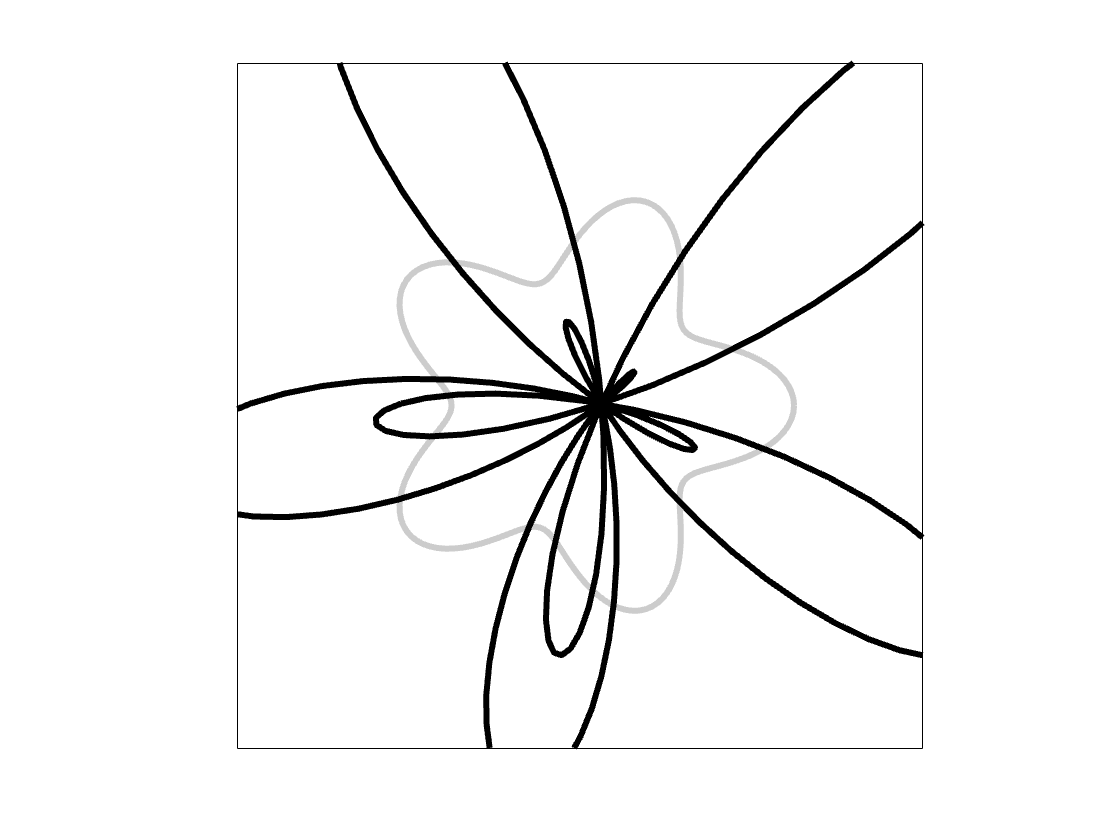}
\end{minipage}
\end{subfigure}
\begin{subfigure}{\linewidth}
\centering
\vskip -3.5mm
\captionsetup{justification=centering}
\begin{minipage}{0.3\linewidth}
\subcaption*{${a}_0^{\text{rec}} =    -0.8825 + 1.2076\rmi $ \\ $ {\gamma}^{\text{rec}}=1.0295 $}
\end{minipage}
\begin{minipage}{0.3\linewidth}
\subcaption*{${a}_0^{\text{rec}} =    -0.8647 + 1.1815\rmi$\\${\gamma}^{\text{rec}}=1.0343$}
\end{minipage}
\begin{minipage}{0.3\linewidth}
\subcaption*{${a}_0^{\text{rec}}= -0.7746 + 1.2110\rmi$\\${\gamma}^{\text{rec}}=1.0748$}
\end{minipage}
\end{subfigure}
\caption{Starfish-shaped inclusion. The Lam\'e constants for $\Om$ is $(\tilde{\lambda},\tilde{\mu}) = (0.6,0.4)$. The EMTs up to $\text{Ord}= 6$ are used. 
The first row is the case of $a_0=0$ and the second row is the case of $a_0=-0.9+1.2\rmi$. 
The reconstruction is more sensitive to the noise in EMTs when $a_0$ has a larger magnitude. }
\label{figure:Star}
\end{figure}

\begin{figure}[h!]
\begin{subfigure}{\linewidth}
\centering
\captionsetup{justification=centering}
\begin{minipage}{0.1\linewidth}
\subcaption*{}
\end{minipage}
\begin{minipage}{0.28\linewidth}
\subcaption*{\normalsize $(\tilde{\lambda},\tilde{\mu})=(1.8,1.5)$}
\end{minipage}
\begin{minipage}{0.28\linewidth}
\subcaption*{\normalsize$(\tilde{\lambda},\tilde{\mu})=(1,0.8)$}
\end{minipage}
\begin{minipage}{0.28\linewidth}
\subcaption*{\normalsize $(\tilde{\lambda},\tilde{\mu})=(0.6,0.4)$}
\end{minipage}
\end{subfigure}\\
\begin{subfigure}{\linewidth}
\centering
\captionsetup{justification=centering}
\begin{minipage}{0.1\linewidth}
\subcaption*{Ord$=3$}
\end{minipage}
\begin{minipage}{0.28\linewidth}
\includegraphics[width=0.98\linewidth,trim=70 30 50 15, clip]{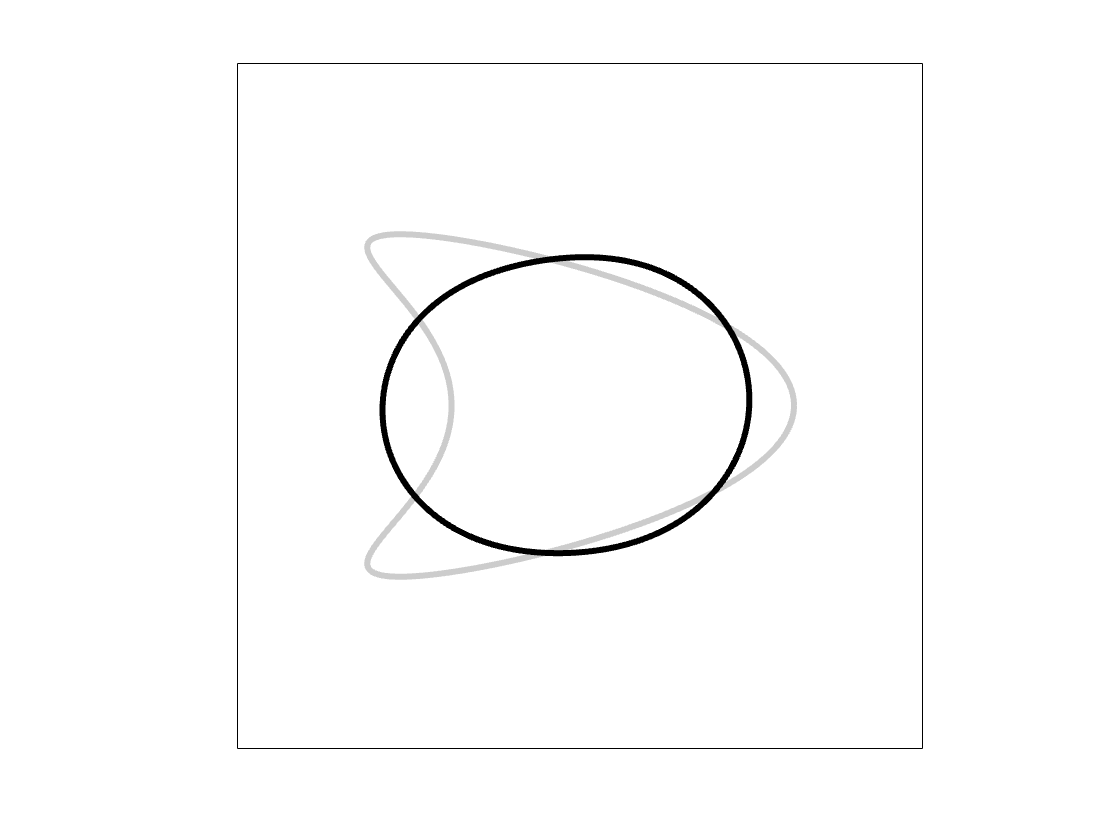}
\end{minipage}
\begin{minipage}{0.28\linewidth}
\includegraphics[width=0.98\linewidth,trim = 70 30 50 15, clip]{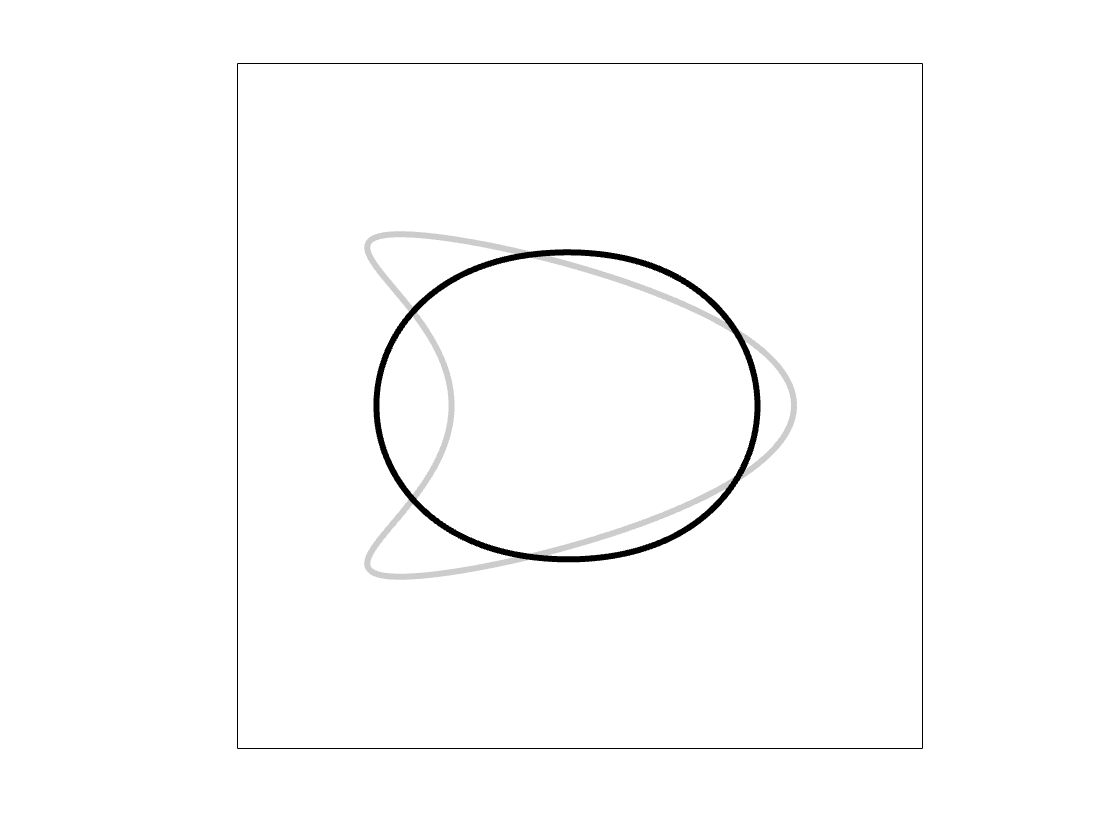}
\end{minipage}
\begin{minipage}{0.28\linewidth}
\includegraphics[width=0.98\linewidth,trim = 70 30 50 15, clip]{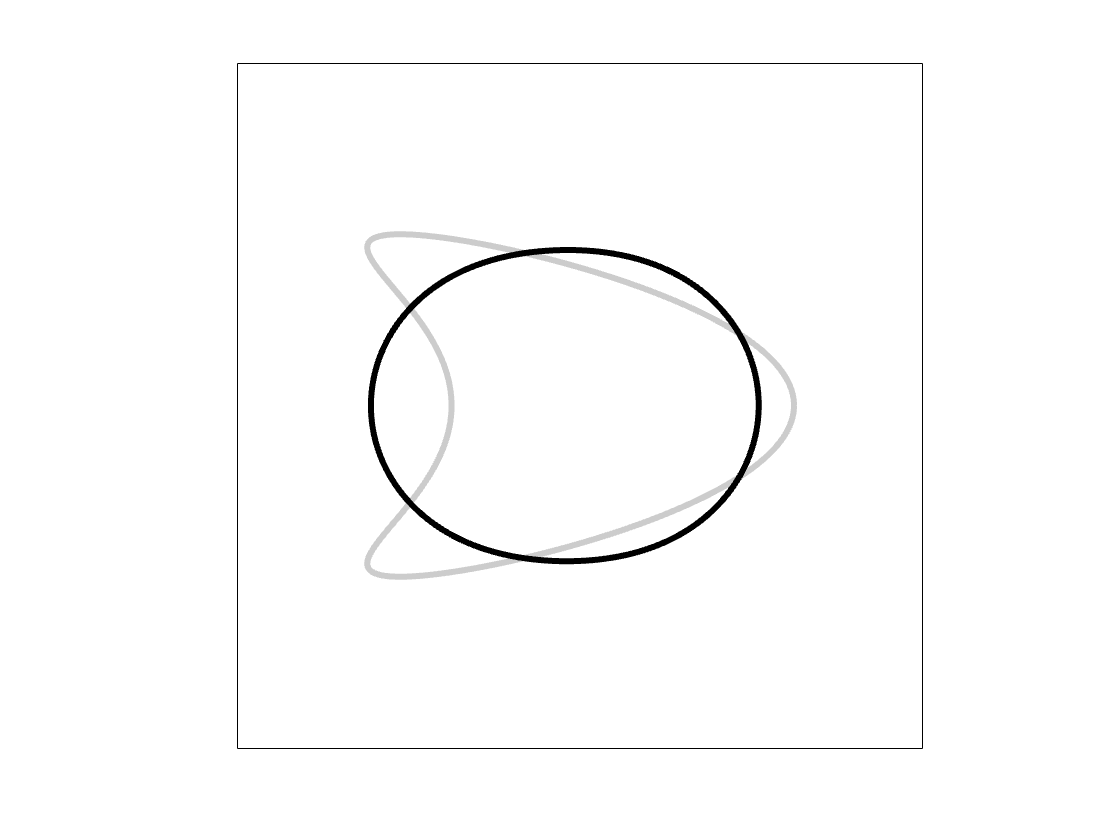}
\end{minipage}
\end{subfigure}\\
\begin{subfigure}{\linewidth}
\centering
\captionsetup{justification=centering}
\begin{minipage}{0.1\linewidth}
\subcaption*{Ord$=4$}
\end{minipage}
\begin{minipage}{0.28\linewidth}
\includegraphics[width=0.98\linewidth,trim=70 30 50 15, clip]{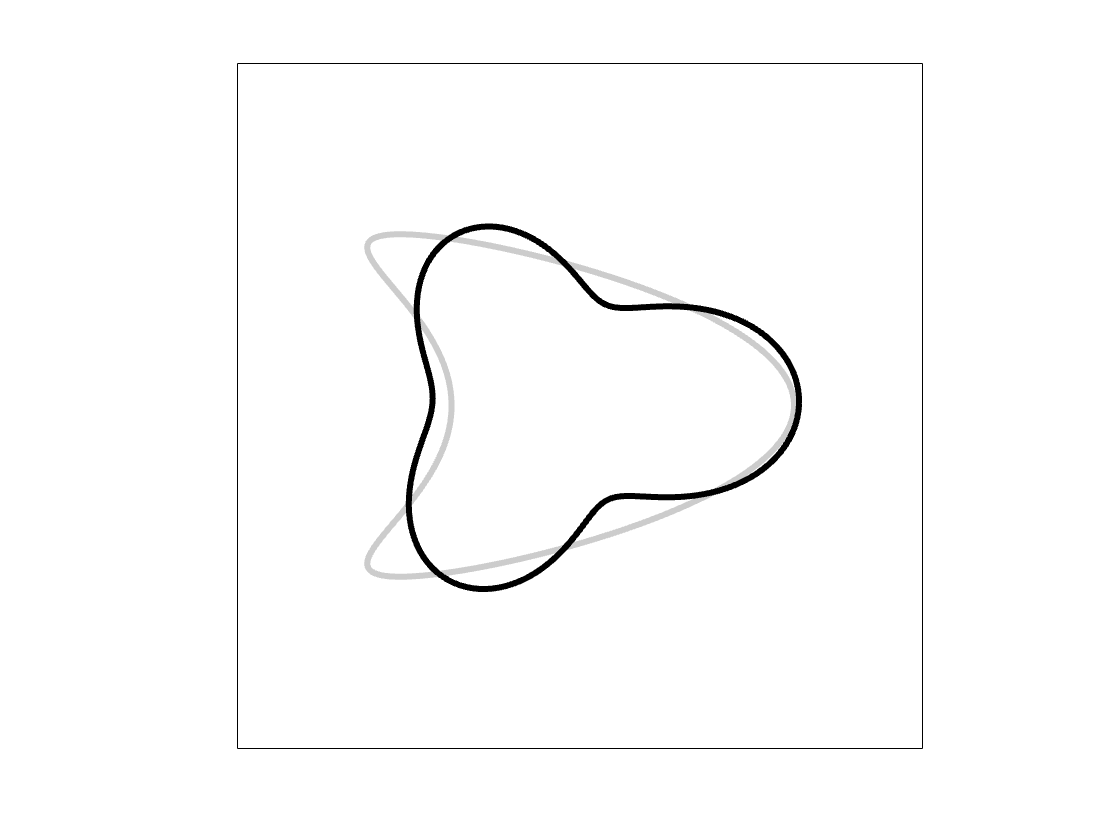}
\end{minipage}
\begin{minipage}{0.28\linewidth}
\includegraphics[width=0.98\linewidth,trim = 70 30 50 15, clip]{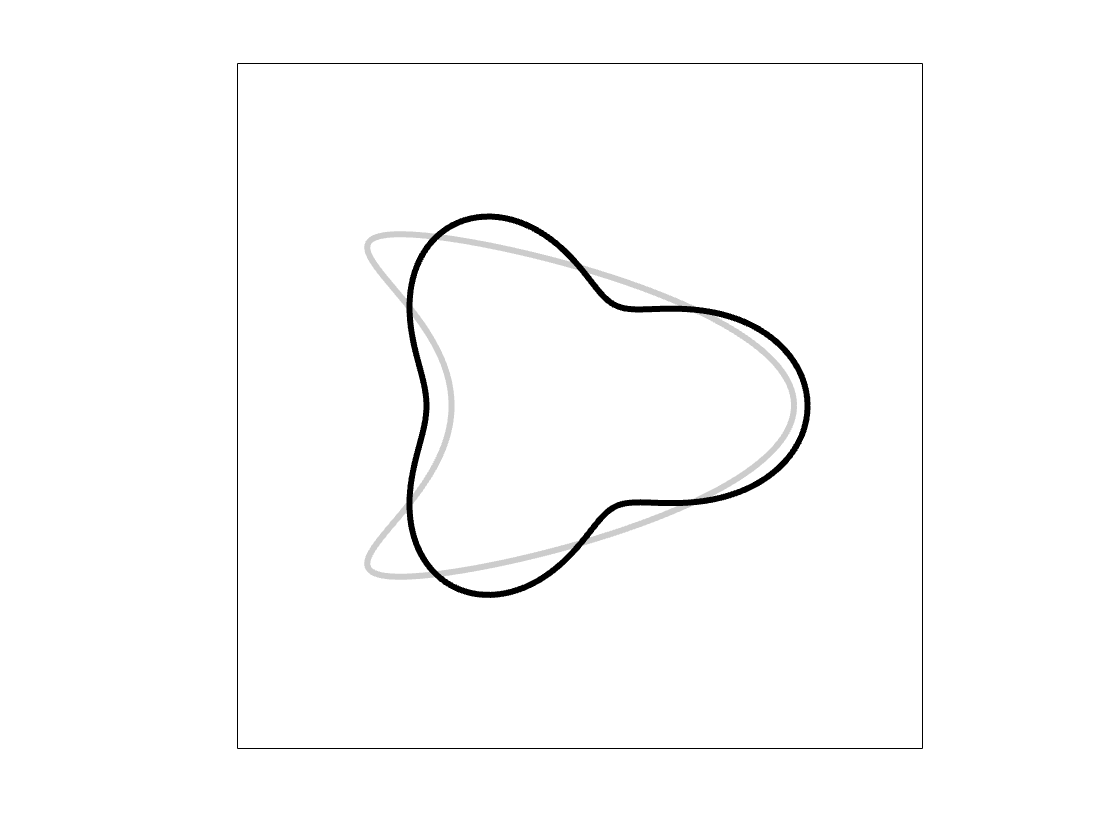}
\end{minipage}
\begin{minipage}{0.28\linewidth}
\includegraphics[width=0.98\linewidth,trim = 70 30 50 15, clip]{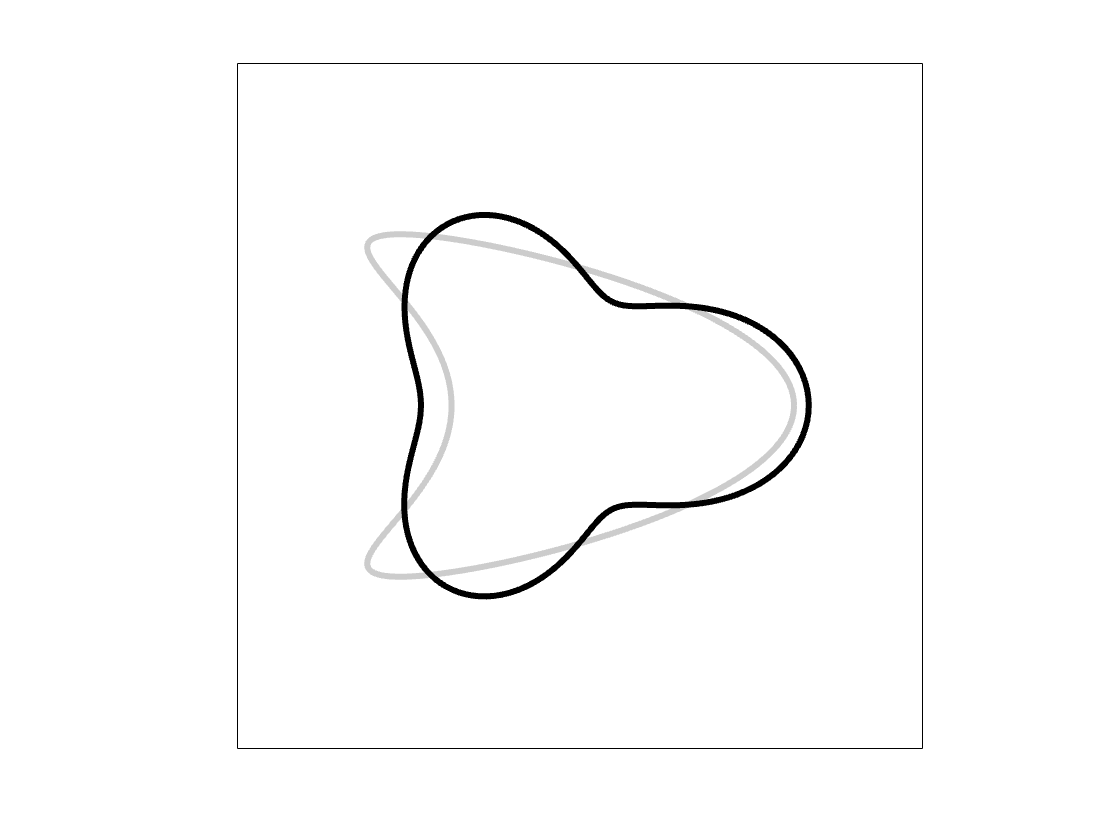}
\end{minipage}
\end{subfigure}\\
\begin{subfigure}{\linewidth}
\centering
\captionsetup{justification=centering}
\begin{minipage}{0.1\linewidth}
\subcaption*{Ord$=6$}
\end{minipage}
\begin{minipage}{0.28\linewidth}
\includegraphics[width=0.98\linewidth,trim=70 30 50 15, clip]{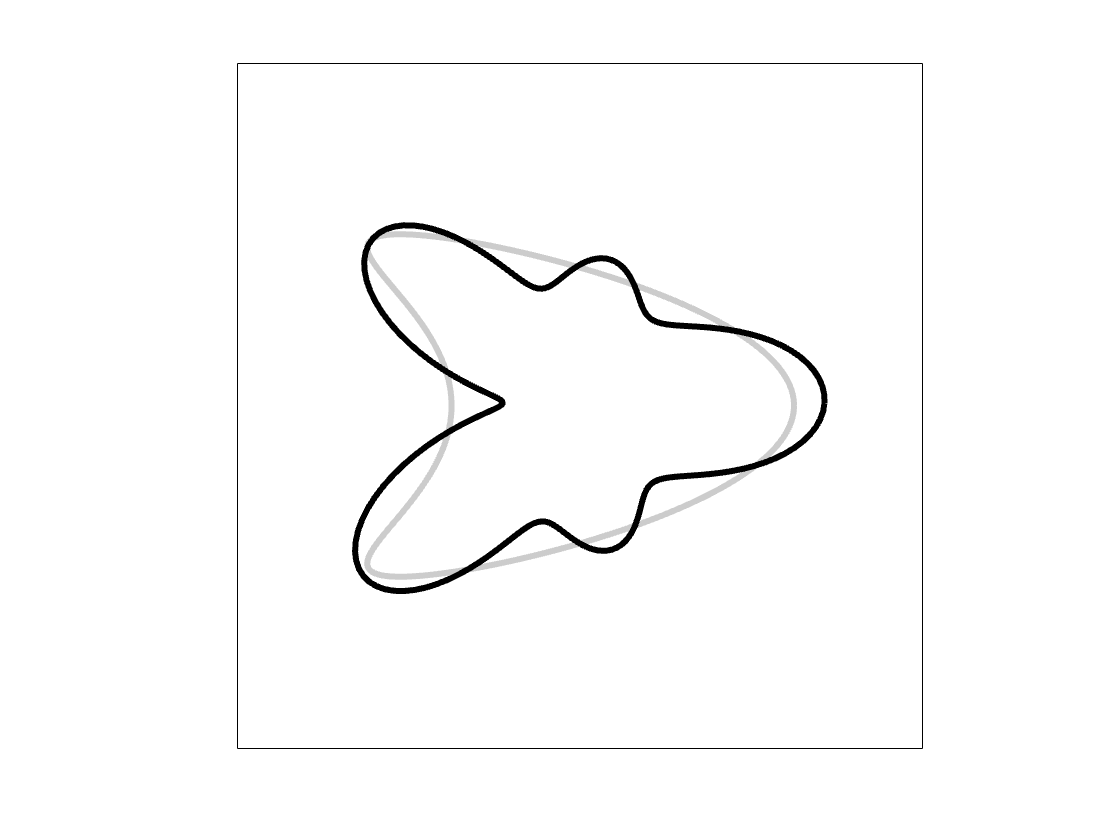}
\end{minipage}
\begin{minipage}{0.28\linewidth}
\includegraphics[width=0.98\linewidth,trim = 70 30 50 15, clip]{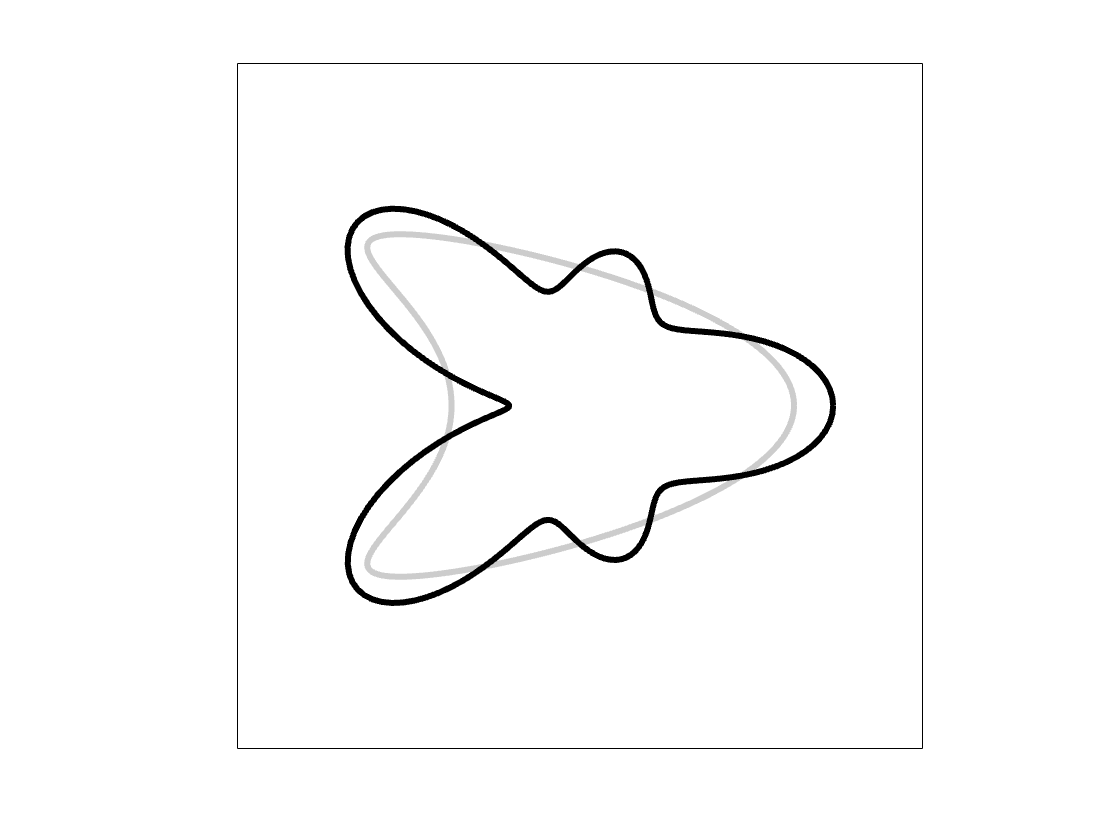}
\end{minipage}
\begin{minipage}{0.28\linewidth}
\includegraphics[width=0.98\linewidth,trim = 70 30 50 15, clip]{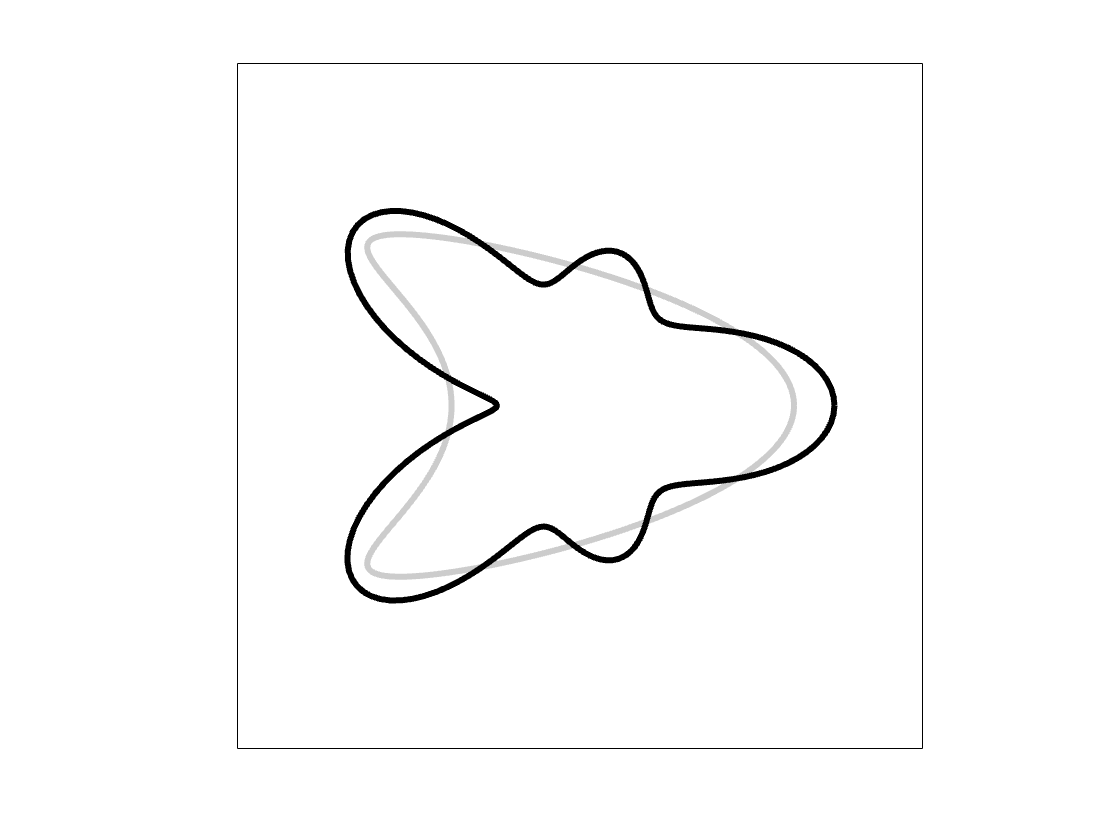}
\end{minipage}
\end{subfigure}\\
\begin{subfigure}{\linewidth}
\centering
\captionsetup{justification=centering}
\begin{minipage}{0.1\linewidth}
\subcaption*{Ord$=8$}
\end{minipage}
\begin{minipage}{0.28\linewidth}
\includegraphics[width=0.98\linewidth,trim=70 30 50 15, clip]{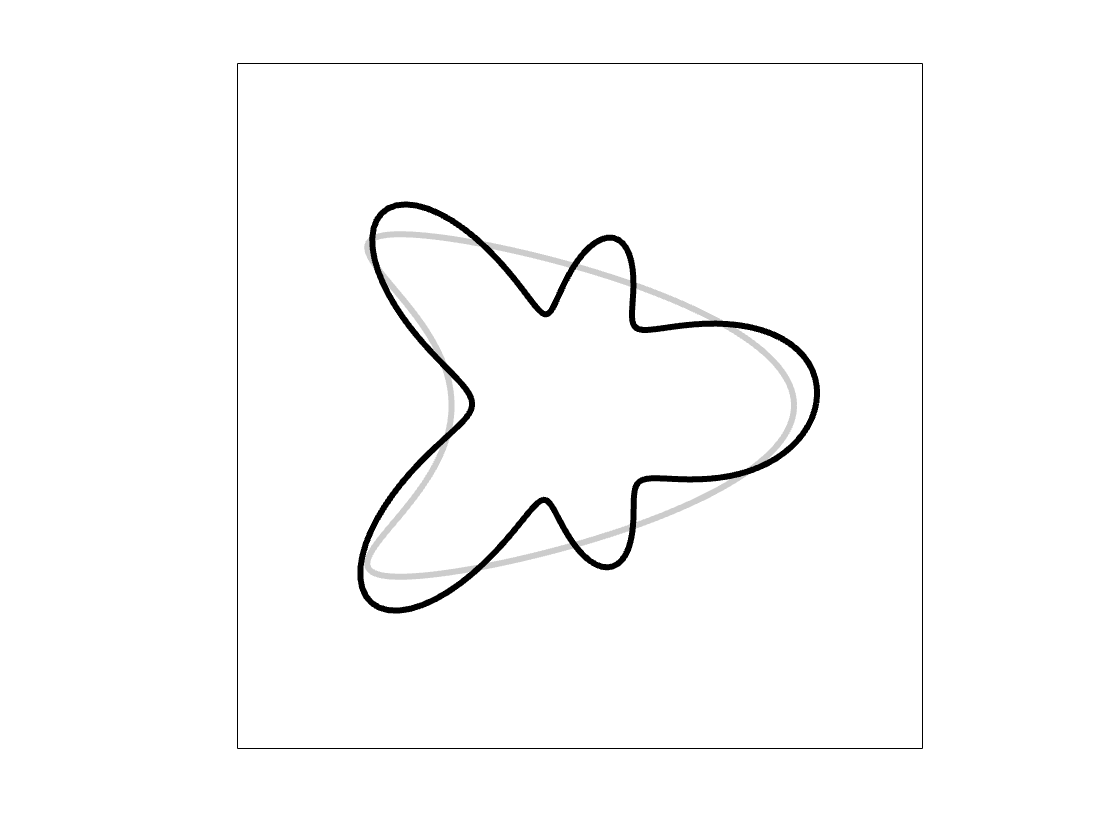}
\end{minipage}
\begin{minipage}{0.28\linewidth}
\includegraphics[width=0.98\linewidth,trim = 70 30 50 15, clip]{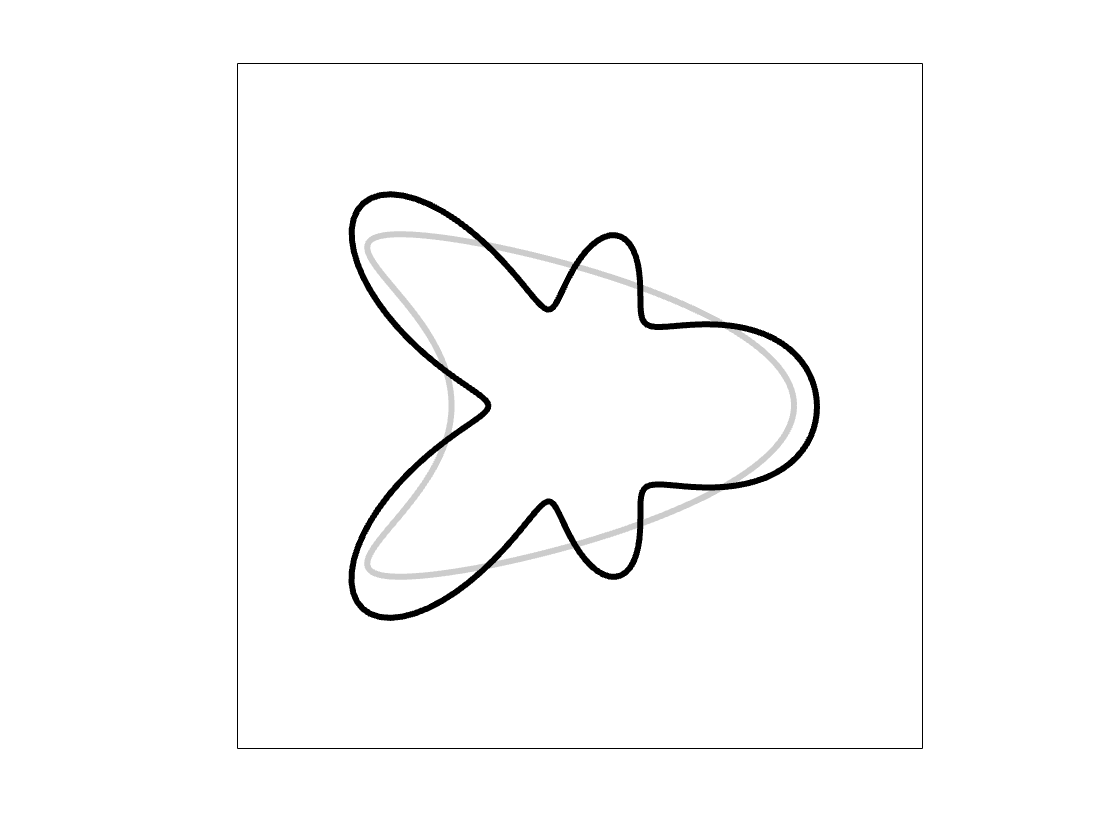}
\end{minipage}
\begin{minipage}{0.28\linewidth}
\includegraphics[width=0.98\linewidth,trim = 70 30 50 15, clip]{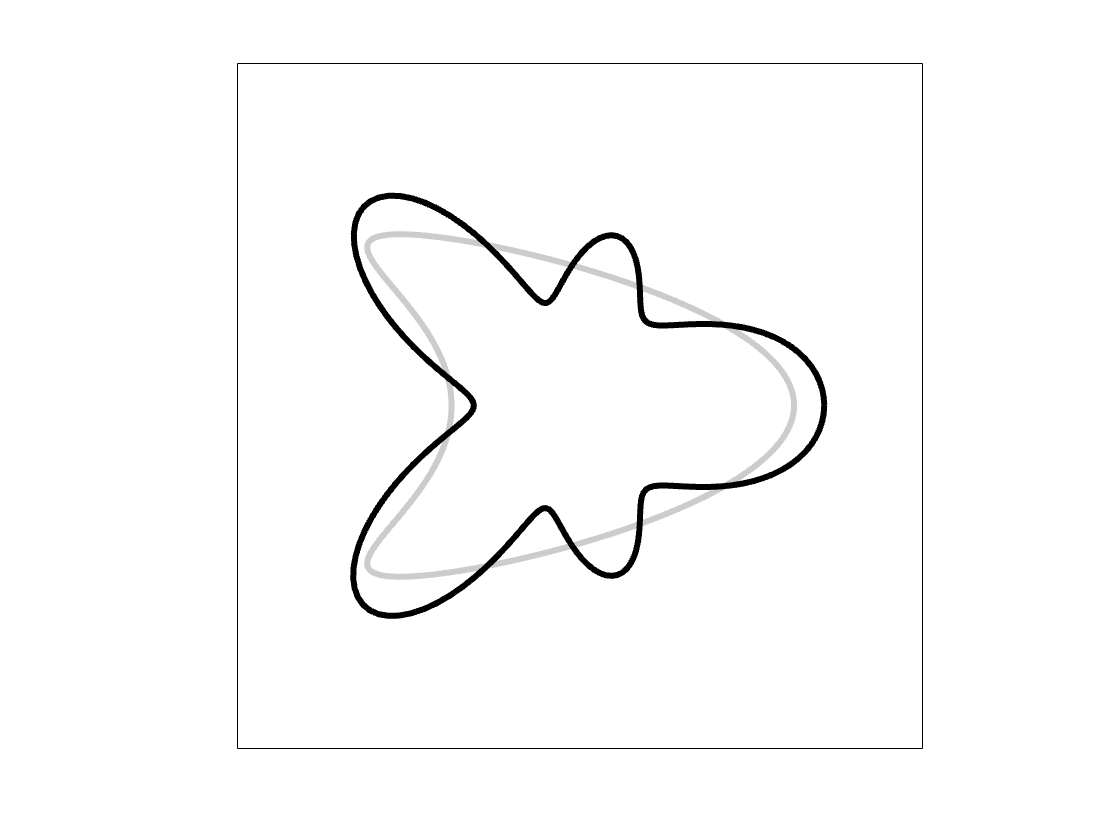}
\end{minipage}
\end{subfigure}
\begin{subfigure}{\linewidth}
\centering
\vskip -3mm
\captionsetup{justification=centering}
\begin{minipage}{0.1\linewidth}
\subcaption*{}
\end{minipage}
\begin{minipage}{0.28\linewidth}
\subcaption*{${a}_0^{\text{rec}}=0.8814 + 0.8146\rmi$\\${\gamma}^{\text{rec}}=1.0105$}
\end{minipage}
\begin{minipage}{0.28\linewidth}
\subcaption*{${a}_0^{\text{rec}}=0.9179 + 0.7986\rmi$\\${\gamma}^{\text{rec}}=0.9964$}
\end{minipage}
\begin{minipage}{0.28\linewidth}
\subcaption*{${a}_0^{\text{rec}}=0.8967 + 0.8001\rmi$\\${\gamma}^{\text{rec}}=1.0113$}
\end{minipage}
\end{subfigure}
\vskip -2mm
\caption{Recovery of a kite-shaped inclusion. Recovery with the EMTs up to higher-order terms provides finer details for the inclusion.}
\label{figure:Kite}
\end{figure}

\begin{figure}[ht!]
\begin{subfigure}{\linewidth}
\centering
\captionsetup{justification=centering}
\begin{minipage}{0.22\linewidth}
\includegraphics[width=1\linewidth,trim=70 30 50 15, clip]{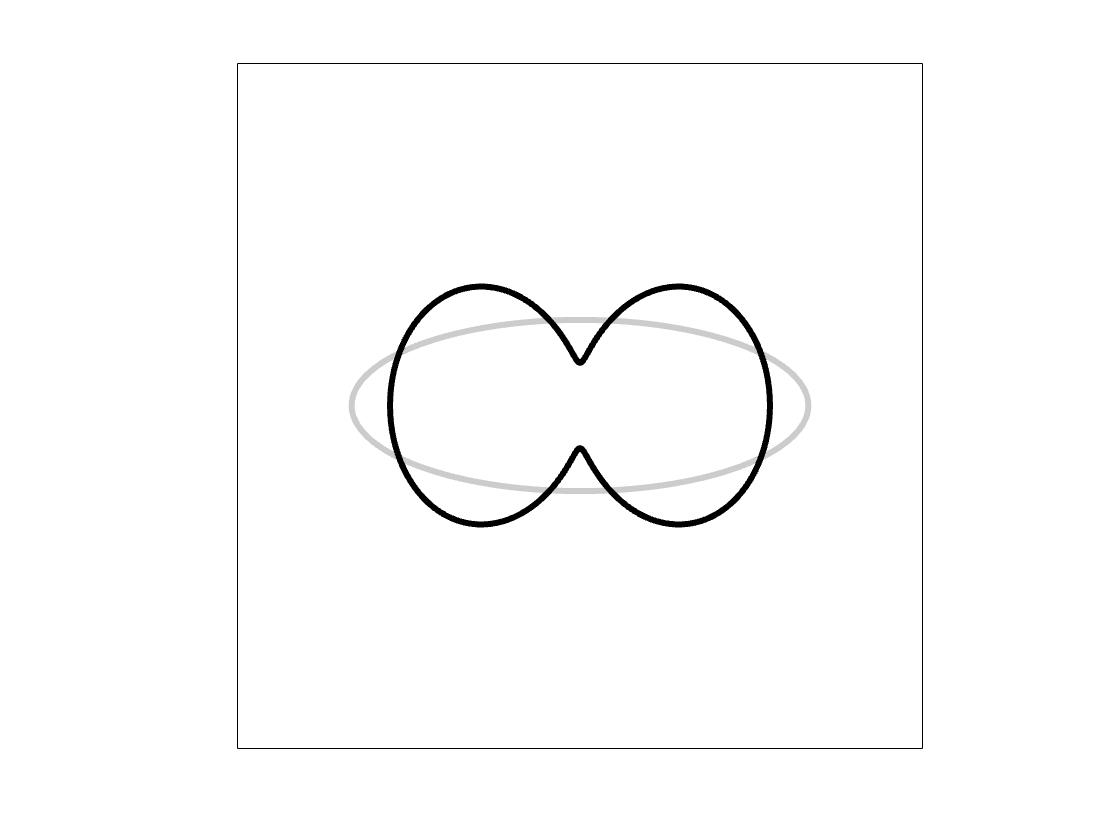}
\end{minipage}
\hskip -5.45mm
\begin{minipage}{0.22\linewidth}
\includegraphics[width=1\linewidth,trim=70 30 50 15, clip]{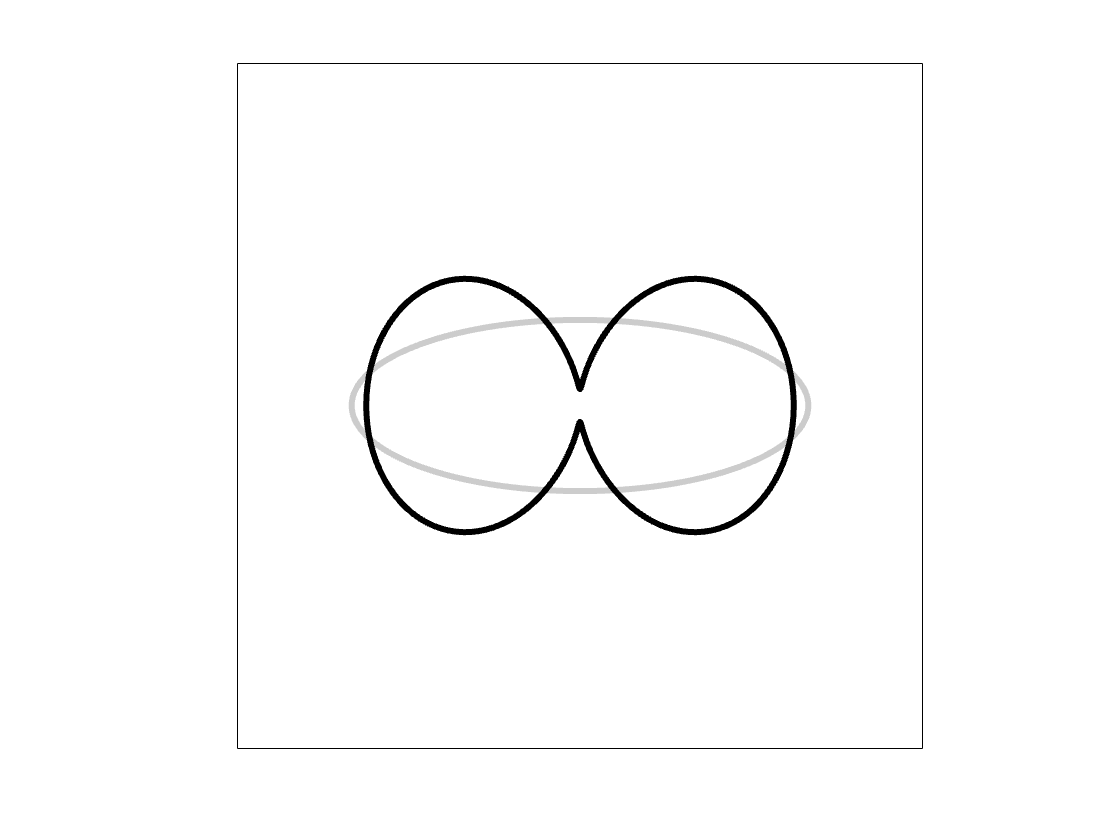}
\end{minipage}
\hskip -5.45mm
\begin{minipage}{0.22\linewidth}
\includegraphics[width=1\linewidth,trim=70 30 50 15, clip]{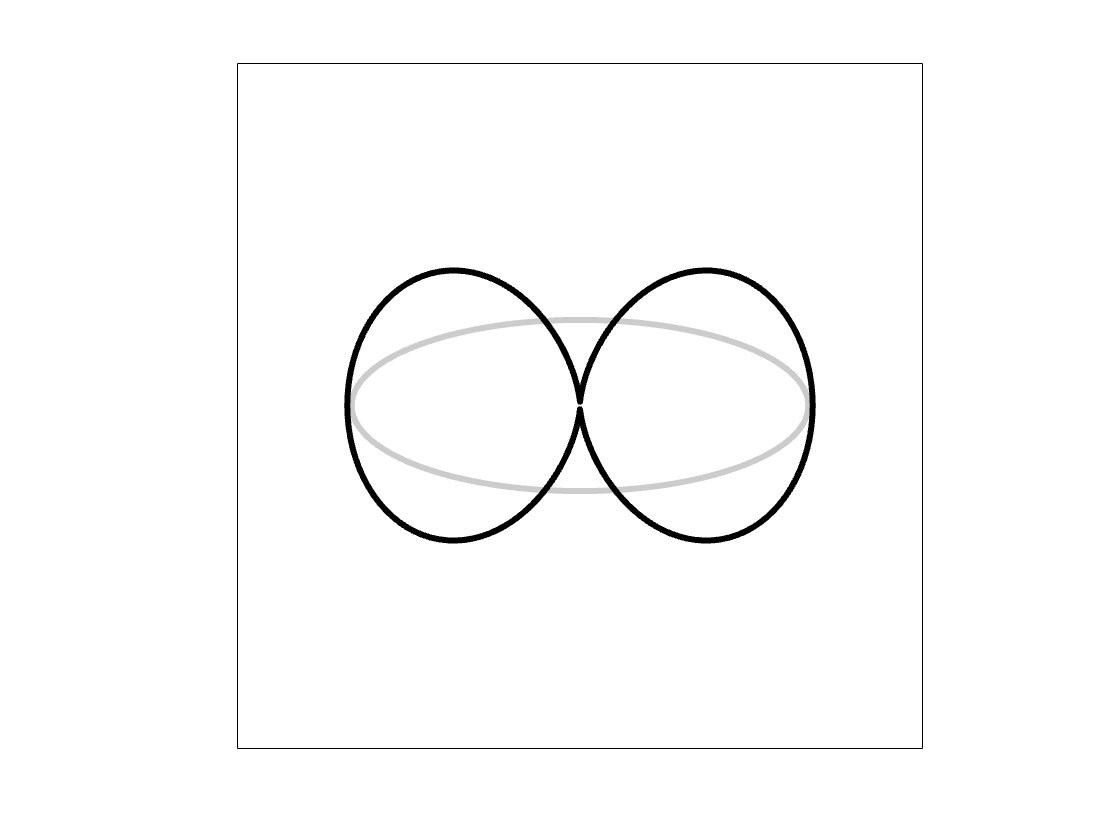}
\end{minipage}
\hskip -5.45mm
\begin{minipage}{0.22\linewidth}
\includegraphics[width=1\linewidth,trim = 70 30 50 15, clip]{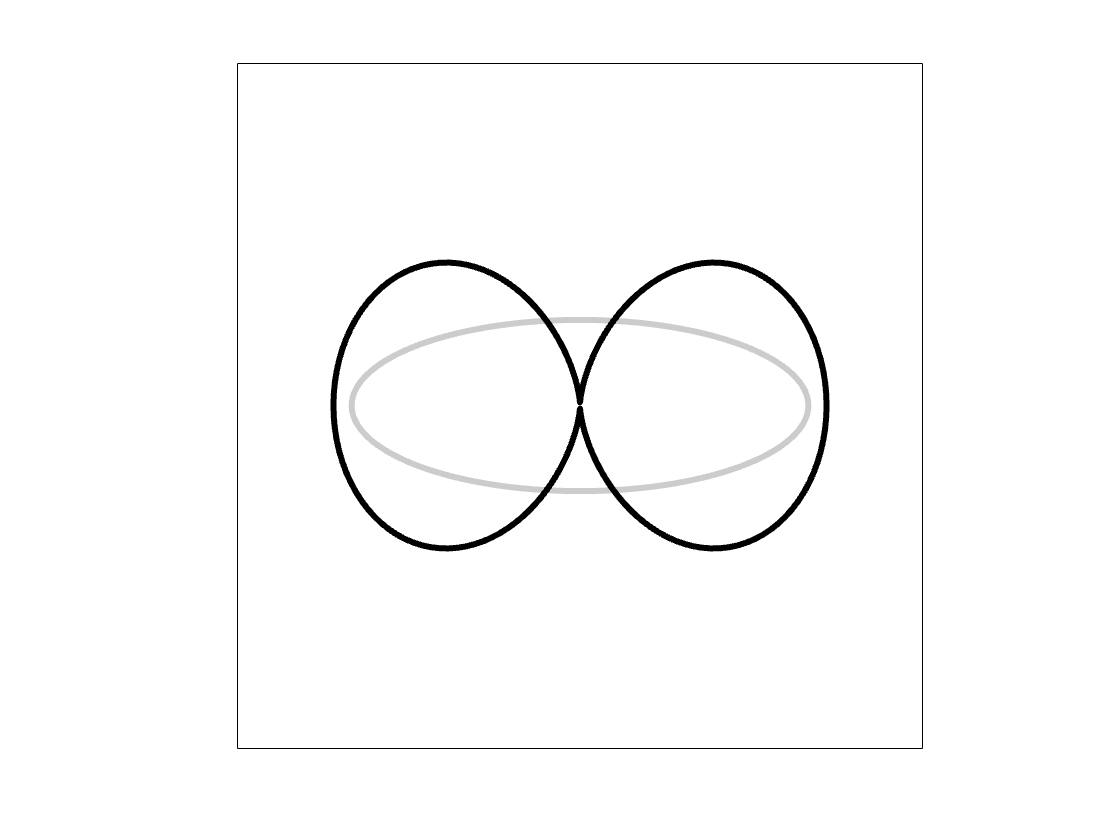}
\end{minipage}
\hskip -5.45mm
\begin{minipage}{0.22\linewidth}
\includegraphics[width=1\linewidth,trim = 70 30 50 15, clip]{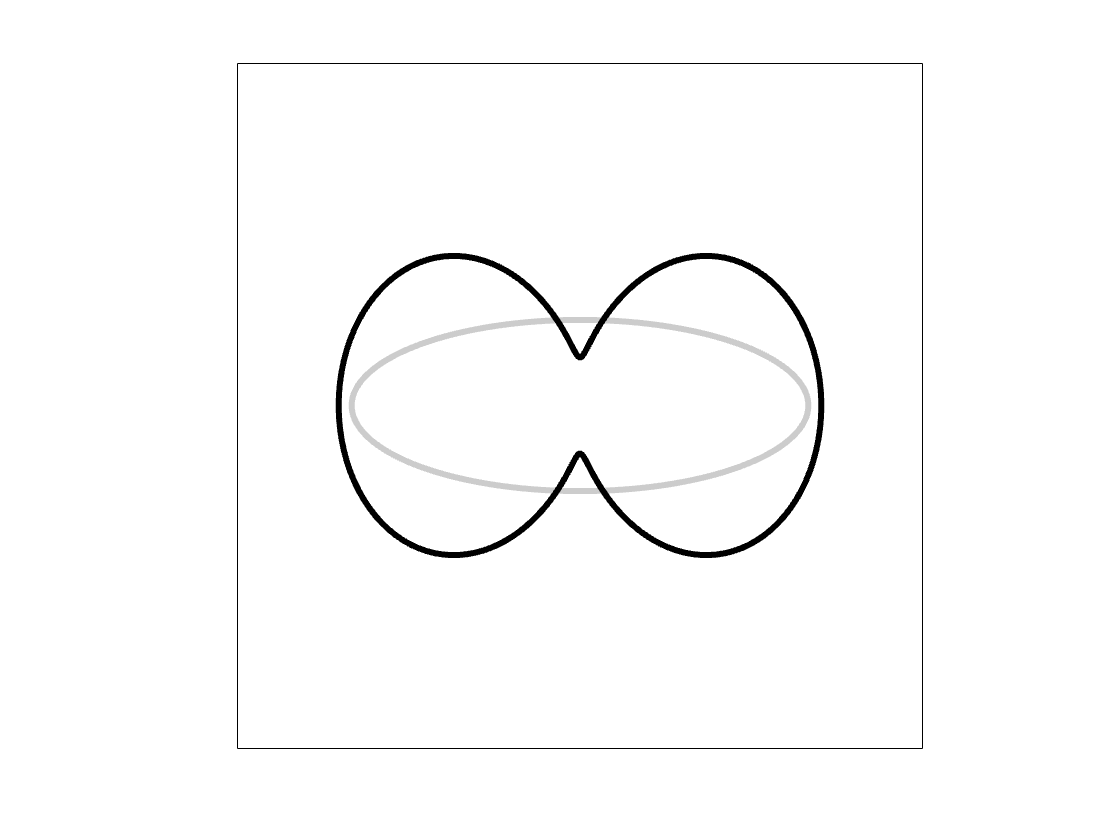}
\end{minipage}
\end{subfigure}
\begin{subfigure}{\linewidth}
\centering
\vskip -3mm
\captionsetup{justification=centering}
\begin{minipage}{0.22\linewidth}
\subcaption*{$\tilde{\lambda}=8 $\\ $\tilde{\mu}=5 $}
\end{minipage}
\hskip -5.45mm
\begin{minipage}{0.22\linewidth}
\subcaption*{$\tilde{\lambda}=1.8 $\\ $\tilde{\mu}=1.5 $}
\end{minipage}
\hskip -5.45mm
\begin{minipage}{0.22\linewidth}
\subcaption*{$\tilde{\lambda}=1 $\\ $\tilde{\mu}=0.8 $}
\end{minipage}
\hskip -5.45mm
\begin{minipage}{0.22\linewidth}
\subcaption*{$\tilde{\lambda}=0.6 $\\ $\tilde{\mu}=0.4 $}
\end{minipage}
\hskip -5.45mm
\begin{minipage}{0.22\linewidth}
\subcaption*{$\tilde{\lambda}=0.1 $\\ $\tilde{\mu}=0.08 $}
\end{minipage}
\end{subfigure}
\vskip -2mm
\caption{Recovery of an ellipse-shaped inclusion with $\text{Ord}= 4$ without noise.  
The reconstruction performance varies depending on $(\tlambda,\tmu)$ even when $\Om$ has the same shape. 
}
\label{figure:Ellipse}
\end{figure}

\section{Conclusion}\label{sec:conclusion}
We investigated the inverse problem of reconstructing a planar isotropic elastic inclusion embedded in a homogeneous background medium from the elastic moment tensors (EMTs). Utilizing the layer potential approach and the complex formulation for the plane elastostatic problem, we successfully obtained explicit formulas for the shape derivative of the contracted EMTs by considering the inclusion as a perturbed disk. We then derived explicit expressions in terms of the EMTs for the Fourier coefficients of the shape deformation function, which describes the shape of the inclusion. Our approach provides an analytic non-iterative reconstruction algorithm for the elastic inclusion. 

It is worth noting that the complex-variable formulation can also be applied to the elastostatic inclusion problem involving an elliptic inclusion \cite{Ando:2018:SPN,Mattei:2021:EAS} (see also \cite{Ammari:2027:BIM,Choi:2021:ASR}). In our future work, we plan to generalize \cref{thm:main_result} to address the analytic recovery of a perturbed ellipse. Additional questions remain to be investigated, such as developing analytic recovery formulas to simultaneously determine the geometry and material parameters $\tlambda,\tmu$ of the inclusion $\Om$, reducing the boundary regularity assumption on the inclusion, and deriving error estimates for the proposed reconstruction method.


%
\bibliographystyle{plain}

\end{document}